\newlength{\dhatheight}
\newcommand{\bea}{\begin{eqnarray*}}
\newcommand{\eea}{\end{eqnarray*}}
\newcommand{\beq}{\begin{equation}}
\newcommand{\eeq}{\end{equation}}
\newcommand{\bfomega}{\mbox{\boldmath $\omega$ \unboldmath} \hskip -0.05 true in}
\newcommand{\bom}{\bfomega}
 \newcommand{\D}{\mathrm{d}}
 \newcommand{\ii}{\mathrm{i}}
\newtheorem{theorem}{Theorem}[section]
\newtheorem{lemma}[theorem]{Lemma}
\theoremstyle{definition}
\newtheorem{example}[theorem]{Example}
\newtheorem{proposition}[theorem]{Proposition}
\newtheorem{corollary}[theorem]{Corollary}
\theoremstyle{remark}
\newtheorem{remark}[theorem]{Remark}
\numberwithin{equation}{section}
\begin{document}

\title[FFBT/iFFBT of convolutions using FFT]{Asymptotically steerable Finite Fourier-Bessel Transforms and Closure under Convolution}

\author[A. Ghaani Farashahi]{Arash Ghaani Farashahi$^{*,1}$}
\address{$^1$Department of Mechanical Engineering, National University of Singapore,
9 Engineering Drive 1, Singapore 117575.}
\email{arash.ghaanifarashahi@nus.edu.sg; ghaanifarashahi@outlook.com}

\author[G.S. Chirikjian]{Gregory S. Chirikjian$^{2}$}
\address{$^2$Department of Mechanical Engineering, National University of Singapore,
9 Engineering Drive 1, Singapore 117575.}
\email{mpegre@nus.edu.sg}

\subjclass[2020]{Primary 42C10 65R10 (33C10 42C05 65D20 65D30 65T40 65T50).}

\keywords{Convolution on disks,  Fourier-Bessel transform,  Fourier-Bessel series,  finite Fourier-Bessel transform,  inverse finite Fourier-Bessel transform, finite Fourier transform, steerable,  asymptotically steerable.}
\thanks{Emails: arash.ghaanifarashahi@nus.edu.sg (Arash Ghaani Farashahi),  mpegre@nus.edu.sg (Gregory S. Chirikjian)}
\date{\today}

\begin{abstract}
This paper develops a constructive numerical scheme for Fourier-Bessel approximations on disks compatible with convolutions supported on disks.  We address accurate finite Fourier-Bessel transforms (FFBT) and inverse  finite Fourier-Bessel transforms (iFFBT) of functions on disks using the discrete Fourier Transform (DFT) on Cartesian grids.  Whereas the DFT and its fast implementation (FFT) are ubiquitous and are powerful for computing convolutions, they are not exactly steerable under rotations.  In contrast, Fourier-Bessel expansions are steerable, but lose both this property and the preservation of band limits under convolution. This work captures the best features of both as the band limit is allowed to increase. The convergence/error analysis and asymptotic steerability of FFBT/ iFFBT are investigated.  Conditions are established for the FFBT to converge to the Fourier-Bessel coefficient and for the iFFBT to uniformly approximate the Fourier-Bessel partial sums.  The matrix form of the finite transforms is discussed.  The implementation of the discrete method to compute numerical approximation of convolutions of compactly supported functions on disks is considered as well.  
\end{abstract}

\maketitle
\tableofcontents

\section{{\bf Introduction}}

Constructive approximation techniques for planar and spatial functions/signals with certain rotational symmetry and convolution (resp.  correlation) of such functions have been utilized  
in different applications such as applied mechanics \cite{wang},  image processing \cite{AR.MS.JA.AB, MarMicSinger, ZZ.YS.AS},  cryo-electron microscopy (cryo-EM) \cite{YC.NG.PAP.TW, EN.SHS,  ZZ.AS},    macromolecular docking \cite{19Venkatraman2009, 19Venkatraman2009a},  template matching techniques in digital image analysis for pattern recognition \cite{Kim.Kim},  Minkowski sum visualization \cite{Kavraki},  error propagation \cite{PI5},  computer vision \cite{GAP.YSB.DAK.BGM} and convolutional neural networks (CNNs) \cite{XC.QQ.RC.GS, TSC.MW, MW.FAH.MS}.

For the purpose of constructive approximation of 2D functions/signals with certain rotational symmetry,  it is canonically suggested to investigate the functions/signals in polar coordinates.  It would be analytically and practically attractive if the function can be decomposed into building blocks consisting of discrete orthonormal basis elements (which can also be viewed as basic wave-like patterns) having separable form in polar coordinates.  This implies that the associated decomposition is made
up of radial and angular decompositions and hence are steerable \cite{MarMicSinger}.

One of the natural suggestions for such a (discrete) polar orthonormal basis for the Hilbert space of square-integrable functions on disks,  is the Fourier-Bessel basis \cite{SLT.IA, AGHF.GSC.M, G.N.W, Zet.An}.   The Fourier-Bessel basis (normalized polar harmonics) elements on disks are  constructed by utilizing normalized Bessel functions in radial direction and normalized trigonometric monomials for rotational direction.  

Suppose $(m,n)\in\mathbb{Z}\times\mathbb{N}$.  The normalized polar harmonics $\Psi_{m,n}:\mathbb{R}^2\to\mathbb{C}$ is defined by
\begin{equation*}
\Psi_{m,n}(r,\theta):=\left\{\begin{array}{lll}\displaystyle\frac{e^{\ii m\theta}J_m(z_{m,n}r)}{\sqrt{\pi}|J_{m+1}(z_{m,n})|}, &\ {\rm if}\ r\le 1\\
0 &\ {\rm if}\ r>1
\end{array}\right.,
\end{equation*} 
where $J_m(x)$ is the Bessel function of the first kind of order $m$ and $\{z_{m,n}:n\in\mathbb{N}\}$ is the ordered set of all positive zeros of $J_m(x)$.  
Assume that $\mathbb{B}$ is the closed unit disk in $\mathbb{R}^2$.  Then every square integrable function $f:\mathbb{R}^2\to\mathbb{C}$ supported in $\mathbb{B}$,  can be approximated in the $L^2$-sense using Fourier-Bessel partial sums (also known as inverse Fourier-Bessel transforms) 
\[
S_{M,N}(f):=\sum_{m=-M}^M\sum_{n=1}^NC_{m,n}(f)\Psi_{m,n},
\]
with large enough $M,N\in\mathbb{N}$,  where the Fourier-Bessel coefficient/transform $C_{m,n}(f)$ are given by  
\[
C_{m,n}(f):=\int_0^{2\pi}\int_0^1f(r,\theta)\overline{\Psi_{m,n}(r,\theta)}r\D r \D\theta.
\]
In this case,  the steerability property reads as 
\[
C_{m,n}(R_\phi f)=e^{\ii m\phi}C_{m,n}(f),
\]
where $R_\phi f$ is the rotation of $f$ through the angle $\phi\in[0,2\pi)$, that is $R_\phi f(r,\theta):=f(r,\theta+\phi)$.

The analytical aspects of Fourier-Bessel transforms for functions on disks is discussed in \cite{AGHF.GSC.AA}.  It is shown that the Fourier-Bessel transform $C_{m,n}(f)$ of a square integrable function $f:\mathbb{R}^2\to\mathbb{C}$  supported in $\mathbb{B}$ with absolutely convergent 2D Fourier series satisfies the following closed form in terms of discrete sampling of 2D Fourier integrals
\begin{equation}\label{CmnXiSum}
C_{m,n}(f)=\sum_{\mathbf{k}\in\mathbb{Z}^2}\mathbf{c}(\mathbf{k};m,n)\widehat{f}(\mathbf{k}),
\end{equation}
where $\mathbf{c}(\mathbf{k};m,n)$ are unique complex numbers independent of $f$ and 
\[
\widehat{f}(\mathbf{k})=\int_0^{2\pi}\int_0^1f(r,\theta)e^{-\pi\ii r(k_1\cos\theta+k_2\sin\theta)}r\D r\D\theta,\hspace{1cm}{\rm for}\ \mathbf{k}=(k_1,k_2)^T\in\mathbb{Z}^2.
\]
There are several canonical advantageous for the constructive closed form (\ref{CmnXiSum}).  To begin with,  the infinite double sum in the right hand side of (\ref{CmnXiSum}) reduces into a finite sum for band-limited functions.  In addition,  due to the convolution property of 2D Fourier integrals, it can be naturally applied for convolution of arbitrary functions supported on disks.  

This paper investigates numerical aspects of Fourier-Bessel transforms and Fourier-Bessel series on disks using a computational scheme involving a proportional truncation of the series on the right hand side of (\ref{CmnXiSum}) and finite sampling of 2D Fourier integrals on disks.  We then utilize the discussed  computational strategy as a numerical approximation of convolutions supported on disks in terms of numerical Fourier integrals of convolved functions.   

Section 2 is devoted to review of general notation,  classical Fourier analysis, and Fourier-Bessel series on disks.  Section 3 introduces the notion of finite Fourier-Bessel transforms (FFBT) of any order $K\in\mathbb{N}$,  denoted by $C_{m,n}^{K}(f)$,  as an accurate numerical approximation of Fourier-Bessel transforms $C_{m,n}(f)$.  The basic  properties of the finite Fourier-Bessel transforms are then considered. 
In addition, convergence analysis of finite Fourier-Bessel transforms and asymptotic steerability is performed.  It is proved that if a function $f\in\mathcal{C}^2(\mathbb{R}^2)$ is supported in the open unit disk $\mathbb{B}^\circ$,  the finite Fourier-Bessel transform of order $K\in\mathbb{N}$ satisfies 
\[
\left|C_{m,n}(f)-C_{m,n}^{K}(f)\right|=\mathcal{O}(1/K),
\]
if $K\ge K_{m,n}:=\lceil\frac{z_{m,n}}{\pi}\rceil$.
Furthermore,  in this case the finite Fourier-Bessel transforms satisfy the following asymptotic steerability  
\[
\left|C_{m,n}^{K}(R_\phi f)-e^{\ii m\phi}C_{m,n}^{K}(f)\right|=\mathcal{O}(1/K),
\]
for every $\phi\in [0,2\pi)$.  Next, we study the inverse finite Fourier-Bessel transforms (iFFBT) of order $K\in\mathbb{N}$,  defined by 
\[
S_{M,N}^{K}(f):=\sum_{m=-M}^M\sum_{n=1}^NC_{m,n}^{K}(f)\Psi_{m,n}.
\]
We discuss conditions in which $S_{M,N}^{K}(f)$ can be considered as computable and accurate approximations for partial Fourier-Bessel sums $S_{M,N}(f)$.  It is shown that the inverse finite Fourier-Bessel transform of order $K\in\mathbb{N}$ of the partial Fourier-Bessel sum $S_{M,N}(f)$ of every $f\in\mathcal{C}^2(\mathbb{R}^2)$ supported in $\mathbb{B}^\circ$,  satisfies 
\[
\left\|S_{M,N}(f)-S_{M,N}^{K}(f)\right\|_\infty=\mathcal{O}(1/K),
\]
if $K\ge K[M,N]:=\max\{K_{m,n}:0\le|m|\le M,1\le n\le N\}$.
Section 4 presents matrix forms for FFBT/iFFBT and then illustrates some numerical experiments for FFBT/iFFBT of functions on disks in MATLAB.  It is shown that FFBT/iFFBT satisfy closed matrix forms in terms of discrete Fourier transforms,  matrix multiplications and the matrix trace functional which can be implemented using fast Fourier (resp.   matrix multiplication) algorithms.  

Section 5 develops the computational scheme for approximating convolution of  functions on disks.  We discuss a unified version of finite Fourier-Bessel transforms and the associated inverse finite Fourier-Bessel transforms for convolution of compactly supported functions on disks.  Assume that functions $f,g:\mathbb{R}^2\to\mathbb{C}$ are supported in $\frac{1}{2}\mathbb{B}$. We introduce the unified finite Fourier-Bessel transform of order $K\in\mathbb{N}$ associated to $f,g$,  also denoted by $C_{m,n}^{K}[f,g]$,  as an accurate numerical approximation of $C_{m,n}^{K}(f\ast g)$.  It is shown that
if $f,g\in\mathcal{C}^1(\mathbb{R}^2)$ then 
\[
\left|C_{m,n}^{K}[f,g]-C_{m,n}^{K}(f\ast g)\right|=\mathcal{O}(1/K),
\] 
In addition, it is proven that the unified inverse finite Fourier-Bessel transforms for convolutions,  given by 
\[
S_{M,N}^{K}[f,g]:=\sum_{m=-M}^M\sum_{n=1}^NC_{m,n}^{K}[f,g]\Psi_{m,n},
\]
satisfies 
\[
\left\|S_{M,N}^{K}[f,g]-S_{M,N}(f\ast g)\right\|_\infty=\mathcal{O}(1/K),
\]
if $K\ge K[M,N]$.  The main advantage of the unified finite Fourier-Bessel transforms of convolutions is that it can be applied to convolution of any compactly supported functions $f,g$ without any samplings from the values of the convolution integral $f\ast g$.  Section 6 addresses closed matrix forms of the unified FFBT/iFFBT for convolutions together with  some numerical convolution experiments in MATLAB.  
\section{{\bf Preliminaries and Notation}}
Throughout this section we fix notations and review some preliminaries.
\subsection{General notation} Suppose $a>0$, $q\in\mathbb{N}$ and $p\in (0,\infty]$.   
\begin{enumerate}
\item For sequences $(a_n),(b_n)\in\mathbb{C}^\mathbb{N}$ and $N\in\mathbb{N}$,  we write $a_n=\mathcal{O}(b_n)$ for $n\ge N$, if there exists $M>0$ such that $|a_n|\le M|b_n|$ for $n\ge N$.
\item If $\mathbf{x}\in\mathbb{R}^2$ and $p<\infty$,  $\|\mathbf{x}\|_p^p$ denotes $\sum_{i=1}^2|x_i|^p$
\item If $\mathbf{x}\in\mathbb{R}^2$,  $\|\mathbf{x}\|_{\infty}$ denotes $\max\{|x_i|:1\le i\le 2\}$
\item If $\mathbf{x},\mathbf{y}\in\mathbb{R}^2$,  $\langle\mathbf{x},\mathbf{y}\rangle:=x_1y_1+x_2y_2$ denotes the dot product (standard inner product) of $\mathbb{R}^2$
\item $\Omega_a$ denotes the rectangle $[-a,a]\times [-a,a]$ in $\mathbb{R}^2$ and $\Omega$ denotes $\Omega_1$.
\item $\mathbb{B}_a=\{\mathbf{x}\in\mathbb{R}^2:\|\mathbf{x}\|_2\le a\}$ denotes the closed disk of radius $a$ in $\mathbb{R}^2$ and $\mathbb{B}$ denotes $\mathbb{B}_1$.
\item $\mathbb{B}^\circ_a=\{\mathbf{x}\in\mathbb{R}^2:\|\mathbf{x}\|_2<a\}$ denotes the open disk of radius $a$ in $\mathbb{R}^2$ and $\mathbb{B}^\circ$ denotes $\mathbb{B}_1^\circ$.
\item $u,v,\ldots$ denote complex-valued functions on $[-1,1]$
\item $U,V,\ldots$ denote complex-valued functions on $\Omega$
\item $f,g,\ldots$ denote functions on $\mathbb{R}^2$.
\item For a function $f:\mathbb{R}^2\to\mathbb{C}$, $\mathrm{supp}(f):=\{\mathbf{x}\in\mathbb{R}^2:f(\mathbf{x})\not=0\}$ denotes the support of $f$
\item $\mathcal{V}_a$ denotes the set of all square integrable functions on $\mathbb{R}^2$ supported in $\mathbb{B}_a$, i.e.  $\mathrm{supp}(f)\subseteq\mathbb{B}_a$ and $\mathcal{V}$ denotes $\mathcal{V}_1$.
\item For $f\in\mathcal{V}_a$,  $\|f\|_\infty$ denotes $\sup_{\mathbf{x}}|f(\mathbf{x})|$.
\item $\mathcal{C}^q(X)$ denotes the set of all $f:X\subseteq\mathbb{R}^2\to\mathbb{C}$ such that all partial derivatives of order $\le q$ exist and are continuous.
\item For $f\in\mathcal{C}^1(X)$,  $\nabla f:X\subseteq\mathbb{R}^2\to\mathbb{C}$ denotes the gradient of $f$.
\item For $X\subseteq\mathbb{R}^2$ and $1\le p<\infty$,  $L^p(X)$ denotes the space of measurable functions $f:X\to\mathbb{C}$ for which the $p$-th power of the absolute value is Lebesgue integrable.
\end{enumerate}
\subsection{Fourier analysis}
The Lebesgue integration on $\mathbb{R}^2$ satisfies the following polar decomposition 
\begin{equation*}\label{int.polar.dec}
\int_{\mathbb{R}^2}f(\mathbf{x})\D\mathbf{x}=\int_{0}^{2\pi}\int_0^\infty f(r,\theta)r\D r\D\theta,
\end{equation*}
for every $f\in L^1(\mathbb{R}^2)$, see Theorem 2.49 of \cite{Foll.R}.  If $f$ is supported in $\mathbb{B}_a$, then 
\begin{equation}\label{int.polar.dec.a}
\int_{\mathbb{R}^2}f(\mathbf{x})\D\mathbf{x}=\int_{0}^{2\pi}\int_0^a f(r,\theta)r\D r\D\theta.
\end{equation}

For $f,g\in L^1(\mathbb{R}^2)$,  the convolution integral $f\ast g\in L^1(\mathbb{R}^2)$ at $\mathbf{x}\in\mathbb{R}^2$,  is defined by \cite{Foll.R}
\begin{equation}\label{conv.Rd}
(f\ast g)(\mathbf{x}):=\int_{\mathbb{R}^2}f(\mathbf{y})g(\mathbf{x}-\mathbf{y})\D\mathbf{y}.
\end{equation}
Also,  if $a,b>0$ and $f,g\in L^1(\mathbb{R}^2)$ with $\mathrm{supp}(f)\subseteq\mathbb{B}_{a}$ (resp.  $\mathrm{supp}(g)\subseteq\mathbb{B}_{b}$) then $\mathrm{supp}(f\ast g)\subseteq\mathbb{B}_{a+b}$.

The Fourier integral of $f\in L^1(\mathbb{R}^2)$ at $\bom\in\mathbb{R}^2$, is defined by
\begin{equation}\label{Ff}
\widehat{f}(\bom):=\int_{\mathbb{R}^2}f(\mathbf{x})e^{-\ii\pi\langle\bom,\mathbf{x}\rangle}\D\mathbf{x}.
\end{equation}
If $f,g\in L^1(\mathbb{R}^2)$ and $\bom\in\mathbb{R}^2$, the Fourier integral (\ref{Ff}) satisfies the following convolution property 
\begin{equation}\label{Ff*g}
\widehat{f\ast g}(\bom)=\widehat{f}(\bom)\widehat{g}(\bom).
\end{equation}

Suppose $L\in\mathbb{N}$ and $\mathbb{C}^{L\times L}$ is the linear space of all square matrices with complex entries of size $L$. The discrete Fourier transform (DFT) of $\mathbf{A}\in\mathbb{C}^{L\times L}$, is defined as the matrix $\widehat{\mathbf{A}}\in\mathbb{C}^{L\times L}$ given by 
\begin{equation}\label{DFT}
\widehat{\mathbf{A}}(l,\ell):=\sum_{k=1}^L\sum_{j=1}^L\mathbf{A}(k,j)e^{-2\pi\ii(k-1)(l-1)/L}e^{-2\pi\ii(j-1)(\ell-1)/L},
\end{equation}
for every $1\le l,\ell\le L$.  
It is well known that the DFT can be computed efficiently using the Fast Fourier Transform (FFT) algorithm, see \cite{Av.Co.Do.El.Is, Fe.Ku.Po, St} and references therein.

\subsection{Fourier-Bessel series on disks}
Suppose $m\in\mathbb{Z}$ and $J_m(x)$ is the Bessel function of the first kind of order $m$.  Let $\{z_{m,n}:n\in\mathbb{N}\}$ be the set of all positive zeros of $J_m(x)$ 
such that $z_{m,1}<z_{m,2}<\cdots<z_{m,n}<\cdots$, i.e. $J_m(z_{m,n})=0$ for every $n\in\mathbb{N}$. 
For $n\in\mathbb{N}$,  the normalized Bessel function $\mathcal{J}_{m,n}$ on $[0,1]$ is given by
\begin{equation}\label{Jnm.a}
\mathcal{J}_{m,n}(r):=\frac{\sqrt{2}}{|J_{m+1}(z_{m,n})|}J_m(z_{m,n}r),\hspace{1cm}\ {\rm for}\ r\in[0,1].
\end{equation}
Then 
\[
\int_0^1\mathcal{J}_{m,n}(r)\mathcal{J}_{m,n'}(r)r\D r=\delta_{n,n'}
\]
Therefore,  any $\nu\in L^2([0,1],r\D r)$, 
satisfies the following constructive $L^2$-expansion  \cite{SLT.IA,  G.N.W, Zet.An}
\begin{equation}\label{mFBS}
\nu=\sum_{n=1}^\infty \langle \nu,\mathcal{J}_{m,n}\rangle\mathcal{J}_{m,n},
\hspace{1cm}{\rm where}\hspace{0.5cm} \langle \nu,\mathcal{J}_{m,n}\rangle:=\int_0^1\nu(s)\mathcal{J}_{m,n}(s)s\D s,
\end{equation}
for every $n\in\mathbb{N}$. 
It is worthwhile to mentioned that the series (\ref{mFBS}) converges in the mean; that is,
\[
\lim_{N\to\infty}\int_0^1\left|\nu(r)-\sum_{n=1}^N  \langle\nu,\mathcal{J}_{m,n}\rangle\mathcal{J}_{m,n}(r)\right|^2r\D r=0.
\]
\begin{figure}[H]
\centering
\includegraphics[keepaspectratio=true,width=\textwidth, height=0.2\textheight]{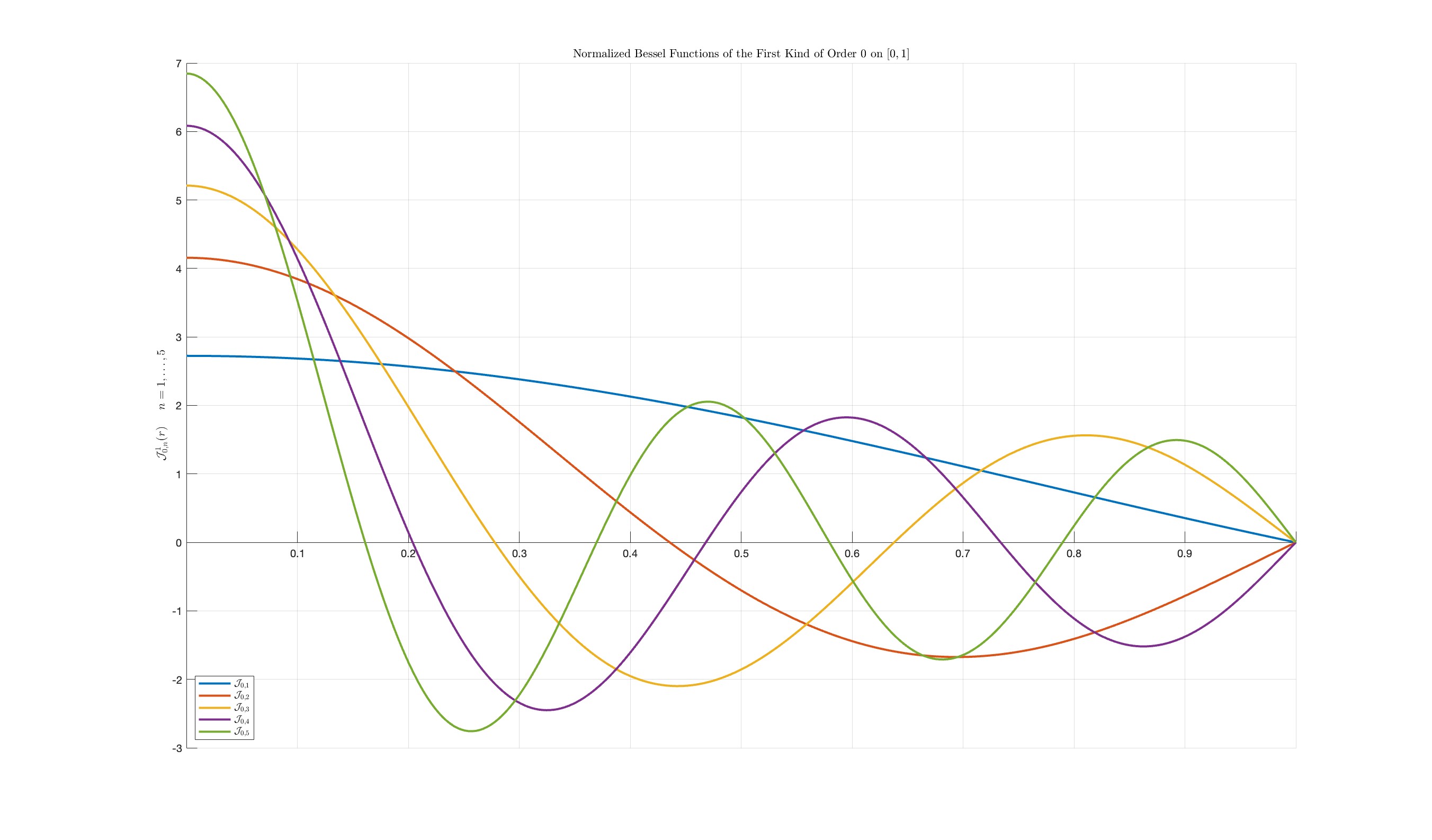}
\caption{Normalized Bessel functions of the first kind of order $0$ on $[0,1]$. }
\label{fig:J05}
\end{figure}
\begin{figure}[H]
\centering
\includegraphics[keepaspectratio=true,width=\textwidth, height=0.2\textheight]{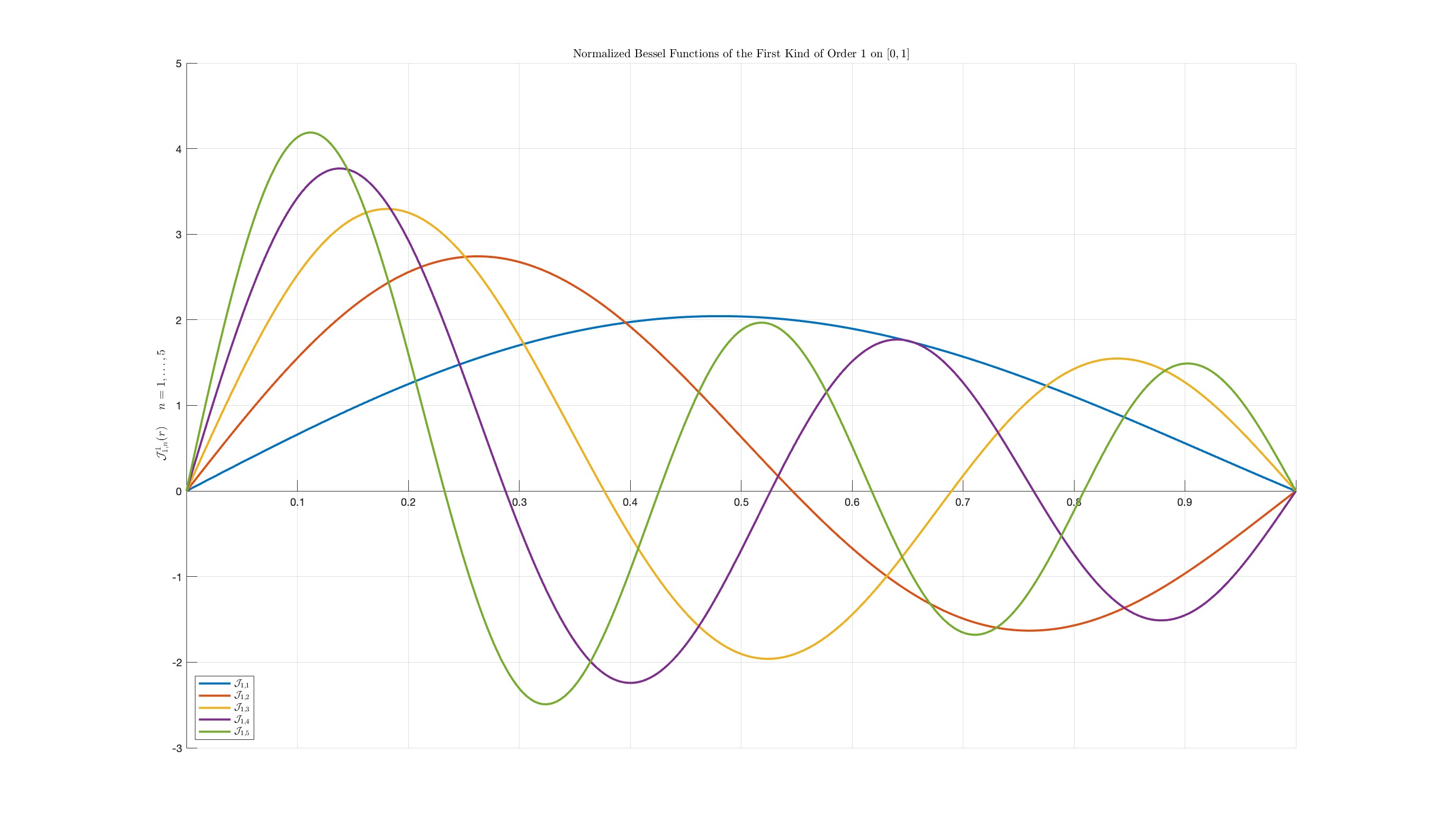}
\caption{Normalized Bessel functions of the first kind of order $1$ on $[0,1]$. }
\label{fig:J15}
\end{figure}
The normalized polar harmonics on $\mathbb{R}^2$ are defined by
\begin{equation}\label{2D.B.Fr.gp}
\Psi_{m,n}(r\mathbf{u}_\theta)=\Psi_{m,n}(r,\theta):=\left\{\begin{array}{lll}\frac{1}{\sqrt{2\pi}}\exp(\ii m\theta)\mathcal{J}_{m,n}(r)&\ {\rm if}\ r\le 1\\
0 &\ {\rm if}\ r>1
\end{array}\right.,
\end{equation}
where the normalized radial function $\mathcal{J}_{m,n}(r)$ is defined by (\ref{Jnm.a}).   We then have 
\begin{equation*}
\int_0^{2\pi}\int_0^1\Psi_{m,n}(r,\theta)\overline{\Psi_{m',n'}(r,\theta)}r\D r\D\theta=\delta_{n,n'}\delta_{m,m'}.
\end{equation*}
Every $f\in\mathcal{V}$ can be expanded in $L^2$-sense with respect to basis elements $\Psi_{m,n}$ given in (\ref{2D.B.Fr.gp}) by
\begin{equation}\label{FBSp}
f=\sum_{m=-\infty}^\infty\sum_{n=1}^{\infty} C_{m,n}(f)\Psi_{m,n},
\end{equation}
where
\begin{equation}\label{FBT}
C_{m,n}(f):=\int_0^{2\pi}\int_0^1f(r,\theta)\overline{\Psi_{m,n}(r,\theta)}r\D r \D\theta.
\end{equation}
The convergence of the series (\ref{FBSp}) is interpreted in the mean; that is,
\[
\lim_{M\to\infty}\lim_{N\to\infty}\int_0^{2\pi}\int_0^1\left|f(r,\theta)-S_{M,N}(f)(r,\theta)\right|^2r\D r\D\theta=0,
\]
where 
\begin{equation}\label{SMN}
S_{M,N}(f)(r,\theta):=\sum_{m=-M}^M\sum_{n=1}^NC_{m,n}(f)\Psi_{m,n}(r,\theta),
\end{equation}
for every $r\ge 0$ and $\theta\in[0,2\pi)$.
Suppose that $\mathcal{A}$ is the subspace of $\mathcal{V}$ given by 
\[
\mathcal{A}:=\left\{f\in\mathcal{V}:\sum_{\mathbf{k}\in\mathbb{Z}^2}\left|\widehat{f}(\mathbf{k})\right|<\infty\right\},
\] 
where (\ref{int.polar.dec.a}) and (\ref{Ff}) imply that 
\begin{equation}\label{C.ZBVC.alt}
\widehat{f}(\mathbf{k})=\int_0^{2\pi}\int_0^1f(r,\theta)e^{-\pi\ii r(k_1\cos\theta+k_2\sin\theta)}r\D r\D\theta,
\end{equation}
for every $\mathbf{k}:=(k_1,k_2)^T\in\mathbb{Z}^2$. 
Theorem 3.3 of \cite{AGHF.GSC.AA} showed that if $f\in\mathcal{A}$ and $(m,n)\in\mathbb{Z}\times\mathbb{N}$ then 
\begin{equation}\label{C.ZBVC}
C_{m,n}(f)=\sum_{\mathbf{k}\in\mathbb{Z}^2}\mathbf{c}(\mathbf{k};m,n)\widehat{f}(\mathbf{k}),
\end{equation}
where 
\begin{equation}\label{cakmn}
\mathbf{c}(\mathbf{k};m,n):=\left\{\begin{array}{lll}
\sqrt{\pi}(-1)^{n}\ii^{m}z_{m,n}\frac{J_{m}(\pi\|\mathbf{k}\|_2)e^{-\ii m\Phi(\mathbf{k})}}{2(\pi^2\|\mathbf{k}\|_2^2-z_{m,n}^2)} & {\rm if}\ m\ge 0\\
(-1)^m\sqrt{\pi}(-1)^{n}\ii^{m}z_{m,n}\frac{J_{m}(\pi\|\mathbf{k}\|_2)e^{-\ii m\Phi(\mathbf{k})}}{2(\pi^2\|\mathbf{k}\|_2^2-z_{m,n}^2)} & {\rm if}\ m< 0\\
\end{array}\right.,
\end{equation}
for every $\mathbf{k}=(k_1,k_2)^T\in\mathbb{Z}^2$, where $\Phi(\mathbf{k}):=\mathrm{atan2}(k_2,k_1)$.
\section{\bf{Finite Fourier-Bessel Transforms}}
Throughout,  we introduce the notion of finite Fourier-Bessel transforms (FFBT),  as numerical approximations of Fourier-Bessel coefficients given by (\ref{FBT}). Then using the notion of FFBT, we develop inverse finite Fourier-Bessel transform (iFFBT) as numerical approximations of Fourier-Bessel partial sums given by (\ref{SMN}). 
We also study basic properties of these finite  transforms.  The motivation for introducing these finite transforms originated from the closed form (\ref{C.ZBVC}).  

To begin with, we discuss a numerical integration scheme for Fourier integrals of the form (\ref{C.ZBVC.alt}) using finite Fourier transforms on rectangles which can be reformulated in closed form in terms of DFT and hence can be implemented by FFT,  as discussed in Section \ref{MatFFT}.  

\subsection{Finite Fourier transforms on disks}\label{FFtD}

This part studies a unified computational approach to compute/approximate the Fourier integrals of the form $\widehat{f}(\mathbf{k})$, where $f:\mathbb{R}^2\to\mathbb{C}$ is a function supported in $\mathbb{B}$ and $\mathbf{k}\in\mathbb{Z}^2$,  using uniform square sampling on disks and finite Fourier transforms which they can be implemented via fast Fourier algorithms,  see Section \ref{MatFFT}. 

Suppose $f\in\mathcal{V}$,  $L\in\mathbb{N}$ and $\bom:=(\omega_1,\omega_2)^T\in\mathbb{R}^2$.
The finite Fourier transform of $f$ at $\bom$ on the uniform square grid of size $L\times L$ is defined by   
\begin{equation}\label{Xi.kL}
\widehat{f}(\bom;L):=\delta^2\sum_{i=1}^L\sum_{j=1}^Lf(x_i,x_j)e^{-\pi\ii (\omega_1x_i+\omega_2x_j)},
\end{equation}
where $x_i:=-1+(i-1)\delta$ with $\delta:=\frac{2}{L}$, for $1\le i\le L$.  
Since $f$ is supported in $\mathbb{B}$,  we get
\begin{equation*}
\widehat{f}(\bom;L)=\delta^2\sum_{(i,j)\in\mathbb{B}^L}f(x_i,x_j)e^{-\pi\ii(\omega_1x_i+\omega_2x_j)},
\end{equation*}
for every $\bom:=(\omega_1,\omega_2)\in\mathbb{R}^2$, where $\mathbb{N}_L:=\{i\in\mathbb{N}:1\le i\le L\}$ and 
\[
\mathbb{B}^L:=\left\{(i,j)\in\mathbb{N}_L\times\mathbb{N}_L:(x_i,x_j)\in\mathbb{B}\right\}.
\]
\begin{figure}[H]
\centering
\includegraphics[keepaspectratio=true,width=\textwidth, height=0.3\textheight]{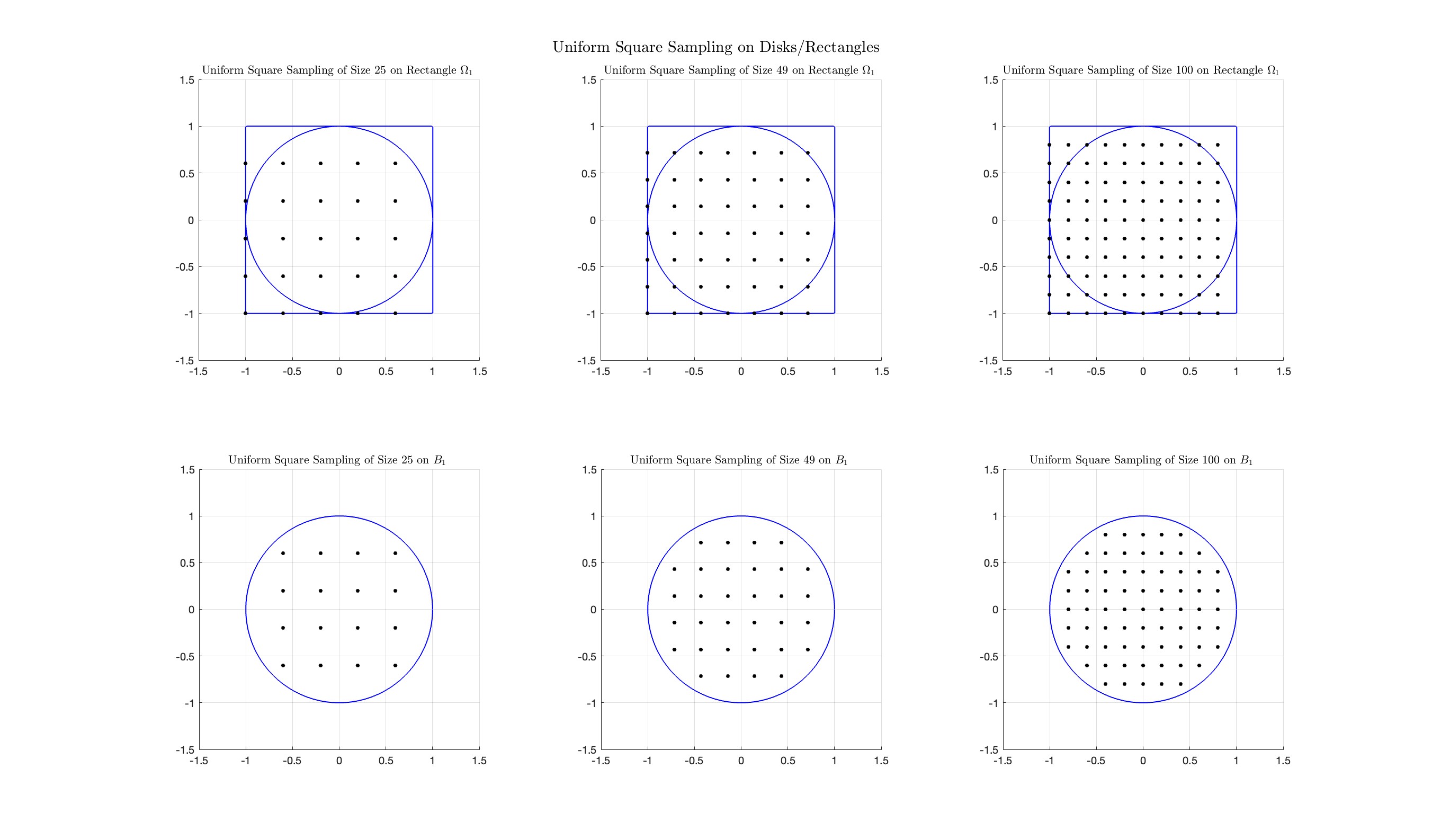}
\caption{Sampling points $(x_i,x_j)$ with $(i,j)\in \mathbb{B}^L$ plotted in the square $\Omega$ and disk $\mathbb{B}$ for (left) $L = 5$,  (middle) $L=7$, and (right) $L =10$.}
\label{fig:SamPoints}
\end{figure}

The following proposition presents some symmetry properties of finite Fourier transforms on disks. 

\begin{proposition}\label{FFTmainSym}
{\it Let $f\in\mathcal{V}$ and $L\in\mathbb{N}$.
Suppose that $x_j:=-1+(j-1)\delta$ with $\delta:=\frac{2}{L}$ and $1\le j\le L$.  Assume $\bom=(\omega_1,\omega_2)^T\in\mathbb{R}^2$. Then
\begin{enumerate}
\item $\overline{\widehat{f}(\bom;L)}=\widehat{\overline{f}}(-\bom;L)$.
\item $\overline{\widehat{f}(\bom;L)}=\widehat{f}(-\bom;L)$, if $f$ is real-valued.
\end{enumerate}
}\end{proposition}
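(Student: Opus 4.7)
The plan is to prove both parts by direct computation from the defining formula (\ref{Xi.kL}), using only elementary properties of complex conjugation.

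For part (1), I would start from the definition
\[
\widehat{f}(\bom;L) = \delta^2 \sum_{i=1}^L \sum_{j=1}^L f(x_i,x_j)\, e^{-\pi\ii(\omega_1 x_i + \omega_2 x_j)},
\]
and take complex conjugates. Since $\delta$ is real and conjugation distributes over finite sums and products, the conjugate of each summand is $\overline{f(x_i,x_j)}\, e^{\pi\ii(\omega_1 x_i + \omega_2 x_j)}$. Rewriting the exponential as $e^{-\pi\ii((-\omega_1)x_i + (-\omega_2)x_j)}$ and recognizing that $\overline{f(x_i,x_j)} = \overline{f}(x_i,x_j)$, the resulting expression is exactly $\widehat{\overline{f}}(-\bom;L)$.

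For part (2), I would simply observe that if $f$ is real-valued then $\overline{f} = f$ pointwise, so the identity in (1) specializes to $\overline{\widehat{f}(\bom;L)} = \widehat{f}(-\bom;L)$.

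There is no genuine obstacle here: the proposition is a one-line manipulation of the definition, and the only subtlety worth flagging is to be explicit that $x_i, x_j$ and $\delta$ are real, so that complex conjugation acts only on $f$ and on the complex exponential. I would present the argument in two short displayed calculations rather than as separate lemmas.
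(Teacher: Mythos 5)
Your proposal is correct and follows essentially the same route as the paper: conjugate the defining sum (\ref{Xi.kL}), note that $\delta$, $x_i$, $x_j$ are real so conjugation acts only on $f$ and the exponential, and absorb the sign change into $-\bom$; part (2) then follows by setting $\overline{f}=f$. The paper's proof is the same one-line computation (with the minor slip that it drops the factor $\pi$ in the exponent, which your version correctly retains).
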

\begin{proof}
(1) Invoking (\ref{Xi.kL}), we have 
\begin{align*}
\overline{\widehat{f}(\bom;L)}=\delta^2\sum_{i=1}^L\sum_{j=1}^L\overline{f(x_i,x_j)}e^{\ii (\omega_1x_i+\omega_2x_j)}=\widehat{\overline{f}}(-\bom;L).
\end{align*}
(2) follows from (1).\\
\end{proof}

The following proposition investigates convergence rate of finite Fourier transforms of the form $\widehat{f}(\mathbf{k};L)$ as an accurate numerical approximation of the Fourier integrals $\widehat{f}(\mathbf{k})$. To this end, we discussed some related technicalities in Appendix \ref{Apx1}. 

\begin{proposition}\label{mainXIb}
{\it Let $f\in \mathcal{C}^1(\mathbb{R}^2)$ be a function supported in  $\mathbb{B}^\circ$.  Suppose $K\in\mathbb{N}$ and $\mathbf{k}\in\mathbb{Z}^2$.  Then  
\[
\left|\widehat{f}(\mathbf{k})-\widehat{f}(\mathbf{k};2K+1)\right|\le\frac{96\|\nabla f\|_\infty}{\pi K},\hspace{1cm}{\rm if}\ \|\mathbf{k}\|_\infty\le K.
\]
}\end{proposition}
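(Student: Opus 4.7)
The plan is to reduce the 2D quadrature error to two 1D errors via an iterated decomposition, and then handle each 1D problem by a Poisson-summation-type analysis. First, introduce the partial Fourier transform in the first variable, $h(y) := \int_{-1}^1 f(x,y)\, e^{-\pi\ii k_1 x}\,\D x$, and its Riemann sum $h_L(y) := \delta\sum_{i=1}^L f(x_i, y)\, e^{-\pi\ii k_1 x_i}$, where $L = 2K+1$ and $\delta = 2/L$. Then $\widehat f(\mathbf k) = \int_{-1}^1 h(y)\, e^{-\pi\ii k_2 y}\,\D y$ and $\widehat f(\mathbf k; L) = \delta \sum_{j=1}^L h_L(x_j)\, e^{-\pi\ii k_2 x_j}$. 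Adding and subtracting $\delta \sum_j h(x_j)\, e^{-\pi\ii k_2 x_j}$ splits the total error as $E_1 + E_2$, where $E_1$ is the 1D quadrature error for the integrand $y\mapsto h(y)\, e^{-\pi\ii k_2 y}$, and $E_2 = \delta \sum_j [h(x_j) - h_L(x_j)]\, e^{-\pi\ii k_2 x_j}$ is a discrete average of 1D quadrature errors for $x\mapsto f(x, x_j)\, e^{-\pi\ii k_1 x}$.

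Second, the heart of the proof is a 1D claim (presumably the content of Appendix \ref{Apx1}): for $u\in\mathcal C^1([-1,1])$ with $u(\pm 1)=0$ and $|k|\le K$,
\[
\left|\int_{-1}^1 u(t)\, e^{-\pi\ii k t}\,\D t - \delta\sum_{i=1}^L u(x_i)\, e^{-\pi\ii k x_i}\right| \;\le\; \frac{C\,\|u'\|_\infty}{K},
\]
for a universal constant $C$. The derivation writes the left-endpoint error as $\int_{-1}^1 g'(t)\, \tilde w(t)\,\D t$, where $g(t) = u(t)\, e^{-\pi\ii k t}$ and $\tilde w$ is the centred $\delta$-periodic sawtooth (mean zero, $\|\tilde w\|_\infty = \delta/2$). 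Split $g' = u'\, e^{-\pi\ii k t} - \pi\ii k\, u\, e^{-\pi\ii k t}$. The $u'$-contribution is bounded by $\delta\|u'\|_\infty$. For the $\pi\ii k u$-contribution, expand $\tilde w = \sum_{n\neq 0}\tfrac{\delta}{2\pi\ii n}\, e^{\ii \pi n L t}$ in Fourier series; the piece becomes $-\pi\ii k\sum_{n\neq 0}\tfrac{\delta}{2\pi\ii n}\, \widehat u(k-nL)$. Integration by parts (using $u(\pm 1)=0$) gives $|\widehat u(\omega)|\le 2\|u'\|_\infty/(\pi|\omega|)$, and the hypothesis $|k|\le K=(L-1)/2$ forces $|k-nL|\ge L|n|/2$ for $|n|\ge 1$. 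Consequently $|n|^{-1}|k-nL|^{-1}\le 2/(L n^2)$, the series converges geometrically, and the whole contribution is bounded by $O(K\delta/L\cdot\|u'\|_\infty)=O(\|u'\|_\infty/K)$.

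Third, assembling: since $f$ is supported in $\mathbb B^\circ$, both $h$ and each slice $f(\cdot,x_j)$ vanish at $\pm 1$; moreover $\|h'\|_\infty\le 2\|\nabla f\|_\infty$ (the $y$-slice of the support has $x$-width at most $2$) and $\|\partial_1 f(\cdot,x_j)\|_\infty\le \|\nabla f\|_\infty$. Applying the 1D lemma once to bound $|E_1|$ and once per $j$ to bound each summand of $|E_2|$, and using $\delta L = 2$, yields $|E_1|, |E_2| = O(\|\nabla f\|_\infty/K)$. Tracking the constants (and inflating to absorb numerical constants from the Fourier series and from $\sum n^{-2}$) produces the explicit prefactor $96/\pi$ stated in the proposition.

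The main obstacle is the 1D lemma. A naive mean-value estimate for the left-endpoint rule gives an error of order $\delta\,\|g'\|_\infty$, but $\|g'\|_\infty\asymp |k|\|u\|_\infty$ is of size $K$ when $|k|\sim K$, producing only an $O(1)$ bound. The essential idea is to exploit the rapid oscillation of $e^{-\pi\ii k t}$ through the Fourier expansion of the sawtooth weight, turning the question into an aliasing estimate: the Nyquist-like hypothesis $|k|\le K$ keeps every aliased frequency $k - nL$ at distance at least $K+1$ from the origin, which is precisely what makes the Poisson tail converge fast enough to save the critical factor of $1/K$.
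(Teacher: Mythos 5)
Your argument is correct and reaches the stated bound (indeed with room to spare in the constant), but it proves the key one-dimensional estimate by a genuinely different mechanism than the paper. The outer structure coincides: the paper also reduces the $2$D error to two $1$D quadrature errors by inserting the middle term $\delta\sum_j h(x_j)e^{-\pi\ii k_2 x_j}$ (this is exactly the proof of Proposition \ref{mainT}, via $u_{k_2}(x):=\widehat{U_x}[k_2]$), and the outer reduction $\widehat f(\mathbf k)=4\widehat U[\mathbf k]$ is Proposition \ref{mainXIb} itself. Where you diverge is the $1$D lemma. The paper (Proposition \ref{mainP}) invokes Jackson's theorem to produce a trigonometric polynomial $p$ of degree $K$ with $\|u-p\|_\infty\le 6\|u'\|_\infty/(\pi K)$, and combines it with the exactness of the $(2K+1)$-point equispaced rule on $\mathcal T_K[-1,1]$ (Lemma \ref{TrigL}, i.e.\ discrete orthogonality), giving $|\widehat u[k]-\widehat u[k;L]|\le 2\|u-p\|_\infty$. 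You instead write the left-endpoint error as $\int g'\tilde w$, expand the centred sawtooth in its own Fourier series, and control the aliased frequencies $k-nL$ via integration by parts and the Nyquist-type bound $|k-nL|\ge L|n|/2$. Both are sound. The paper's route is shorter and upgrades for free to $O(K^{-q})$ when $u\in\mathcal C^q$ (Jackson again); yours is self-contained, makes the aliasing mechanism behind the hypothesis $\|\mathbf k\|_\infty\le K$ explicit, correctly diagnoses why the naive $O(\delta\|g'\|_\infty)$ estimate only gives $O(1)$, and tracking your constants yields roughly $8\|\nabla f\|_\infty/K$, better than $96/(\pi K)$. Two small points to make explicit in a write-up: you need $u(\pm1)=0$ (not merely $u(-1)=u(1)$ as in the paper's lemma) so that the boundary terms in your integration by parts vanish and $g$ is periodic --- this holds here because $f$ is supported in $\mathbb B^\circ$; and the bound $\|h'\|_\infty\le 2\|\nabla f\|_\infty$ should be justified by differentiating under the integral sign over the interval $[-1,1]$ of length $2$, which you do.
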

\begin{proof}
Let $U$ be the restriction of $f$ to the rectangle $\Omega$.  Then $U$ is periodic and $U\in \mathcal{C}^1(\Omega)$.  Then 
\[
\widehat{f}(\mathbf{k})=4\widehat{U}[\mathbf{k}], \hspace{1cm}
\widehat{f}(\mathbf{k};L)=4\widehat{U}[\mathbf{k};2K+1].
\]
Therefore,  using Proposition \ref{mainT},  we get 
\begin{align*}
\left|\widehat{f}(\mathbf{k})-\widehat{f}(\mathbf{k};2K+1)\right|
=4\left|\widehat{U}[\mathbf{k}]-\widehat{U}[\mathbf{k};2K+1]\right|\le\frac{96\|\nabla f\|_\infty}{\pi K}.
\end{align*}
\end{proof}
\begin{remark}
Proposition \ref{mainXIb} reads as the following accuracy result. 
Suppose $f\in \mathcal{C}^1(\mathbb{R}^2)$ is supported in $\mathbb{B}^\circ$.  Let $\mathbf{k}\in\mathbb{Z}^2$ and $\epsilon>0$.  Assume $K:=\lceil \max\{\epsilon^{-1}\pi^{-1}96\|\nabla f\|_\infty,\|\mathbf{k}\|_\infty\}\rceil$ and $L:=2K+1$. Then $\widehat{f}(\mathbf{k};L)$ approximates the Fourier integral $\widehat{f}(\mathbf{k})$ with the absolute error less than $\epsilon$. 
\end{remark}

\subsection{Finite Fourier-Bessel transforms (FFBT)}\label{FFBT}
Here we introduce the notion of finite Fourier-Bessel transforms for functions supported on disks and will discuss fundamental properties of  finite Fourier-Bessel transforms.   It is shown that finite Fourier-Bessel transforms are converging to actual value of Fourier-Bessel coefficients.  The matrix form of FFBT discussed in \ref{MatFFBT} as well.

Let $f\in\mathcal{V}$ and $(m,n)\in\mathbb{Z}\times\mathbb{N}$.
The finite Fourier-Bessel transform ($\mathbf{FFBT}$) of order $K\in\mathbb{N}$ of $C_{m,n}(f)$ is given by 
\begin{equation}\label{DFBT-FFT}
C_{m,n}^K(f):=\sum_{\|\mathbf{k}\|_{\infty}\le K}\mathbf{c}(\mathbf{k};m,n)\widehat{f}(\mathbf{k};2K+1),
\end{equation}
with $\mathbf{c}(\mathbf{k};m,n)$ given by (\ref{cakmn}) for $\mathbf{k}:=(k_1,k_2)^T\in\mathbb{Z}^2$, and 
\[
\widehat{f}(\mathbf{k};2K+1):=\delta^2\sum_{i=1}^{2K+1}\sum_{j=1}^{2K+1}f(x_i,x_j)e^{-\pi\ii (k_1x_i+k_2x_j)},
\]
where $x_j:=-1+(j-1)\delta$ with $\delta:=\frac{2}{2K+1}$ for $1\le j\le 2K+1$.  

\begin{proposition}\label{DFBT-FFTmain}
{\it Let $f\in\mathcal{V}$ and $K,n\in\mathbb{N}$.
Suppose that $m\in\mathbb{Z}$ with $m\ge 0$. Then
\begin{enumerate}
\item $C_{-m,n}^{K}(f)=(-1)^m\overline{C_{m,n}^{K}(\overline{f})}$.
\item  $C_{-m,n}^{K}(f)=(-1)^m\overline{C_{m,n}^{K}(f)}$ if $f$ is real-valued.
\end{enumerate}
}\end{proposition}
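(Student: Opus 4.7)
The plan is to reduce both parts to two symmetry identities for the coefficients $\mathbf{c}(\mathbf{k};m,n)$ and then feed the conjugation symmetry of the finite Fourier transform established in Proposition \ref{FFTmainSym}(1) into the defining sum (\ref{DFBT-FFT}). Since $C_{m,n}^K(f)$ is, by construction, a finite $\mathbb{C}$-linear combination of the samples $\widehat{f}(\mathbf{k};2K+1)$ with coefficients depending only on $(m,n,\mathbf{k})$, no analytic estimates are needed; the whole argument is algebraic bookkeeping.

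First I would derive two index symmetries of $\mathbf{c}(\mathbf{k};m,n)$ directly from (\ref{cakmn}). The key identity is
\[
\overline{\mathbf{c}(\mathbf{k};m,n)}=\mathbf{c}(\mathbf{k};-m,n),
\]
which follows by comparing the two branches of (\ref{cakmn}) and invoking the integer-order relations $J_{-m}(x)=(-1)^m J_m(x)$, $z_{-m,n}=z_{m,n}$, together with $\overline{\ii^m}=\ii^{-m}$ and $\overline{e^{-\ii m\Phi(\mathbf{k})}}=e^{\ii m\Phi(\mathbf{k})}$: the $(-1)^m$ introduced by $J_{-m}$ cancels the $(-1)^m$ sitting in front of the $m<0$ branch, leaving the complex conjugate of the $m\ge 0$ branch. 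The second identity,
\[
\mathbf{c}(-\mathbf{k};m,n)=(-1)^m\,\mathbf{c}(\mathbf{k};m,n),
\]
comes for free from $\|-\mathbf{k}\|_2=\|\mathbf{k}\|_2$ and $\Phi(-\mathbf{k})\equiv \Phi(\mathbf{k})+\pi\pmod{2\pi}$, which give $e^{-\ii m\Phi(-\mathbf{k})}=(-1)^m e^{-\ii m\Phi(\mathbf{k})}$ for every integer $m$.

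For (1), I would then expand
\[
\overline{C_{m,n}^K(\overline{f})}=\sum_{\|\mathbf{k}\|_\infty\le K}\overline{\mathbf{c}(\mathbf{k};m,n)}\;\overline{\widehat{\overline{f}}(\mathbf{k};2K+1)},
\]
apply Proposition \ref{FFTmainSym}(1) to replace $\overline{\widehat{\overline{f}}(\mathbf{k};2K+1)}$ by $\widehat{f}(-\mathbf{k};2K+1)$, and reindex by $\mathbf{k}\mapsto -\mathbf{k}$ (permissible since the cube $\{\|\mathbf{k}\|_\infty\le K\}$ is symmetric). Plugging in the two coefficient identities of Step 1 converts the summand into $(-1)^m\,\mathbf{c}(\mathbf{k};-m,n)\,\widehat{f}(\mathbf{k};2K+1)$, so the whole sum becomes $(-1)^m\,C_{-m,n}^K(f)$; multiplying through by $(-1)^m$ and using $(-1)^{2m}=1$ yields the claim. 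Part (2) is then immediate by specializing to $\overline{f}=f$.

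The only delicate point is Step 1: one must keep careful track of the sign/phase prefactors $(-1)^m$, $\ii^m$, $J_{\pm m}$ and verify that the piecewise definition in (\ref{cakmn}) is arranged precisely so that the $m\ge 0$ and $m<0$ branches are complex conjugates of each other. Once this verification is in hand, the rest of the proof is a mechanical rearrangement of a finite sum.
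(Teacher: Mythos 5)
Your proposal is correct and follows essentially the same route as the paper: the paper's own proof combines exactly the two coefficient symmetries you derive (its Lemma \ref{cakmnSym}) with Proposition \ref{FFTmainSym}(1) and the reindexing $\mathbf{k}\mapsto-\mathbf{k}$ over the symmetric cube. The only cosmetic difference is that the paper starts from $\overline{C_{-m,n}^{K}(f)}$ and works toward $(-1)^m C_{m,n}^{K}(\overline{f})$, whereas you run the same chain from the other end.
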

\begin{proof}
(1) Let $L:=2K+1$. Using Proposition \ref{FFTmainSym} and Lemma \ref{cakmnSym}, we get 
\begin{align*}
\overline{C_{-m,n}^{K}(f)}
&=\sum_{\|\mathbf{k}\|_{\infty}\le K}\overline{\mathbf{c}(\mathbf{k};-m,n)}\overline{\widehat{f}(\mathbf{k};L)}
=\sum_{\|\mathbf{k}\|_{\infty}\le K}\mathbf{c}(\mathbf{k};m,n)\overline{\widehat{f}(\mathbf{k};L)}
=\sum_{\|\mathbf{k}\|_{\infty}\le K}\mathbf{c}(\mathbf{k};m,n)\widehat{\overline{f}}(-\mathbf{k};L)
\\&=\sum_{\|\mathbf{k}\|_{\infty}\le K}\mathbf{c}(-\mathbf{k};m,n)\widehat{\overline{f}}(\mathbf{k};L)
=(-1)^m\sum_{\|\mathbf{k}\|_{\infty}\le K}\mathbf{c}(\mathbf{k};m,n)\widehat{\overline{f}}(\mathbf{k};L)
=(-1)^mC_{m,n}^{K}(\overline{f}).
\end{align*}
(2) is straightforward from (1).
\end{proof}
We continue by investigation of convergence rate of finite Fourier-Bessel transform using finite Fourier transforms.  Some related technical details discussed in Appendix \ref{Apx}.  The following theorem shows that $C_{m,n}^{K}(f)$ is a numerical approximation of the Fourier-Bessel coefficient $C_{m,n}(f)$ which is also accurate if $K\ge K_{m,n}$ with $K_{m,n}:=\lceil\frac{z_{m,n}}{\pi}\rceil$.
\begin{theorem}\label{CmnKerr}
Let $f\in\mathcal{C}^1(\mathbb{R}^2)\cap\mathcal{A}$ be supported in $\mathbb{B}^\circ$ and $(m,n)\in\mathbb{Z}\times\mathbb{N}$.  Then there exist constants $\gamma_{m,n},c_f>0$ such that 
\[
\left|C_{m,n}(f)-C_{m,n}^{K}(f)\right|\le\frac{2\gamma_{m,n}c_f}{K},\hspace{1cm}{\rm for}\ K\ge K_{m,n}.
\]
\end{theorem}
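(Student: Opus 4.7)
The plan is to exploit the series identity (\ref{C.ZBVC}) and the definition (\ref{DFBT-FFT}) of the FFBT to write the error as a sum of two naturally decoupled contributions and estimate each independently. Since $f\in\mathcal{A}$, the series in (\ref{C.ZBVC}) is absolutely convergent (because we shall verify that $\sum_{\mathbf{k}}|\mathbf{c}(\mathbf{k};m,n)|<\infty$), so for any $K\in\mathbb{N}$ I would decompose
\[
C_{m,n}(f)-C_{m,n}^{K}(f)=\underbrace{\sum_{\|\mathbf{k}\|_\infty\le K}\mathbf{c}(\mathbf{k};m,n)\bigl(\widehat{f}(\mathbf{k})-\widehat{f}(\mathbf{k};2K+1)\bigr)}_{A_K}+\underbrace{\sum_{\|\mathbf{k}\|_\infty>K}\mathbf{c}(\mathbf{k};m,n)\widehat{f}(\mathbf{k})}_{B_K}.
\]

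For the quadrature error $A_K$, the driver is Proposition \ref{mainXIb}: for every $\mathbf{k}\in\mathbb{Z}^2$ with $\|\mathbf{k}\|_\infty\le K$, we have $|\widehat{f}(\mathbf{k})-\widehat{f}(\mathbf{k};2K+1)|\le 96\|\nabla f\|_\infty/(\pi K)$. Pulling the constant out gives
\[
|A_K|\le \frac{96\,\|\nabla f\|_\infty}{\pi K}\cdot \sigma_{m,n},\qquad \sigma_{m,n}:=\sum_{\mathbf{k}\in\mathbb{Z}^2}|\mathbf{c}(\mathbf{k};m,n)|,
\]
provided $\sigma_{m,n}<\infty$. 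This finiteness is where I would invoke Appendix \ref{Apx}: from (\ref{cakmn}) together with the classical large-argument bound $|J_m(x)|\le C_m/\sqrt{x}$, one gets $|\mathbf{c}(\mathbf{k};m,n)|\le \Gamma_{m,n}\,\|\mathbf{k}\|_2^{-5/2}$ once $\pi\|\mathbf{k}\|_2$ is bounded away from $z_{m,n}$, and together with the finitely many nearby lattice points this yields $\sigma_{m,n}<\infty$.

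For the tail $B_K$, the condition $K\ge K_{m,n}=\lceil z_{m,n}/\pi\rceil$ keeps $\pi^2\|\mathbf{k}\|_2^2-z_{m,n}^2$ uniformly bounded below by a positive multiple of $\|\mathbf{k}\|_2^2$ on the summation range $\|\mathbf{k}\|_\infty>K$, so the same Bessel estimate gives $|\mathbf{c}(\mathbf{k};m,n)|\le \Gamma_{m,n}\,\|\mathbf{k}\|_2^{-5/2}\le \Gamma_{m,n}\,K^{-5/2}$. Combining with $f\in\mathcal{A}$, i.e.\ $\|\widehat{f}\|_{\ell^1}:=\sum_{\mathbf{k}}|\widehat{f}(\mathbf{k})|<\infty$, I obtain
\[
|B_K|\le \Gamma_{m,n}\,K^{-5/2}\,\|\widehat{f}\|_{\ell^1},
\]
which decays faster than $1/K$ and can be absorbed into a single $1/K$ bound once $K\ge K_{m,n}$.

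Putting the two estimates together with $\gamma_{m,n}:=\max\{\sigma_{m,n}\cdot 96/\pi,\,\Gamma_{m,n}\}$ and $c_f:=\max\{\|\nabla f\|_\infty,\,\|\widehat{f}\|_{\ell^1}\}$ yields the claimed bound $|C_{m,n}(f)-C_{m,n}^{K}(f)|\le 2\gamma_{m,n}c_f/K$ for $K\ge K_{m,n}$. The main technical obstacle, and the reason the threshold $K_{m,n}$ appears, is the tail estimate on $\mathbf{c}(\mathbf{k};m,n)$: we must avoid the would-be singularity at $\pi\|\mathbf{k}\|_2=z_{m,n}$ and simultaneously secure sufficient decay from the Bessel asymptotics to make the tail sum finite without demanding smoothness beyond $\mathcal{C}^1$. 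Assuming the decay estimate from the appendix, both $A_K$ and $B_K$ fall into place cleanly.
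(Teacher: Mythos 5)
Your proposal is correct and follows essentially the same route as the paper's proof: the identical split into a quadrature error over $\|\mathbf{k}\|_\infty\le K$ (controlled by Proposition \ref{mainXIb} and the summability of $\mathbf{c}(\mathbf{k};m,n)$ from Lemma \ref{SumJmk}) plus a tail over $\|\mathbf{k}\|_\infty>K$ (controlled by the lower bound on $\pi^2\|\mathbf{k}\|_2^2-z_{m,n}^2$ for $K\ge K_{m,n}$ and $f\in\mathcal{A}$). The only cosmetic difference is that the paper bounds the tail coefficients by $\alpha_{m,n}/K^{2}$ using merely $|J_m|\le 1$, whereas you invoke the large-argument Bessel decay to get $K^{-5/2}$; both are $o(1/K)$ and get absorbed the same way, though the cruder bound avoids having to worry about whether $\pi\|\mathbf{k}\|_2$ has entered the asymptotic regime.
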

\begin{proof}
Let $\mathbf{c}(\mathbf{k}):=\mathbf{c}(\mathbf{k};m,n)$.  Lemma \ref{SumJmk},  implies $\beta_{m,n}:=\sum_{\mathbf{k}\in\mathbb{Z}^2}|\mathbf{c}(\mathbf{k})|<\infty$.  So Proposition \ref{mainXIb} gives  
\begin{align*}
&\left|C_{m,n}^{K}(f)-C_{m,n}(f)\right|
=\left|\sum_{\|\mathbf{k}\|_\infty\le K}\mathbf{c}(\mathbf{k})\widehat{f}(\mathbf{k};2K+1)-\sum_{\mathbf{k}\in\mathbb{Z}^2}\mathbf{c}(\mathbf{k})\widehat{f}(\mathbf{k})\right|
\\&\le\sum_{\|\mathbf{k}\|_\infty\le K}|\mathbf{c}(\mathbf{k})||\widehat{f}(\mathbf{k};2K+1)-\widehat{f}(\mathbf{k})|+\sum_{\|\mathbf{k}\|_\infty>K}|\mathbf{c}(\mathbf{k})||\widehat{f}(\mathbf{k})|
\\&\le\frac{96\|\nabla f\|_\infty}{\pi K}\left(\sum_{\|\mathbf{k}\|_\infty\le K}|\mathbf{c}(\mathbf{k})|\right)+\sum_{\|\mathbf{k}\|_\infty>K}|\mathbf{c}(\mathbf{k})||\widehat{f}(\mathbf{k})|
\le\frac{96\|\nabla f\|_\infty}{\pi K}\beta_{m,n}+\sum_{\|\mathbf{k}\|_\infty>K}|\mathbf{c}(\mathbf{k})||\widehat{f}(\mathbf{k})|.
\end{align*}
Let $K\ge K_{m,n}$ and $\mathbf{k}\in\mathbb{Z}^2$ with $\|\mathbf{k}\|_\infty>K$.  Then $\pi^2\|\mathbf{k}\|_2^2>\pi^2K^2>z_{m,n}^2$,
implying that 
\begin{equation}\label{k2Kzmn}
\frac{1}{\pi^2\|\mathbf{k}\|_2^2-z_{m,n}^2}<\frac{1}{\pi^2K^2-z_{m,n}^2}.
\end{equation}
Hence,  if $\|\mathbf{k}\|_\infty>K\ge K_{m,n}$ then (\ref{k2Kzmn}) and Lemma \ref{OmainKmn}, for $t=K$ one can get
\begin{align*}
|\mathbf{c}(\mathbf{k})|
&=\sqrt{\pi}z_{m,n}\frac{|J_m(\pi\|\mathbf{k}\|_2)|}{|\pi^2\|\mathbf{k}\|_2^2-z_{m,n}^2|}=\sqrt{\pi}z_{m,n}\frac{|J_m(\pi\|\mathbf{k}\|_2)|}{\pi^2\|\mathbf{k}\|_2^2-z_{m,n}^2}
\\&\le\frac{ \sqrt{\pi}z_{m,n}}{\pi^2\|\mathbf{k}\|_2^2-z_{m,n}^2}<\frac{ \sqrt{\pi}z_{m,n}}{\pi^2K^2-z_{m,n}^2}\le\frac{\sqrt{\pi}K_{m,n}^2z_{m,n}}{\pi^2K_{m,n}^2-z_{m,n}^2}\frac{1}{K^2}=:\frac{\alpha_{m,n}}{K^2}.
\end{align*}
Therefore, if $K\ge K_{m,n}$ then we have  
\begin{align*}
\sum_{\|\mathbf{k}\|_\infty>K}|\mathbf{c}(\mathbf{k})||\widehat{f}(\mathbf{k})|
\le\frac{\alpha_{m,n}}{K^2}\sum_{\|\mathbf{k}\|_\infty>K}|\widehat{f}(\mathbf{k})|
\le \frac{\alpha_{m,n}}{K^2}\sum_{\mathbf{k}\in\mathbb{Z}^2}|\widehat{f}(\mathbf{k})|
=\frac{\alpha_{m,n}\|f\|_{\mathcal{A}}}{K^2},
\end{align*}
where $\|f\|_{\mathcal{A}}:=\sum_{\mathbf{k}\in\mathbb{Z}^2}|\widehat{f}(\mathbf{k})|$.  Henceforth, we get 
\begin{align*}
\left|C_{m,n}^{K}(f)-C_{m,n}(f)\right|&
\le\frac{96\|\nabla f\|_\infty\beta_{m,n}}{\pi K}+\frac{\alpha_{m,n}\|f\|_{\mathcal{A}}}{K^2}\le\frac{2c_f\gamma_{m,n}}{K},
\end{align*}
where $c_f:=\max\{\pi^{-1}96\|\nabla f\|_\infty,\|f\|_{\mathcal{A}}\}$ and $\gamma_{m,n}:=\max\{\alpha_{m,n},\beta_{m,n}\}$.
\end{proof}
\begin{remark}
Suppose that $(m,n)\in\mathbb{Z}\times\mathbb{N}$. In terms of approximation theory,   Theorem \ref{CmnKerr} reads as follows. If $f\in\mathcal{C}^1(\mathbb{R}^2)\cap\mathcal{A}$ is supported in $\mathbb{B}^\circ$ then   
$\left|C_{m,n}(f)-C_{m,n}^{K}(f)\right|=\mathcal{O}(1/K)$,
for every $K_{m,n}\le K$.  In details, if $\epsilon>0$ is given.  Assume $K:=\lceil \max\{\epsilon^{-1}2c_f\gamma_{m,n},K_{m,n}\}\rceil$. Then $C_{m,n}^K(f)$ approximates the Fourier-Bessel coefficient $C_{m,n}(f)$ with the absolute error less than $\epsilon$. 
\end{remark}
\begin{corollary}
{\it Let $f\in\mathcal{C}^2(\mathbb{R}^2)$ be supported in $\mathbb{B}^\circ$.  Suppose $(m,n)\in\mathbb{Z}\times\mathbb{N}$. Then 
\[
\left|C_{m,n}(f)-C_{m,n}^{K}(f)\right|=\mathcal{O}(1/K),\hspace{1cm}{\rm for}\ K\ge K_{m,n}.
\]
}\end{corollary}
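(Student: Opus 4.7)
The plan is to derive this corollary as a direct consequence of Theorem \ref{CmnKerr}. Since $\mathcal{C}^2(\mathbb{R}^2)\subseteq\mathcal{C}^1(\mathbb{R}^2)$, the only hypothesis of Theorem \ref{CmnKerr} not already in the corollary's assumption is the absolute summability condition $\sum_{\mathbf{k}\in\mathbb{Z}^2}|\widehat{f}(\mathbf{k})|<\infty$ defining membership in $\mathcal{A}$. Therefore the whole argument reduces to verifying that every $f\in\mathcal{C}^2(\mathbb{R}^2)$ supported in $\mathbb{B}^\circ$ automatically lies in $\mathcal{A}$, after which Theorem \ref{CmnKerr} delivers the $\mathcal{O}(1/K)$ bound verbatim.

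To produce this inclusion I would exploit the extra order of smoothness through the Laplacian. Because $\mathrm{supp}(f)\subseteq\mathbb{B}^\circ\subset\Omega^\circ$, the function $f$ extends to a $2$-periodic $\mathcal{C}^2$ function on $\Omega=[-1,1]\times[-1,1]$ that vanishes, together with its first partial derivatives, in an open neighborhood of $\partial\Omega$. Two integrations by parts (one per Cartesian variable) then yield the clean identity
\[
\widehat{\Delta f}(\mathbf{k})=-\pi^2\|\mathbf{k}\|_2^2\,\widehat{f}(\mathbf{k}),\qquad \mathbf{k}\in\mathbb{Z}^2,
\]
with the boundary contributions vanishing automatically. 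Since $\Delta f$ is continuous and compactly supported in $\Omega^\circ$, one has $\Delta f\in L^2(\Omega)$, and Parseval's identity on $\Omega$ forces $\{\widehat{\Delta f}(\mathbf{k})\}_{\mathbf{k}\in\mathbb{Z}^2}\in\ell^2(\mathbb{Z}^2)$. A Cauchy--Schwarz estimate then gives
\[
\sum_{\mathbf{k}\ne\mathbf{0}}|\widehat{f}(\mathbf{k})|=\frac{1}{\pi^2}\sum_{\mathbf{k}\ne\mathbf{0}}\frac{|\widehat{\Delta f}(\mathbf{k})|}{\|\mathbf{k}\|_2^2}\le\frac{1}{\pi^2}\biggl(\sum_{\mathbf{k}\ne\mathbf{0}}|\widehat{\Delta f}(\mathbf{k})|^2\biggr)^{1/2}\biggl(\sum_{\mathbf{k}\ne\mathbf{0}}\frac{1}{\|\mathbf{k}\|_2^4}\biggr)^{1/2},
\]
and the latter series converges in two dimensions (comparison with $\int_1^\infty r^{-3}\,\D r$). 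Hence $f\in\mathcal{A}$ and Theorem \ref{CmnKerr} applies.

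The only subtlety to check carefully is that the Fourier-integral normalization adopted in (\ref{C.ZBVC.alt}) is the one for which Parseval's identity on $\Omega$ is used, and that the two integrations by parts are globally legitimate; both points are routine once one observes that $f$ and $\nabla f$ vanish in a neighborhood of $\partial\Omega$ by the support hypothesis $\mathrm{supp}(f)\subseteq\mathbb{B}^\circ$. No new universal constants need to be tracked, since Theorem \ref{CmnKerr} already absorbs them into the implicit constant of the $\mathcal{O}(1/K)$ estimate.
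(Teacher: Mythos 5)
Your proposal is correct and follows the route the paper evidently intends: the corollary is stated without proof as an immediate consequence of Theorem \ref{CmnKerr}, the only point to check being that $f\in\mathcal{C}^2(\mathbb{R}^2)$ with $\mathrm{supp}(f)\subseteq\mathbb{B}^\circ$ forces $f\in\mathcal{A}$, which you establish by the standard argument (periodic extension, two integrations by parts giving $\widehat{\Delta f}(\mathbf{k})=-\pi^2\|\mathbf{k}\|_2^2\widehat{f}(\mathbf{k})$, Parseval, and Cauchy--Schwarz against $\sum_{\mathbf{k}\ne\mathbf{0}}\|\mathbf{k}\|_2^{-4}<\infty$). The details you supply are sound, including the observation that the boundary terms vanish because $f$ and $\nabla f$ vanish identically near $\partial\Omega$.
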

\begin{remark}\label{FFBTstp}
The FFBT is compatible with steerable property of Fourier-Bessel basis in the asymptotic sense.  Let $f\in\mathcal{C}^2(\mathbb{R}^2)$ be supported in $\mathbb{B}^\circ$.  Suppose $(m,n)\in\mathbb{Z}\times\mathbb{N}$ and $\phi\in[0,2\pi)$.  Assume that $R_\phi f(r,\theta):=f(r,\theta+\phi)$ for every $(r,\theta)\in\mathbb{R}^2$.  Then 
$$\left|C_{m,n}^{K}(R_\phi f)-e^{\ii m\phi}C_{m,n}^{K}(f)\right|=\mathcal{O}(1/K) \hspace{1cm}{\rm for}\ K\ge K_{m,n}.$$
\end{remark}
The following diagram compares construction of Fourier-Bessel transform $C_{m,n}(f)$ and finite Fourier-Bessel transform $C_{m,n}^K(f)$ for $f\in\mathcal{C}^1(\mathbb{R}^2)\cap\mathcal{A}$ at order $K\in\mathbb{N}$. 
\begin{figure}[H]
\begin{tikzcd}
f \arrow[r, "\widehat{}"] \arrow[d, "C_{m,n}"] &  \left(\widehat{f}(\mathbf{k})\right)_{\mathbf{k}\in\mathbb{Z}^2} \arrow[d] \arrow[r, "\mathcal{O}",equal] & \left(\widehat{f}(\mathbf{k};2K+1)\right)_{\|\mathbf{k}\|_\infty\le K}\arrow[d]\\
C_{m,n}(f) \arrow[r,  "(\ref{C.ZBVC})",equal]& \sum_{\mathbf{k}\in\mathbb{Z}^2}\mathbf{c}(\mathbf{k};m,n)\widehat{f}(\mathbf{k})  
 \arrow[r,  "\mathcal{O}",equal] & C_{m,n}^K(f):=\sum_{\|\mathbf{k}\|_\infty\le K}\mathbf{c}(\mathbf{k};m,n)\widehat{f}(\mathbf{k};2K+1)
\end{tikzcd}
\caption{The diagram comparing construction of Fourier-Bessel transforms vs.  FFBT}
\end{figure}
\subsection{Inverse Finite Fourier-Bessel transform (iFFBT)}\label{iFFBT}
In this part,  using the notion of FFBT,  developed in Section \ref{FFBT},  we introduce the notion of inverse finite Fourier-Bessel transforms (iFFBT) as numerical approximations of Fourier-Bessel partial sums given by (\ref{SMN}),  for functions supported on disks.  It is shown that inverse finite Fourier-Bessel transforms are converging to Fourier-Bessel partial sums in the uniform sense.  The matrix form of iFFBT discussed in Section \ref{MatiFFBT}.
 
Suppose $f\in\mathcal{V}$ and $M,N\in\mathbb{N}$.  
The inverse finite Fourier-Bessel transform ($\mathbf{iFFBT}$) of order $K\in\mathbb{N}$ of $S_{M,N}(f)$ is given by 
\begin{equation}\label{iDFBT-FFT}
S_{M,N}^{K}(f)(x,y)
:=\sum_{m=-M}^M\sum_{n=1}^NC_{m,n}^{K}(f)\Psi_{m,n}(x,y),
\end{equation}
for $(x,y)\in\mathbb{R}^2$, where $C_{m,n}^K(f)$ is given by (\ref{DFBT-FFT}).

Then  
\begin{equation}\label{iDFBT-FFTalt}
S_{M,N}^{K}(f)(x,y)
=\sum_{\|\mathbf{k}\|_\infty\le K}\left(\sum_{m=-M}^M\sum_{n=1}^N\mathbf{c}(\mathbf{k};m,n)\Psi_{m,n}(x,y)\right)\widehat{f}(\mathbf{k};2K+1).
\end{equation}
Throughout, we discuss a criteria which guarantees convergence of the iFFBT to the partial sums of Fourier-Bessel expansions. 
For $M,N\in\mathbb{N}$, let 
\[
K{[M,N]}:=\max\{K_{m,n}:0\le m\le M,1\le n\le N\}.
\]

Next theorem shows that $S_{M,N}^{K}(f)$ can be considered as an accurate  uniform approximation of the partial Fourier-Bessel sum $S_{M,N}(f)$ if $K$ is proportional to $M,N$.  
\begin{theorem}\label{SMNKer}
Let $f\in\mathcal{C}^1(\mathbb{R}^2)\cap\mathcal{A}$ be a function supported in $\mathbb{B}^\circ$ and $M,N\in\mathbb{N}$.  Then there exist constants $c_f,D[M,N]>0$ such that 
\[
\left\|S_{M,N}^{K}(f)-S_{M,N}(f)\right\|_\infty\le\frac{2c_fD[M,N]}{K},\hspace{1.5cm}{\rm for}\ K\ge K[M,N].
\] 
\end{theorem}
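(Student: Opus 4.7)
The plan is to reduce the uniform estimate to the pointwise per-coefficient estimate already proved in Theorem~\ref{CmnKerr}. Subtracting~(\ref{iDFBT-FFT}) from~(\ref{SMN}) gives, at every $(x,y)\in\mathbb{R}^2$,
\[
S_{M,N}(f)(x,y)-S_{M,N}^K(f)(x,y)=\sum_{m=-M}^M\sum_{n=1}^N\bigl(C_{m,n}(f)-C_{m,n}^K(f)\bigr)\Psi_{m,n}(x,y),
\]
so the triangle inequality yields
\[
\left\|S_{M,N}(f)-S_{M,N}^K(f)\right\|_\infty
\le \sum_{m=-M}^M\sum_{n=1}^N\bigl|C_{m,n}(f)-C_{m,n}^K(f)\bigr|\,\|\Psi_{m,n}\|_\infty.
\]

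Next I would check that Theorem~\ref{CmnKerr} applies to every term in the double sum under the hypothesis $K\ge K[M,N]$. Since $J_{-m}=(-1)^mJ_m$, the positive zeros satisfy $z_{-m,n}=z_{m,n}$ and hence $K_{-m,n}=K_{m,n}$, so $K[M,N]=\max\{K_{m,n}:|m|\le M,\,1\le n\le N\}$. Consequently $K\ge K_{m,n}$ for every index in the range of summation, and Theorem~\ref{CmnKerr} gives
\[
\bigl|C_{m,n}(f)-C_{m,n}^K(f)\bigr|\le \frac{2\gamma_{m,n}c_f}{K},
\]
with $c_f:=\max\{\pi^{-1}96\|\nabla f\|_\infty,\,\|f\|_{\mathcal{A}}\}$ as in the proof of Theorem~\ref{CmnKerr} (note that $c_f$ depends only on $f$, not on $m,n,K$).

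Finally, I would verify that $\|\Psi_{m,n}\|_\infty$ is finite: by~(\ref{2D.B.Fr.gp}) and~(\ref{Jnm.a}),
\[
\|\Psi_{m,n}\|_\infty \;\le\; \frac{1}{\sqrt{2\pi}}\,\|\mathcal{J}_{m,n}\|_\infty \;\le\; \frac{1}{\sqrt{\pi}\,|J_{m+1}(z_{m,n})|}\,\sup_{r\in[0,1]}|J_m(z_{m,n}r)|,
\]
which is a finite quantity depending only on $(m,n)$. Setting
\[
D[M,N]:=\sum_{m=-M}^M\sum_{n=1}^N \gamma_{m,n}\,\|\Psi_{m,n}\|_\infty
\]
then produces the bound $2c_fD[M,N]/K$ and completes the argument. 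There is no real obstacle here; the result is essentially a bookkeeping consequence of Theorem~\ref{CmnKerr}. The only two subtleties to flag are (i) the $m\leftrightarrow -m$ symmetry that justifies restricting the $\max$ in $K[M,N]$ to $m\ge 0$, and (ii) the separation of the $K$-dependent factor from the $(m,n)$-dependent factors so that $D[M,N]$ is finite and independent of $K$.
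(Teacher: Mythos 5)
Your proof is correct and follows essentially the same route as the paper: apply Theorem~\ref{CmnKerr} termwise under $K\ge K[M,N]$, bound $|\Psi_{m,n}|$ by a constant depending only on $(m,n)$ (the paper uses $|\Psi_{m,n}(x,y)|\le |J_{m+1}(z_{m,n})|^{-1}$), and absorb the double sum into $D[M,N]$. Your explicit remark that $z_{-m,n}=z_{m,n}$ justifies restricting the maximum in $K[M,N]$ to $m\ge 0$ is a detail the paper leaves implicit, but it does not change the argument.
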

\begin{proof}
Suppose that $K\ge K[M,N]$.  Then $K\ge K_{m,n}$ and hence using Theorem \ref{CmnKerr}, we achieve 
\begin{equation}\label{SMNKerr.alt}
\left|C_{m,n}^{K}(f)-C_{m,n}(f)\right|\le\frac{2\gamma_{m,n}c_f}{K},
\end{equation}
for every $0\le|m|\le M$ and $1\le n\le N$. Therefore,  using (\ref{SMNKerr.alt}), we obtain 
\begin{align*}
&\left|S_{M,N}^{K}(f)(x,y)-S_{M,N}(f)(x,y)\right|
\le\sum_{m=-M}^M\sum_{n=1}^N\left|C_{m,n}^{K}(f)-C_{m,n}(f)\right|\left|\Psi_{m,n}(x,y)\right|
\\&\le\frac{2c_f}{K}\sum_{m=-M}^M\sum_{n=1}^N\gamma_{m,n}\left|\Psi_{m,n}(x,y)\right|
\le\frac{2c_f}{K}\sum_{m=-M}^M\sum_{n=1}^N\frac{\gamma_{m,n}}{|J_{m+1}(z_{m,n})|}=:\frac{2c_fD[M,N]}{K},
\end{align*}
for every $(x,y)\in\Omega$. So, we get 
\[
\left\|S_{M,N}^{K}(f)-S_{M,N}(f)\right\|_\infty=\sup_{(x,y)\in\Omega}\left|S_{M,N}^{K}(f)(x,y)-S_{M,N}(f)(x,y)\right|\le\frac{2c_f D[M,N]}{K}.
\]
\end{proof}
\begin{remark}
Let $f\in \mathcal{C}^1(\mathbb{R}^2)\cap\mathcal{A}$ be supported in $\mathbb{B}^\circ$ and $M,N\in\mathbb{N}$.  Suppose $\epsilon>0$ is given.  Assume $K:=\lceil \max\{\epsilon^{-1}2c_fD[M,N],K[M,N]\}\rceil$. Then $S_{M,N}^K(f)$ uniformly approximates the Fourier-Bessel partial sum $S_{M,N}(f)$ with the absolute error less than $\epsilon$. 
\end{remark}
\begin{corollary}
{\it Let $f\in\mathcal{C}^2(\mathbb{R}^2)$ be a function supported in $\mathbb{B}^\circ$ and $M,N\in\mathbb{N}$.  Then 
\[
\left\|S_{M,N}^{K}(f)-S_{M,N}(f)\right\|_\infty=\mathcal{O}(1/K),\hspace{1.5cm}{\rm for}\ K\ge K[M,N].
\] 
}\end{corollary}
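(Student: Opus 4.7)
The approach is straightforward: the corollary is meant to be a direct consequence of Theorem \ref{SMNKer}, whose hypothesis already delivers the same $\mathcal{O}(1/K)$ conclusion. The entire task therefore reduces to verifying that the stronger hypothesis of the corollary (namely $f \in \mathcal{C}^2(\mathbb{R}^2)$ with $\mathrm{supp}(f) \subset \mathbb{B}^\circ$) implies the hypothesis of Theorem \ref{SMNKer}, i.e.\ $f \in \mathcal{C}^1(\mathbb{R}^2) \cap \mathcal{A}$. The inclusion $\mathcal{C}^2 \subset \mathcal{C}^1$ is trivial, so the only substantive step is the verification that $f \in \mathcal{A}$, that is, $\sum_{\mathbf{k} \in \mathbb{Z}^2} |\widehat{f}(\mathbf{k})| < \infty$.

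To show this, I would mirror the construction from the proof of Proposition \ref{mainXIb}: let $U$ denote the restriction of $f$ to $\Omega$, extended periodically to the torus $\mathbb{T}^2 \cong \Omega/\!\sim$. Since $\mathrm{supp}(f) \subset \mathbb{B}^\circ$ is compactly contained in $\Omega^\circ$, the function $U$ and all of its partial derivatives up to order two vanish near $\partial\Omega$, so the periodic extension lies in $\mathcal{C}^2(\mathbb{T}^2)$. The identity $\widehat{f}(\mathbf{k}) = 4\,\widehat{U}[\mathbf{k}]$ already invoked in that proof reduces the question to showing absolute summability of the Fourier-series coefficients $\widehat{U}[\mathbf{k}]$.

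For this step, my plan is a Parseval/Cauchy--Schwarz argument using the Laplacian. The function $\Delta U$ is continuous on the compact torus, hence bounded, and in particular $\Delta U \in L^2(\mathbb{T}^2)$; in the normalization of the excerpt, $\widehat{\Delta U}[\mathbf{k}] = -\pi^2 \|\mathbf{k}\|_2^2 \widehat{U}[\mathbf{k}]$, so Parseval gives $\sum_{\mathbf{k}} \|\mathbf{k}\|_2^4 |\widehat{U}[\mathbf{k}]|^2 < \infty$. Combined with the convergence of the two-dimensional sum $\sum_{\mathbf{k} \neq 0} \|\mathbf{k}\|_2^{-4}$ (a standard annular estimate reducing to $\sum_n n^{-3} < \infty$), Cauchy--Schwarz yields
\[
\sum_{\mathbf{k} \neq 0} |\widehat{U}[\mathbf{k}]| \le \Bigl( \sum_{\mathbf{k} \neq 0} \|\mathbf{k}\|_2^4 |\widehat{U}[\mathbf{k}]|^2 \Bigr)^{1/2} \Bigl( \sum_{\mathbf{k} \neq 0} \|\mathbf{k}\|_2^{-4} \Bigr)^{1/2} < \infty,
\]
and adding the $\mathbf{k}=0$ term shows $f \in \mathcal{A}$.

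Once $f \in \mathcal{C}^1(\mathbb{R}^2) \cap \mathcal{A}$ is in hand, Theorem \ref{SMNKer} supplies constants $c_f, D[M,N]>0$ with $\|S_{M,N}^{K}(f) - S_{M,N}(f)\|_\infty \le 2 c_f D[M,N]/K$ for every $K \ge K[M,N]$, which is exactly the $\mathcal{O}(1/K)$ claim. The only real obstacle is the absolute-summability step; everything else is either definitional or directly quoted. That obstacle is handled cleanly by the Parseval/Cauchy--Schwarz route above, which exploits the fact that one order of smoothness beyond $\mathcal{C}^1$ (manifesting as $\Delta U \in L^2$) is precisely what is needed to push the Fourier coefficients into $\ell^1(\mathbb{Z}^2)$ in dimension two.
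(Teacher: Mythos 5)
Your proposal is correct and follows the route the paper intends: the corollary is stated without proof as an immediate consequence of Theorem \ref{SMNKer}, the only content being that $f\in\mathcal{C}^2(\mathbb{R}^2)$ supported in $\mathbb{B}^\circ$ lies in $\mathcal{C}^1(\mathbb{R}^2)\cap\mathcal{A}$, and your Parseval/Cauchy--Schwarz argument via $\widehat{\Delta U}[\mathbf{k}]=-\pi^2\|\mathbf{k}\|_2^2\,\widehat{U}[\mathbf{k}]$ and $\sum_{\mathbf{k}\neq 0}\|\mathbf{k}\|_2^{-4}<\infty$ is a standard and valid way to supply the missing step $f\in\mathcal{A}$. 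One minor imprecision: $\mathbb{B}^\circ$ is \emph{not} compactly contained in $\Omega^\circ$ (the closed disk touches $\partial\Omega$ at $(\pm1,0)$ and $(0,\pm1)$), so rather than saying $U$ vanishes \emph{near} $\partial\Omega$, you should argue that $f$ and all its partial derivatives of order $\le 2$ vanish \emph{on} $\partial\Omega$ (by continuity from the open exterior of $\mathbb{B}$), which still suffices for the periodization to be $\mathcal{C}^2$ on the torus and leaves the rest of your argument intact.
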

\begin{remark}\label{iFFBTstp}
The iFFBT satisfies the following asymptotic steerability.  Let $f\in\mathcal{C}^2(\mathbb{R}^2)$ be supported in $\mathbb{B}^\circ$.  Suppose $M,N\in\mathbb{N}$ and $\phi\in[0,2\pi)$.  Then 
\[
\left\|S_{M,N}^{K}(R_\phi f)-R_\phi S_{M,N}^{K}(f)\right\|_\infty=\mathcal{O}(1/K),  \hspace{1.5cm}{\rm for}\ K\ge K[M,N].
\]
\end{remark}
\begin{remark}\label{Ba}
The iFFBT numerical scheme can be canonically applied for approximation of functions supported in arbitrary disks by scaling. Let $a>0$ and $f:\mathbb{R}^2\to\mathbb{C}$ be an integrable function supported in $\mathbb{B}_a$.  Then $\widetilde{f}:\mathbb{R}^2\to\mathbb{C}$ given by $\widetilde{f}(\mathbf{x}):=f(a\mathbf{x})$ is supported in $\mathbb{B}$.  Assume that $S_{M,N}(\widetilde{f})$ is an approximation of $\widetilde{f}$ in $L^2$-sense for large enough $M,N$. Then $$S_{M,N}^{a,K}(f)(\mathbf{x}):=S_{M,N}^K(\widetilde{f})(a^{-1}\mathbf{x})$$ is a numerical approximation of $f(\mathbf{x})$ in $L^2$-sense, if $K\ge K[M,N]$.
\end{remark}
\section{\bf Numerics of FFBT/iFFBT in MATLAB}

This section discusses a unified strategy in terms of matrix functionals including matrix multiplication and matrix traces for numerical implementation of finite Fourier-Bessel transforms using fast Fourier algorithms in MATLAB.

\subsection{Matrix form of finite Fourier transforms on disks}\label{MatFFT}
This part presents the matrix form of numerical Fourier integral on disks,   discussed in  Section \ref{FFtD}.  Let $f:\mathbb{R}^2\to\mathbb{C}$ be supported in $\mathbb{B}$,  $\mathbf{k}:=(k_1,k_2)^T\in\mathbb{Z}^2$, and $L\in\mathbb{N}$.  Suppose $x_i:=-1+(i-1)\delta$ with $\delta:=2/L$ and $1\le i\le L$.
So,  
\begin{align*}
\widehat{f}(\mathbf{k};L)&=\delta^2\sum_{i=1}^L\sum_{j=1}^Lf(x_i,x_j)e^{-\pi\ii (k_1x_i+k_2x_j)}
\\&=\delta^2(-1)^{k_1+k_2}\sum_{i=1}^L\sum_{j=1}^Lf(x_i,x_j)e^{-2\pi\ii k_1(i-1)/L}e^{-2\pi\ii k_2(j-1)/L}
\\&=\delta^2(-1)^{k_1+k_2}\sum_{i=1}^L\sum_{j=1}^Lf(x_i,x_j)e^{-2\pi\ii\tau_L(k_1)(i-1)/L}e^{-2\pi\ii\tau_L(k_2)(j-1)/L}
\\&=\delta^2(-1)^{k_1+k_2}\widehat{\mathbf{F}}(\tau_L(k_1)+1,\tau_L(k_2)+1),
\end{align*}
where the matrix $\mathbf{F}\in\mathbb{C}^{L\times L}$ is given by 
$\mathbf{F}(i,j):=f(x_i,x_j)$ for every $1\le i,j\le L$, $\widehat{\mathbf{F}}$ is the DFT of $\mathbf{F}$ given by (\ref{DFT}) and $\tau_L(k_\imath):=\mathrm{mod}(k_\imath,L)$ for $\imath=1,2$.

We here introduce Algorithm \ref{Xi.kK} using regular sampling on $\Omega$ to approximate the Fourier integrals of the form $\widehat{f}(\mathbf{k})$ for function $f$ supported on $\mathbb{B}$ with respect to a given absolute error.
\begin{algorithm}[H]
\caption{Finding $\epsilon$-approximation of $\widehat{f}(\mathbf{k})$ using regular square sampling of $\Omega$ and FFT} 
\begin{algorithmic}[1]
\State{\bf input data} The function $f:\mathbb{R}^2\to\mathbb{C}$ supported in $\mathbb{B}$,  given error $\epsilon>0$ and $\mathbf{k}:=(k_1,k_2)^T\in\mathbb{Z}^2$
\State {\bf output result} $\widehat{f}(\mathbf{k},L)$ with the absolute error $\le\epsilon$\\ 
Put $\beta_\epsilon:=\max\{\epsilon^{-1}\pi^{-1}96\|\nabla f\|_\infty,\|\mathbf{k}\|_\infty\}$,  find $K\in\mathbb{N}$ such that $K\ge\beta_\epsilon$ and let  $L:=2K+1$\\
Generate the uniform square sampling gird $(x_i,x_j)$ with $x_i:=-1+(i-1)\delta$,  $\delta:=\frac{2}{L}$, and $1\le i\le L$\\
Generate sampled values $\mathbf{F}:=(f(x_i,x_j))_{1\le i,j\le L}$ 	of the 
	function $f:\mathbb{R}^2\to\mathbb{C}$.
\State Compute 
$$\widehat{f}(\mathbf{k};L):=\delta^2(-1)^{k_1+k_2}\widehat{\mathbf{F}}(\tau_L(k_1)+1,\tau_L(k_2)+1).$$
\end{algorithmic} 
\label{Xi.kK}
\end{algorithm}
\subsection{Matrix form of FFBT}\label{MatFFBT}
Throughout,  we discuss the matrix form of the finite Fourier-Bessel transform given in (\ref{DFBT-FFT}).   Assume $(m,n)\in\mathbb{Z}\times\mathbb{N}$.  For simplicity,  we may use $\mathbf{c}(k_1,k_2)$ instead of $\mathbf{c}(k_1,k_2;m,n)$ for $k_1,k_2\in\mathbb{Z}$.
Let $K\in\mathbb{N}$ and $L:=2K+1$.  Assume $f\in\mathcal{V}$ and $\mathbf{F}\in\mathbb{C}^{L\times L}$ is given by $\mathbf{F}(i,j):=f(x_i,x_j)$ where $x_j:=-1+(j-1)\delta$ with $\delta:=\frac{2}{L}$. 
Suppose $-K\le k_2\le K$ is arbitrary. 
Then
\begin{align*}
&\sum_{k_1=-K}^{K}\mathbf{c}(k_1,k_2;m,n)(-1)^{k_1}\widehat{\mathbf{F}}(\tau_L(k_1)+1,\tau_L(k_2)+1)
\\&=\sum_{k_1=-K}^{-1}\mathbf{c}(k_1,k_2)(-1)^{k_1}\widehat{\mathbf{F}}(L+k_1+1,\tau_L(k_2)+1)
+\sum_{k_1=0}^{K}\mathbf{c}(k_1,k_2)(-1)^{k_1}\widehat{\mathbf{F}}(k_1+1,\tau_L(k_2)+1)
\\&=\sum_{k_1=K+2}^{L}\mathbf{c}(k_1-L-1,k_2)(-1)^{k_1}\widehat{\mathbf{F}}(k_1,\tau_L(k_2)+1)
+\sum_{k_1=1}^{K+1}\mathbf{c}(k_1-1,k_2)(-1)^{k_1-1}\widehat{\mathbf{F}}(k_1,\tau_L(k_2)+1)
\\&=\sum_{l=1}^LQ_{k_2}(m,n;l)\widehat{\mathbf{F}}(l,\tau_L(k_2)+1),
\end{align*}
where 
\begin{equation}\label{Qk}
Q_{k_2}(m,n;l):=\left\{\begin{array}{lll}
\mathbf{c}(l-1,k_2;m,n)(-1)^{l-1} & {\rm if}\ 1\le l\le K+1\\
\mathbf{c}(l-L-1,k_2;m,n)(-1)^{l} & {\rm if}\ K+2\le l\le L\\
\end{array}\right..
\end{equation}
Therefore,  we get 
\begin{align*}
&C_{m,n}^{K}(f)=\sum_{\|\mathbf{k}\|_{\infty}\le K}\mathbf{c}(\mathbf{k};m,n)\widehat{f}(\mathbf{k};L)
=\sum_{k_1=-K}^{K}\sum_{k_2=-K}^K\mathbf{c}(k_1,k_2;m,n)\widehat{f}( k_1, k_2;L)
\\&=\delta^2\sum_{k_1=-K}^{K}\sum_{k_2=-K}^K\mathbf{c}(k_1,k_2;m,n)(-1)^{k_1+k_2}\widehat{\mathbf{F}}(\tau_L(k_1)+1,\tau_L(k_2)+1)
\\&=\delta^2\sum_{k_2=-K}^K(-1)^{k_2}\sum_{k_1=-K}^{K}\mathbf{c}(k_1,k_2;m,n)(-1)^{k_1}\widehat{\mathbf{F}}(\tau_L(k_1)+1,\tau_L(k_2)+1)
\\&=\delta^2\sum_{k_2=-K}^K(-1)^{k_2}\sum_{l=1}^LQ_{k_2}(m,n;l)\widehat{\mathbf{F}}(l,\tau_L(k_2)+1)
=\delta^2\sum_{l=1}^L\sum_{k_2=-K}^K(-1)^{k_2}Q_{k_2}(m,n;l)\widehat{\mathbf{F}}(l,\tau_L(k_2)+1).
\end{align*}
Assume $1\le l\le L$.  Then,  using similar argument, we get 
\begin{align*}
&\sum_{k_2=-K}^K(-1)^{k_2}Q_{k_2}(m,n;l)\widehat{\mathbf{F}}(l,\tau_L(k_2)+1)
\\&=\sum_{k_2=-K}^{-1}(-1)^{k_2}Q_{k_2}(m,n;l)\widehat{\mathbf{F}}(l,L+k_2+1)+\sum_{k_2=0}^K(-1)^{k_2}Q_{k_2}(m,n;l)\widehat{\mathbf{F}}(l,k_2+1)
\\&=\sum_{k_2=K+2}^{L}(-1)^{k_2}Q_{k_2-L-1}(m,n;l)\widehat{\mathbf{F}}(l,k_2)+\sum_{k_2=1}^{K+1}(-1)^{k_2-1}Q_{k_2-1}(m,n;l)\widehat{\mathbf{F}}(l,k_2)
=\sum_{\ell=1}^LQ(m,n;l,\ell)\widehat{\mathbf{F}}(l,\ell),
\end{align*}
where 
\[
Q(m,n;l,\ell):=\left\{\begin{array}{lll}
Q_{\ell-1}(m,n;l)(-1)^{\ell-1} & {\rm if}\ 1\le \ell\le K+1\\
Q_{\ell-L-1}(m,n;l)(-1)^{\ell} & {\rm if}\ K+2\le \ell\le L\\
\end{array}\right..
\]
So, we obtain 
\begin{equation}\label{CmnaK.LL}
C_{m,n}^{K}(f)=\delta^2\sum_{l=1}^L\sum_{k_2=-K}^K(-1)^{k_2}Q_{k_2}(m,n;l)\widehat{\mathbf{F}}(l,\tau_L(k_2)+1)
=\delta^2\sum_{l=1}^L\sum_{\ell=1}^LQ(m,n;l,\ell)\widehat{\mathbf{F}}(l,\ell).
\end{equation}
In addition,  by applying (\ref{Qk}) we get 
\begin{equation}
Q(m,n;l,\ell)=\left\{\begin{array}{llll}
\mathbf{c}(l-1,\ell-1;m,n)(-1)^{l+\ell} & {\rm if}\ 1\le \ell\le K+1,\ 1\le l\le K+1\\
\mathbf{c}(l-L-1,\ell-1;m,n)(-1)^{l+\ell-1}& {\rm if}\ 1\le \ell\le K+1,\ K+2\le l\le L\\
\mathbf{c}(l-1,\ell-L-1;m,n)(-1)^{l+\ell-1}  & {\rm if}\ K+2\le \ell\le L,  \ 1\le l\le K+1\\
\mathbf{c}(l-L-1,\ell-L-1;m,n)(-1)^{l+\ell} & {\rm if}\ K+2\le \ell\le L,\ K+2\le l\le L\\
\end{array}\right..
\end{equation} 
Assume $\mathbf{Q}(m,n,:,:)\in\mathbb{C}^{L\times L}$ is given by 
$$\mathbf{Q}(m,n,\ell,l):=Q(m,n;l,\ell), $$ for every $1\le \ell,l\le L$.  Then
\begin{equation}\label{MAT.DFBT-FFT}
C_{m,n}^{K}(f)=\delta^2\mathrm{tr}[\mathbf{Q}(m,n,:,:)\widehat{\mathbf{F}}].
\end{equation}
Indeed,  using (\ref{CmnaK.LL}),  we can write 
\begin{align*}
C_{m,n}^{K}(f)
&=\delta^2\sum_{\ell=1}^L\sum_{l=1}^L\mathbf{Q}(m,n,\ell,l)\mathbf{\widehat{F}}(l,\ell)
=\delta^2\sum_{\ell=1}^L(\mathbf{Q}(m,n,:,:)\mathbf{\widehat{F}})_{\ell,\ell}=\delta^2\mathrm{tr}[\mathbf{Q}(m,n,:,:)\widehat{\mathbf{F}}].
\end{align*}

We here give Algorithm \ref{DFBT-FFTalgE} using regular sampling to approximate the Fourier-Bessel coefficient $C_{m,n}(f)$ of a function $f:\mathbb{R}^2\to\mathbb{C}$ supported in $\mathbb{B}$ with respect to a given absolute error.
\begin{algorithm}[H]
\caption{Finding $\epsilon$-approximation of $C_{m,n}(f)$ using $C_{m,n}^{K}(f)$} 
\begin{algorithmic}[1]
\State{\bf input data} The function $f:\mathbb{R}^2\to\mathbb{C}$ supported in $\mathbb{B}^\circ$,  given error $\epsilon>0$ and $(m,n)\in\mathbb{Z}\times\mathbb{N}$
\State {\bf output result} $C_{m,n}^{K}(f)$ with the absolute error $\le\epsilon$\\ 
Let $K_{m,n}:=\lceil\frac{z_{m,n}}{\pi}\rceil$ and put $\beta_\epsilon:=\max\{2\epsilon^{-1}c_f\gamma_{m,n},K_{m,n}\}$.\\
	Find $K\in\mathbb{N}$ such that $K\ge\beta_\epsilon$ and let $L:=2K+1$.\\
Load precomputed data $\mathbf{Q}(m,n,:,:)\in\mathbb{C}^{L\times L}$\\
Generate the sampling grid $(x_i,x_j)$ with $x_i:=-1+(i-1)\delta$,  $\delta:=\frac{2}{L}$, and $1\le i\le L$\\
Generate sampled values $\mathbf{F}:=(f(x_i,x_j))_{1\le i,j\le L}$ on the uniform grid of $\Omega$.
\State Compute the DFT $\widehat{\mathbf{F}}$.
\State Compute 
$$C_{m,n}^{K}(f)=\delta^2\mathrm{tr}[\mathbf{Q}(m,n,:,:)\widehat{\mathbf{F}}].$$
\end{algorithmic} 
\label{DFBT-FFTalgE}
\end{algorithm}
\subsection{Matrix form of iFFBT}\label{MatiFFBT}
Then we discuss the matrix form for iFFBT using finite Fourier transform and DFT in terms of matrix multiplication and matrix trace.  

Let $f\in\mathcal{V}$ and $M,N\in\mathbb{N}$.  Suppose $K\in\mathbb{N}$ and $L:=2K+1$.  Let $x_i:=-1+(i-1)\delta$ with $\delta:=\frac{2}{L}$ for $1\le i\le L$.  Let $\mathbf{F}\in\mathbb{C}^{L\times L}$ be given by $\mathbf{F}(i,j):=f(x_i,x_j)$.  Assume $\widehat{\mathbf{F}}$ is the DFT of $\mathbf{F}$ and $(x,y)\in\Omega$.  Then (\ref{iDFBT-FFTalt}) and (\ref{CmnaK.LL}) imply that  
\begin{equation}\label{iFFBTaltMat}
S_{M,N}^{K}(f)(x,y)
=\delta^2\sum_{l=1}^{L}\sum_{\ell=1}^{L}\left(\sum_{m=-M}^M\sum_{n=1}^NQ(m,n;l,\ell)\Psi_{m,n}(x,y)\right)\widehat{\mathbf{F}}(l,\ell).
\end{equation}
Hence,  if $1\le l,\ell\le L$,  we get  
\begin{align*}
&\sum_{m=-M}^M\sum_{n=1}^NQ(m,n;l,\ell)\Psi_{m,n}(x,y)=\sum_{t=1}^{2M+1}\sum_{n=1}^NQ(t-M-1,n;l,\ell)\Psi_{t-M-1,n}(x,y)
\\&=\sum_{t=1}^{2M+1}\sum_{n=1}^N\mathbf{H}(l,\ell,:,:)_{t,n}(\mathbf{P}(x,y)^T)_{n,t}
=\sum_{t=1}^{2M+1}(\mathbf{H}(l,\ell,:,:)\mathbf{P}(x,y)^T)_{t,t}=\mathrm{tr}\left[\mathbf{H}(l,\ell,:,:)\mathbf{P}(x,y)^T\right],
\end{align*}
where the array $\mathbf{H}\in\mathbb{C}^{L\times L\times(2M+1)\times N}$ and 
the matrix $\mathbf{P}(x,y)\in\mathbb{C}^{(2M+1)\times N}$ are given by 
\begin{equation*}\label{Hmat+PaxyMain}
\mathbf{H}(l,\ell,t,n):=Q(t-M-1,n,l,\ell), \hspace{1cm}\mathbf{P}(x,y)_{t,n}:=
		\Psi_{t-1-M,n}(x,y).
\end{equation*}
Therefore,  using (\ref{iFFBTaltMat}),  we obtain 
\begin{align*}
S_{M,N}^K(f)(x,y)
&=\delta^2\sum_{l=1}^{L}\sum_{\ell=1}^{L}\mathrm{tr}\left[\mathbf{H}(l,\ell,:,:)\mathbf{P}(x,y)^T\right]\widehat{\mathbf{F}}(l,\ell)
=\delta^2\sum_{l=1}^{L}\sum_{\ell=1}^{L}\mathbf{K}(x,y)_{\ell,l}\widehat{\mathbf{F}}(l,\ell)
\\&=\delta^2\sum_{\ell=1}^{L}\sum_{l=1}^{L}\mathbf{K}(x,y)_{\ell,l}\widehat{\mathbf{F}}(l,\ell)
=\delta^2\sum_{\ell=1}^{L}\left(\mathbf{K}(x,y)\widehat{\mathbf{F}}\right)_{\ell,\ell}=\delta^2\mathrm{tr}\left[\mathbf{K}(x,y)\widehat{\mathbf{F}}\right],
\end{align*}
implying the following matrix form  
\begin{equation}\label{SMNKxy.tr}
S_{M,N}^{K}(f)(x,y)=\delta^2\mathrm{tr}\left[\mathbf{K}(x,y)\widehat{\mathbf{F}}\right],
\end{equation}
where the square matrix $\mathbf{K}(x,y)\in\mathbb{C}^{L\times L}$ is defined by 
\[
\mathbf{K}(x,y)_{\ell,l}:=\mathrm{tr}\left[\mathbf{H}(l,\ell,:,:)\mathbf{P}(x,y)^T\right],\hspace{1cm}\ {\rm for}\ 1\le l,\ell\le L.
\]

We here give Algorithm \ref{iDFBT-FFTalg} using regular sampling to approximate the Fourier-Bessel partial sum $S_{M,N}(f)$ of a function $f$ supported in $\mathbb{B}$ with respect to a given absolute error by $S_{M,N}^{K}(f)$.
\begin{algorithm}[H]
\caption{Finding $\epsilon$-approximation of $S_{M,N}(f)(x,y)$ using $S_{M,N}^K(f)(x,y)$} 
\begin{algorithmic}[1]
\State{\bf input data} The function $f$ supported in $\mathbb{B}^\circ$,  $(x,y)\in\mathbb{B}$,  given error $\epsilon>0$,  and $M,N\in\mathbb{N}$
\State {\bf output result} $S_{M,N}^{K}(f)(x,y)$ with the absolute error $\le\epsilon$\\ 
Compute 
$$D[M,N]:=\frac{1}{\sqrt{2}}\sum_{m=-M}^M\sum_{n=1}^N\frac{\gamma_{m,n}}{|J_{m+1}(z_{m,n})|}.
$$
\State Compute $
K[M,N]:=\max\{K_{m,n}:0\le m\le M,1\le n\le N\}.$
\State Compute $\beta_\epsilon:=\max\{2\epsilon^{-1}c_fD[M,N],K[M,N]\}$,
\State Find $K\in\mathbb{N}$ such that $K\ge\beta_\epsilon$, and let $L:=2K+1$.
\State Load the precomputed values $\mathbf{P}(x,y)\in\mathbb{C}^{(2M+1)\times N}$ and $\mathbf{H}\in\mathbb{C}^{L\times L\times(2M+1)\times N}$
\State Generate the square matrix $\mathbf{K}(x,y)\in\mathbb{C}^{L\times L}$ using 
\[
\mathbf{K}(x,y)_{\ell,l}:=\mathrm{tr}\left[\mathbf{H}(l,\ell,:,:)\mathbf{P}(x,y)^T\right],\hspace{1cm}\ {\rm for}\ 1\le l,\ell\le L.
\]
\State Generate the sampling grid $(x_i,x_j)$ with $x_i:=-1+(i-1)\delta$,  $\delta:=\frac{2}{L}$,  and $1\le i,j\le L$\\
Generate sampled values $\mathbf{F}:=(f(x_i,x_j))_{1\le i,j\le L}$ 	of the function $f:\mathbb{R}^2\to\mathbb{C}$.
\State Compute the DFT matrix $\widehat{\mathbf{F}}$
\State Compute 
$$S_{M,N}^{K}(f)(x,y):=\delta^2\mathrm{tr}\left[\mathbf{K}(x,y)\widehat{\mathbf{F}}\right].$$
\end{algorithmic} 
\label{iDFBT-FFTalg}
\end{algorithm}

\subsection{Numerical experiments} We here implement some experiments in MATLAB for functions supported on disks using the discussed matrix approach (\ref{SMNKxy.tr}).  

The matrix closed forms (\ref{MAT.DFBT-FFT}) and (\ref{SMNKxy.tr}) have practical advantages in numerical experiments and computations.  To begin with,  the matrix forms reduce multivariate summations in the right hand-side of (\ref{DFBT-FFT}) and (\ref{iDFBT-FFT}) into matrix trace and matrix multiplication which can be computed using fast matrix multiplications algorithms.  In addition,  the matrix forms (\ref{MAT.DFBT-FFT}) and (\ref{SMNKxy.tr}) involve the discrete Fourier transform (DFT) of sampled values matrix of the function which can be then performed using fast Fourier algorithms. 

To start with, we consider functions which are smoothly supported (absolutely/approximately) in disks.  Then we consider functions supported in disks constructed by zero-padding of another functions.  Next we consider characteristics/indicator functions of 2D bodies.

\newpage 
A large class of smooth functions which are absolutely zero on the boundary and outside of disks are finite linear combinations of normalized polar harmonics given by (\ref{2D.B.Fr.gp}).

\begin{example} Suppose $f:=\Psi_{1,2}+\Psi_{2,1}$. Then $f\in\mathcal{C}^\infty(\mathbb{R}^2)$ and $f$ is supported in $\mathbb{B}^\circ$. 
\begin{figure}[H]
\centering
\includegraphics[keepaspectratio=true,width=\textwidth, height=0.45\textheight]{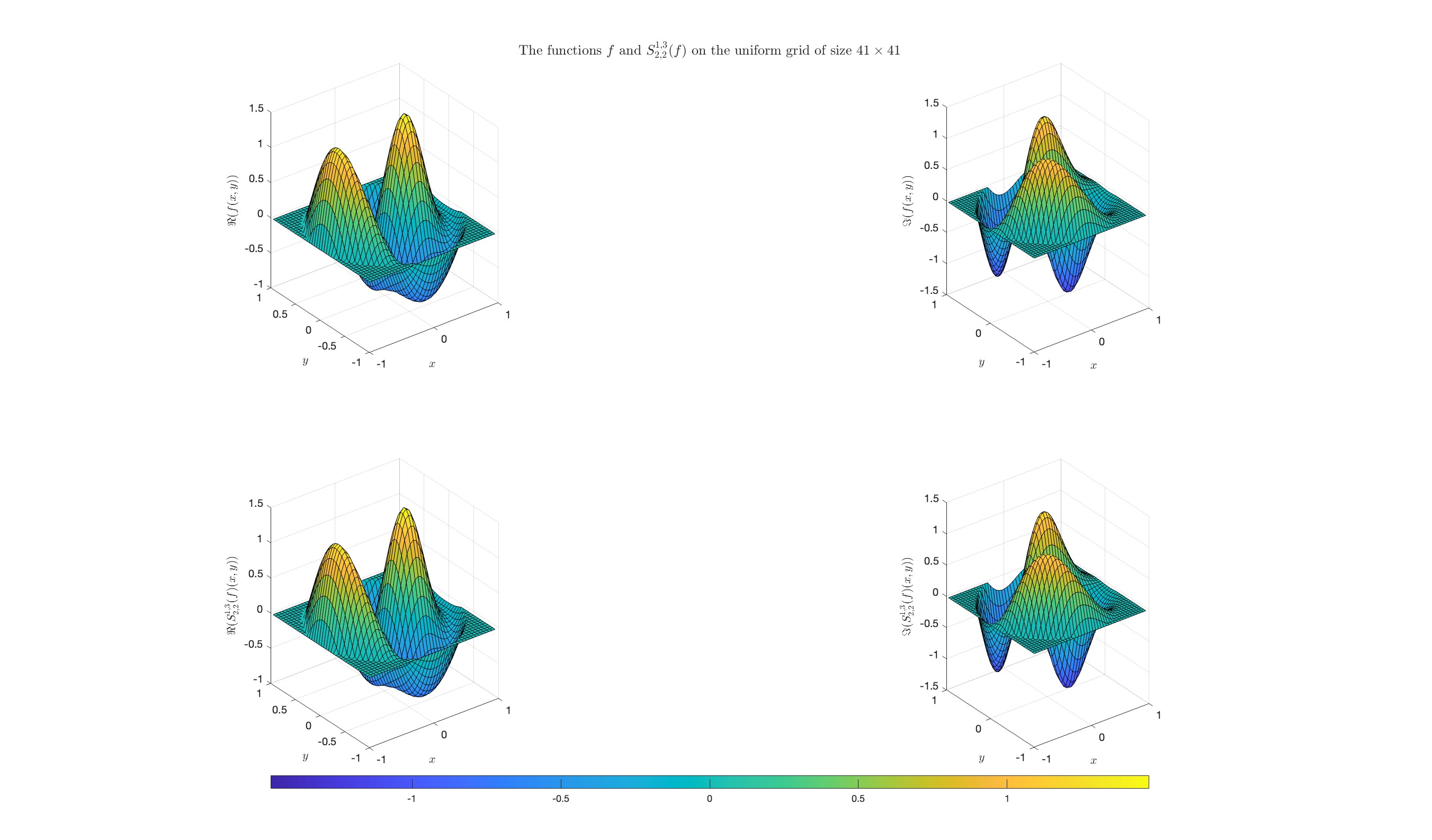}
\caption{The 3D surface plot of $f$ and $S_{2,2}^{3}(f)$ where $f:=\Psi_{1,2}+\Psi_{2,1}$ on the uniform grid of size $41\times 41$ of $\Omega$.  It is worth noting that $K[2,2]=3$}
\label{fig:P1221Surf}
\end{figure}
\begin{figure}[H]
\centering
\includegraphics[keepaspectratio=true,width=\textwidth, height=0.35\textheight]{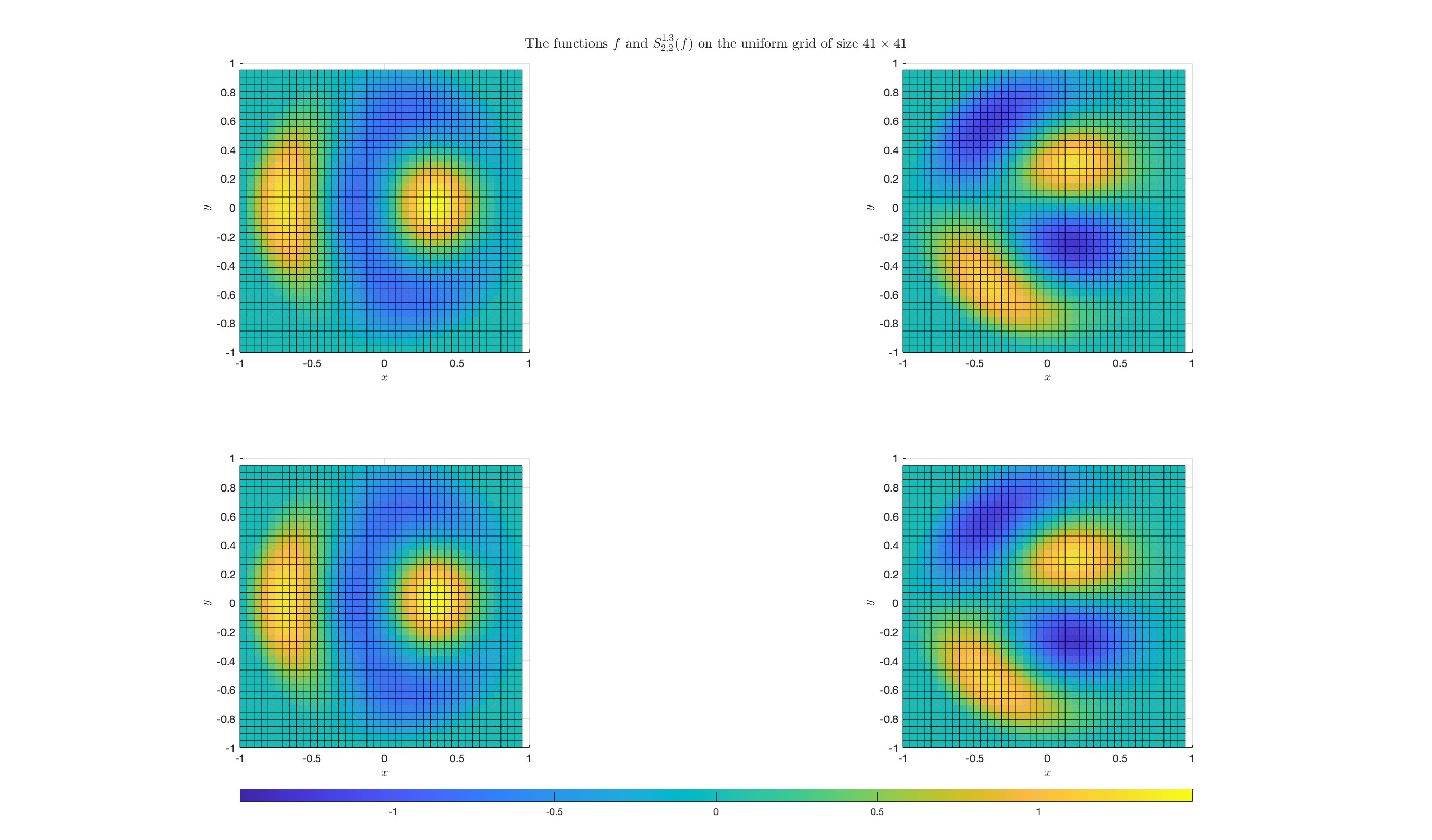}
\caption{The contour plot of $f$ and $S_{2,2}^{3}(f)$ on the uniform grid of size $41\times 41$ of $\Omega$.}
\label{fig:P1221Cont}
\end{figure}
\end{example}
Suppose that $\mathbf{A},\mathbf{B}\in M_2(\mathbb{R})$ are positive-definite.  Let $f_{\mathbf{A},\mathbf{B}}:\mathbb{R}^2\to\mathbb{C}$ be given by 
\begin{equation}\label{UAB}
f_{\mathbf{A},\mathbf{B}}(\mathbf{x}):=e^{-\mathbf{x}^T\mathbf{A}\mathbf{x}}+\ii e^{-\mathbf{x}^T\mathbf{B}\mathbf{x}}
\end{equation}
Another class of smooth functions which are approximately zero on the boundary and outside of disks are normalized 2D Gaussian of the form  (\ref{UAB}).    

\begin{example}
Let $\mathbf{A}:=\mathrm{diag}(0.1,0.05)^{-1}$ and $\mathbf{B}:=\mathrm{diag}(0.05,0.1)^{-1}$. Then $f_{\mathbf{A},\mathbf{B}}\in\mathcal{C}^\infty(\mathbb{R}^2)$ and it is approximately zero on the boundary and outside of the unit disk $\mathbb{B}$.

\begin{figure}[H]
\centering
\includegraphics[keepaspectratio=true,width=\textwidth, height=0.35\textheight]{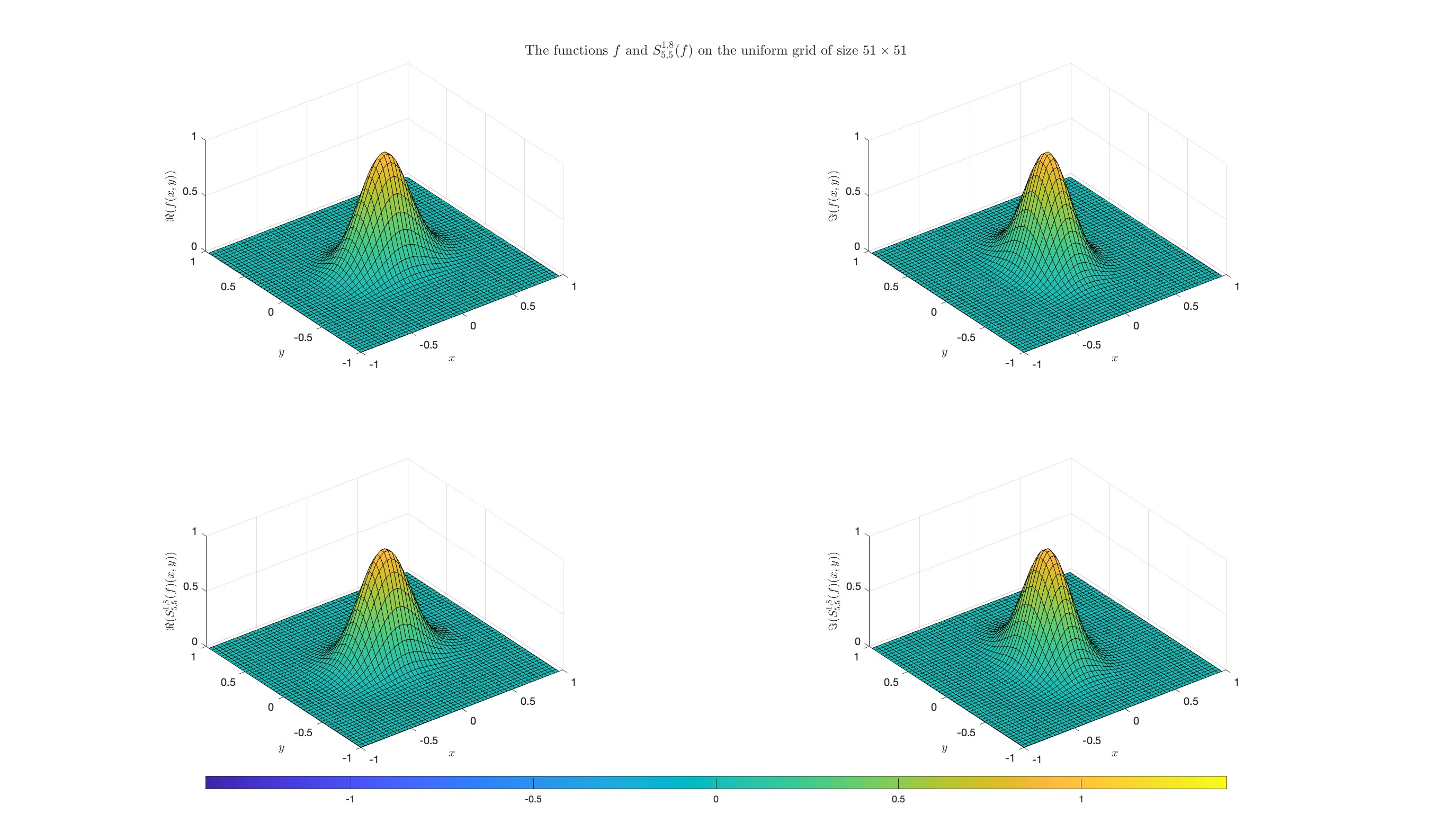}
\caption{The 3D surface plot of $f_{\mathbf{A},\mathbf{B}}$ and $S_{5,5}^{8}(f_{\mathbf{A},\mathbf{B}})$ on the uniform grid of size $51\times 51$.  It is worth noting that $K[5,5]=8$.}
\label{fig:UABSurf}
\end{figure}

\begin{figure}[H]
\centering
\includegraphics[keepaspectratio=true,width=\textwidth, height=0.35\textheight]{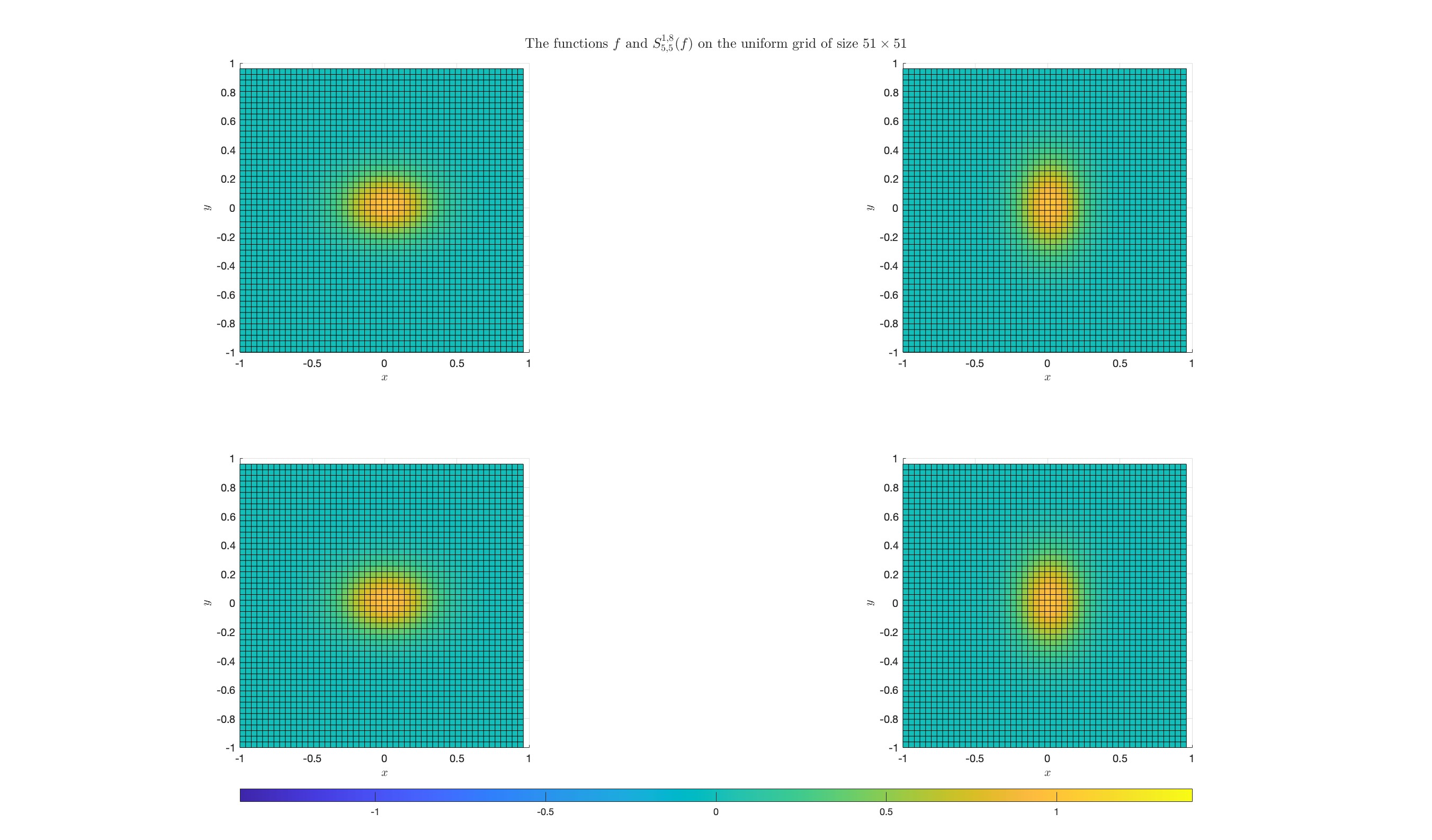}
\caption{The contour plot of $f_{\mathbf{A},\mathbf{B}}$ and $S_{5,5}^{8}(f_{\mathbf{A},\mathbf{B}})$ on the uniform grid of size $51\times 51$.}
\label{fig:UABCont}
\end{figure}
\end{example}

Next, we consider almost everywhere smooth functions which are absolutely supported in disks.  Since boundary of a disk has zero Lebesgue measure in $\mathbb{R}^2$,  such functions can be naturally obtained by zero-padding of smooth functions on $\mathbb{R}^2$ outside disks.

\begin{example}
Let $f:\mathbb{R}^2\to\mathbb{C}$ be given by 
\[
f(x,y):=\left\{\begin{array}{cc}
e^{-xy}+\ii\sin(xy) & (x,y)\in\mathbb{B} \\ 
0 & (x,y)\not\in\mathbb{B}
\end{array}\right.
\]
\begin{figure}[H]
\centering
\includegraphics[keepaspectratio=true,width=\textwidth, height=0.35\textheight]{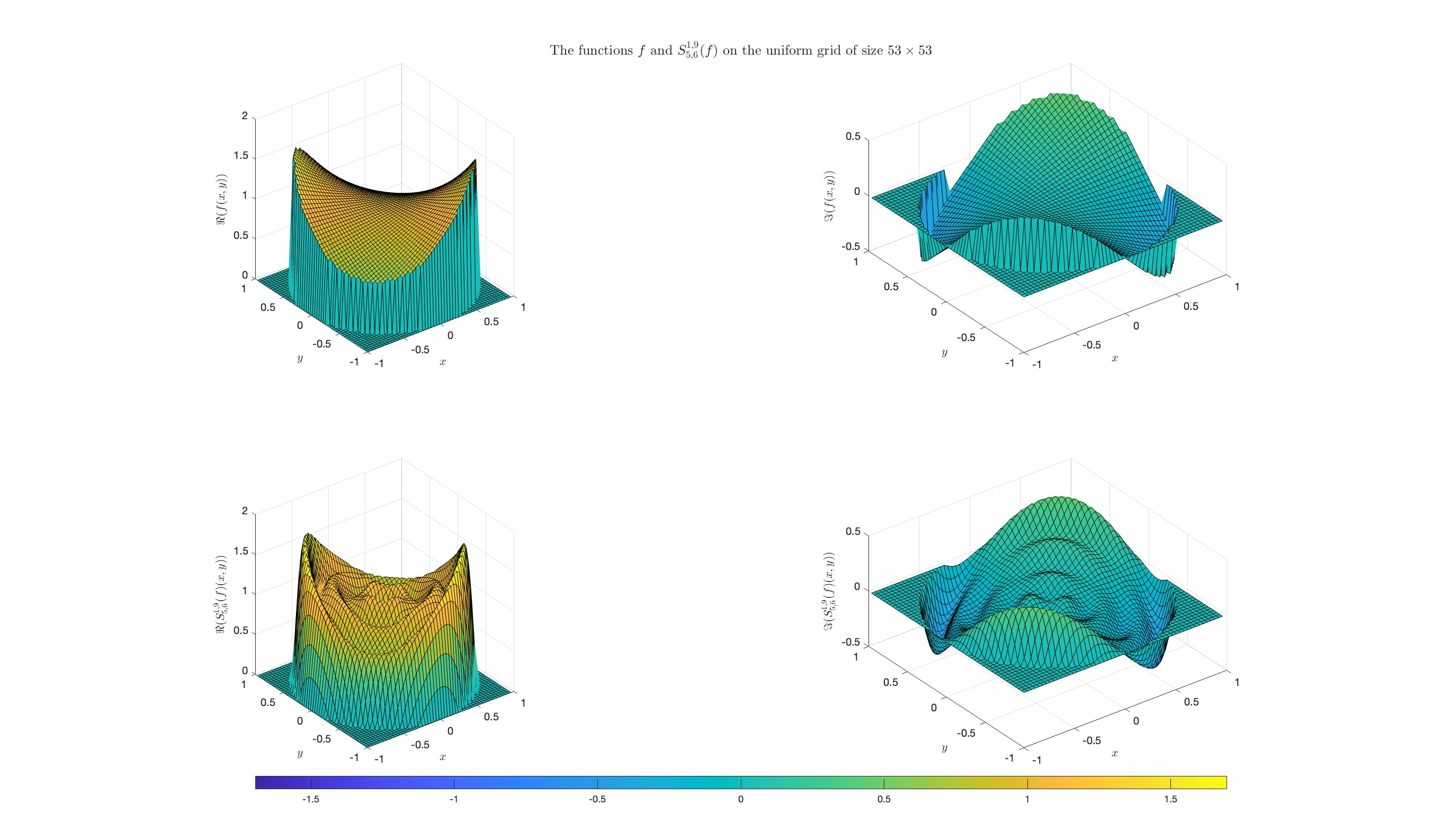}
\caption{The 3D surface plot of $f$ and $S_{5,6}^{9}(f)$ on the uniform grid of size $53\times 53$.  It is worth noting that $K[5,6]=9$.}
\label{fig:ExpSinSurf}
\end{figure}

\begin{figure}[H]
\centering
\includegraphics[keepaspectratio=true,width=\textwidth, height=0.35\textheight]{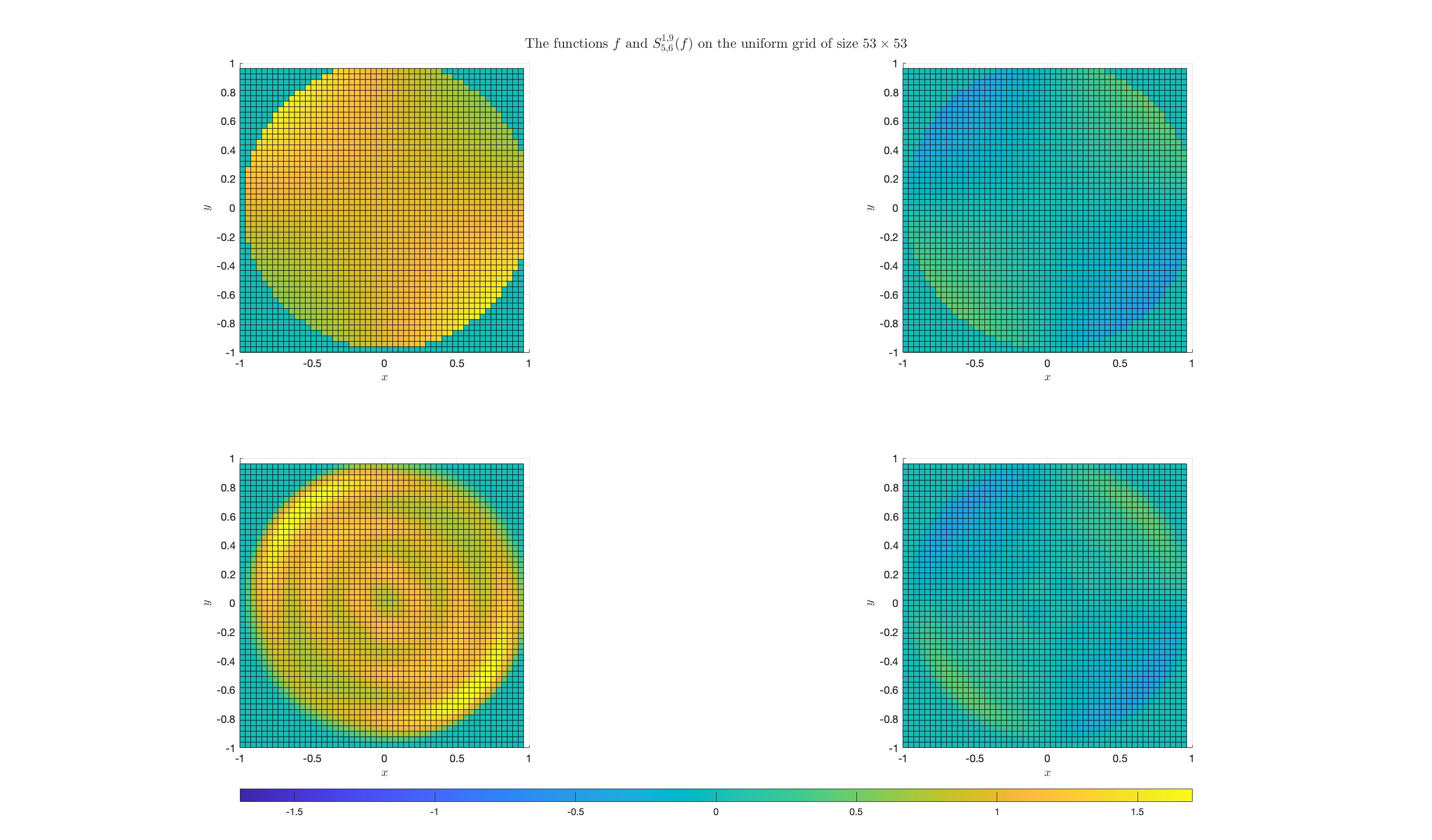}
\caption{The contour plot of $f$ and $S_{5,6}^{9}(f)$  on the uniform grid of size $53\times 53$.}
\label{fig:ExpSinCont}
\end{figure}
\end{example}

The characteristic (indicator) functions of bodies/shapes appear in geometric shape analysis.  Such functions can be canonically considered as the simplest examples of almost everywhere smooth functions supported in disks, if the shape is parametrized by a closed curve.
 
\begin{example}
For $a,b>0$, let $\Omega_{a,b}:=[-a,a]\times[-b,b]$.
Let $\chi_{a,b}$ be the characteristic function of the rectangle $\Omega_{a,b}$. That is 
\[
\chi_{a,b}(x,y):=\left\{\begin{array}{cc}
1 & (x,y)\in\Omega_{a,b} \\ 
0 & (x,y)\not\in\Omega_{a,b}
\end{array}\right.
\]
\begin{figure}[H]
\centering
\includegraphics[keepaspectratio=true,width=\textwidth, height=0.35\textheight]{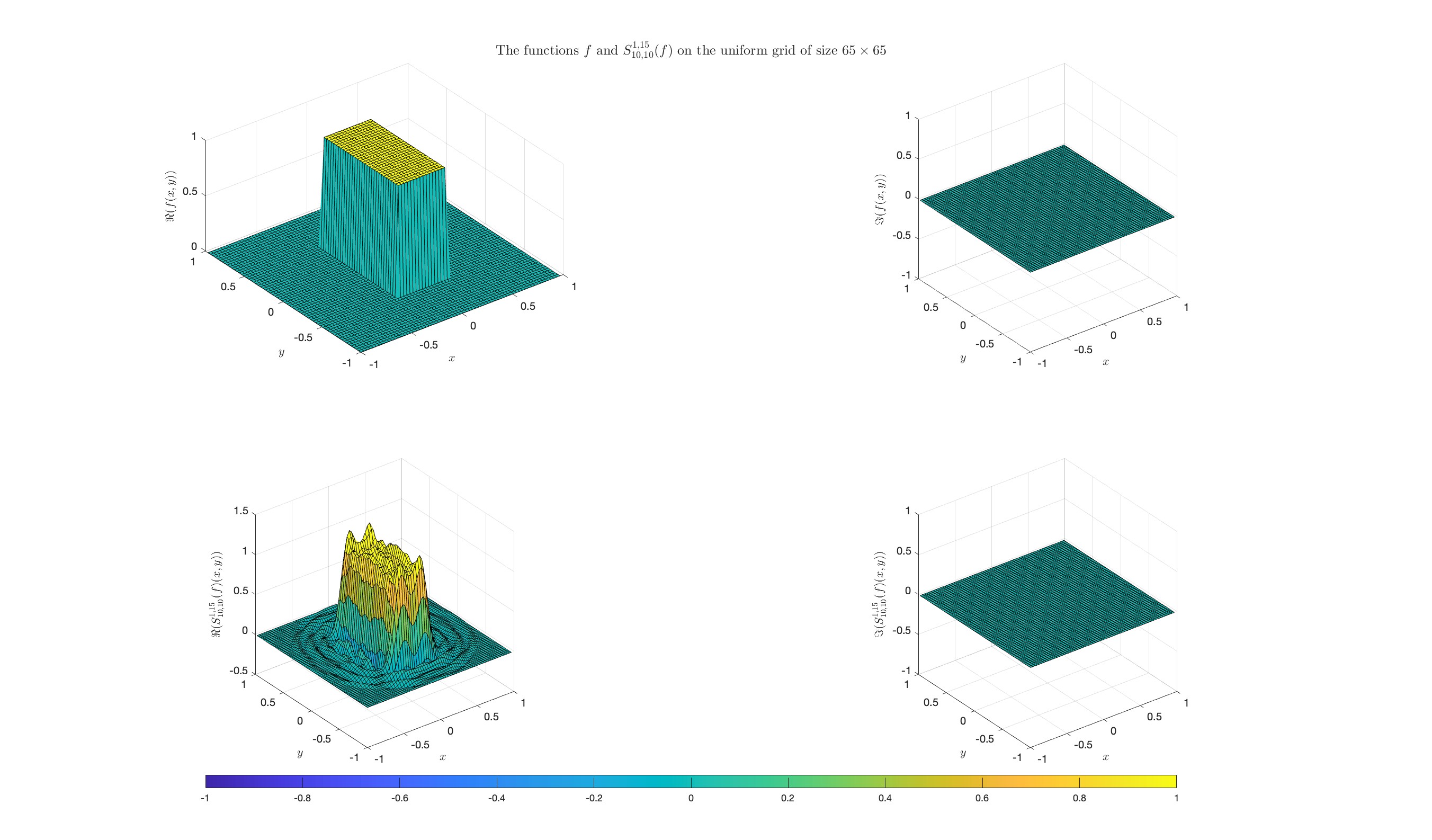}
\caption{The 3D surface plot of $f=\chi_{0.25,0.5}$ and $S_{10,10}^{15}(f)$ on the uniform grid of size $65\times 65$.  It is worth noting that $K[10,10]=15$.}
\label{fig:IOSurf}
\end{figure}

\begin{figure}[H]
\centering
\includegraphics[keepaspectratio=true,width=\textwidth, height=0.35\textheight]{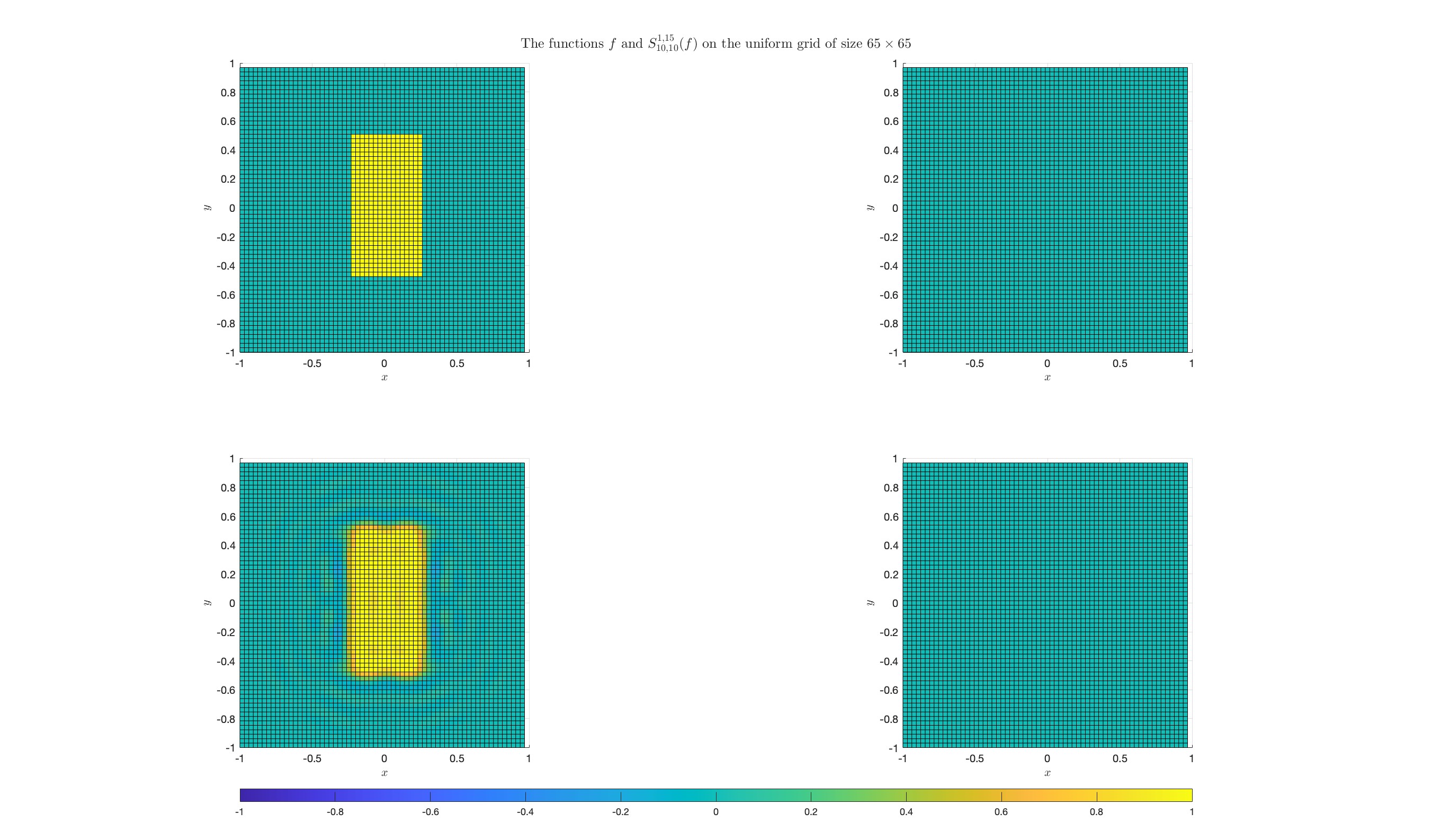}
\caption{The contour plot of $f=\chi_{0.25,0.5}$ and $S_{10,10}^{15}(f)$ on the grid of size $65\times 65$.}
\label{fig:IOCont}
\end{figure}
\end{example}

\newpage 

Next example considers a body inside a non-symmetric two dimensional polygon  given by some ordered sets of vertices. 

\begin{example}
Suppose that $f:=\chi_A$ where $A$ is the body inside the two dimensional polygon given by the set of ordered vertices 
$$\mathcal{V}:=\left\{(0,0), (0,3),(-3,3),(-3,0),(-2,0),(-2,-1),(-1,1),(-1,-2),(2,-2),(2,0)\right\}.$$ 

\begin{figure}[H]
\centering
\includegraphics[keepaspectratio=true,width=\textwidth, height=0.43\textheight]{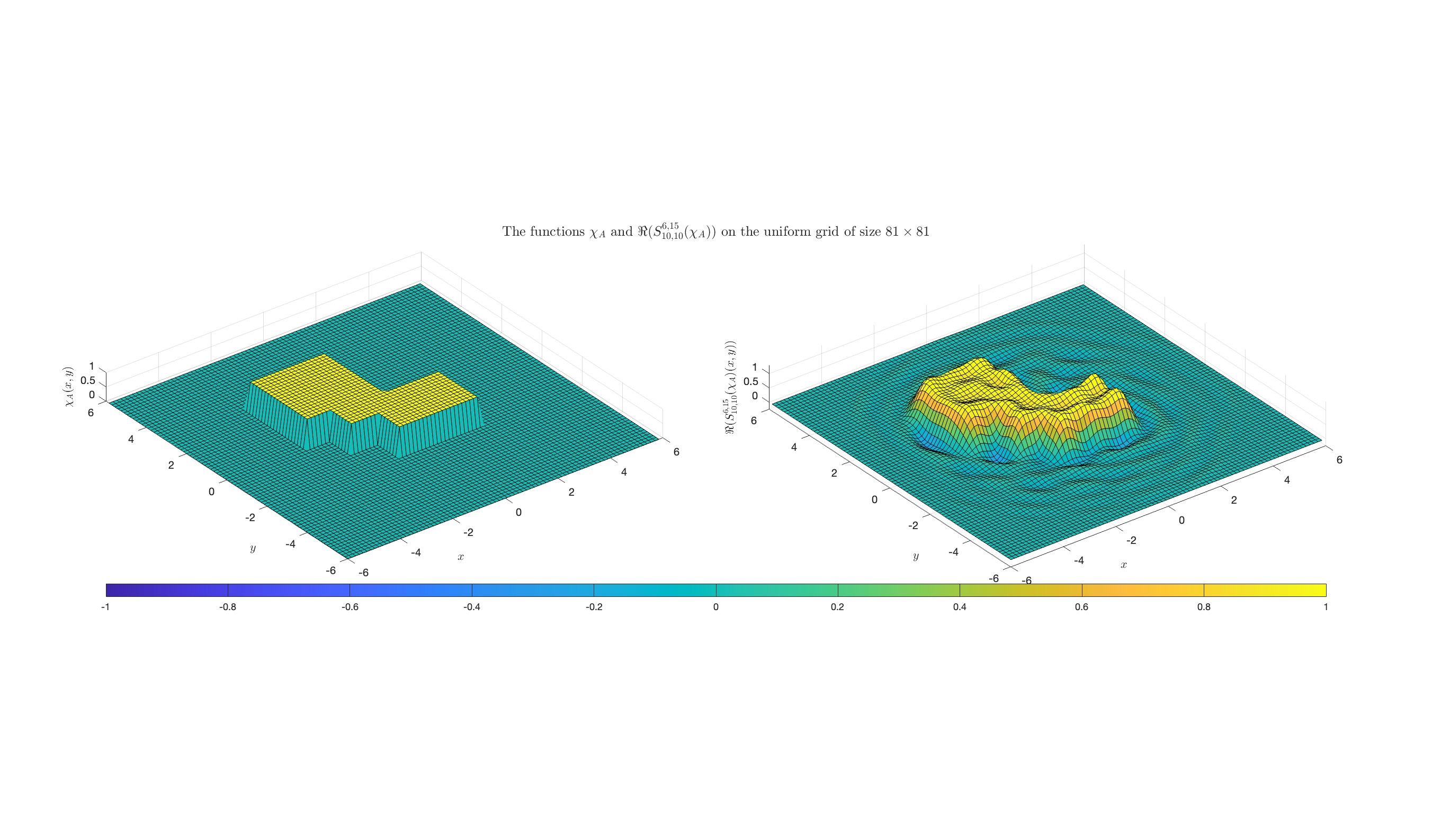}
\caption{The 3D surface plot of $f=\chi_{A}$ and $S_{10,10}^{6,15}(f)$ on the uniform grid of size $81\times 81$.  It is worth noting that $K[10,10]=15$.}
\label{fig:ILSurf}
\end{figure}

\begin{figure}[H]
\centering
\includegraphics[keepaspectratio=true,width=\textwidth, height=0.25\textheight]{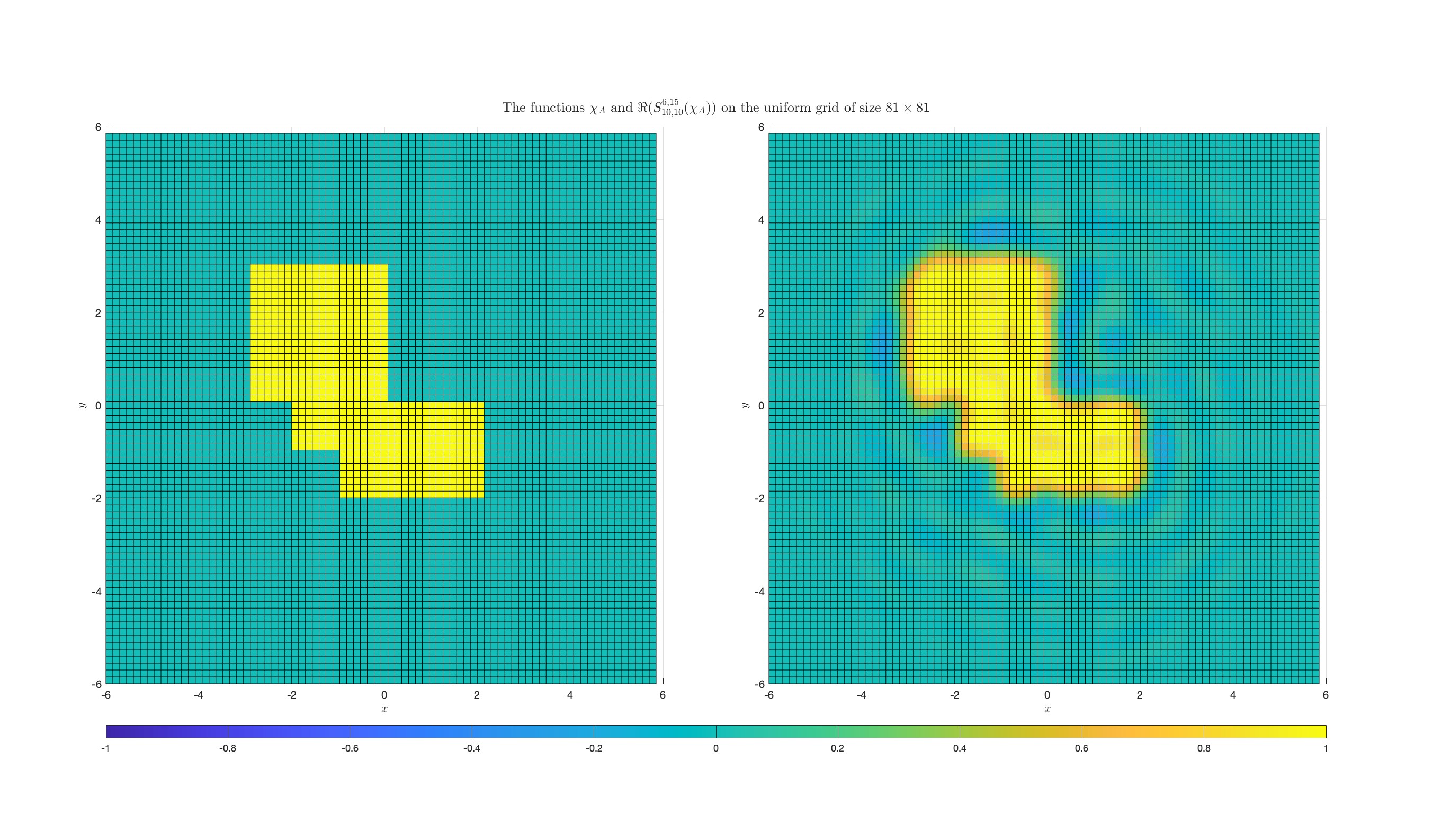}
\caption{The contour plot of $f=\chi_A$ and $S_{10,10}^{1,15}(f)$ on the uniform grid of size $81\times 81$.}
\label{fig:ILCont}
\end{figure}
\end{example}

\newpage
\begin{example}
Let $A$ be the body inside the astroid centred at origin associated to $p:=\frac{2}{3}$ metric, that is $A:=\{\mathbf{x}\in\mathbb{R}^2:\|\mathbf{x}\|_p\le 1\}$.  Suppose that $f:=\chi_A$. 
\begin{figure}[H]
\centering
\includegraphics[keepaspectratio=true,width=\textwidth, height=0.45\textheight]{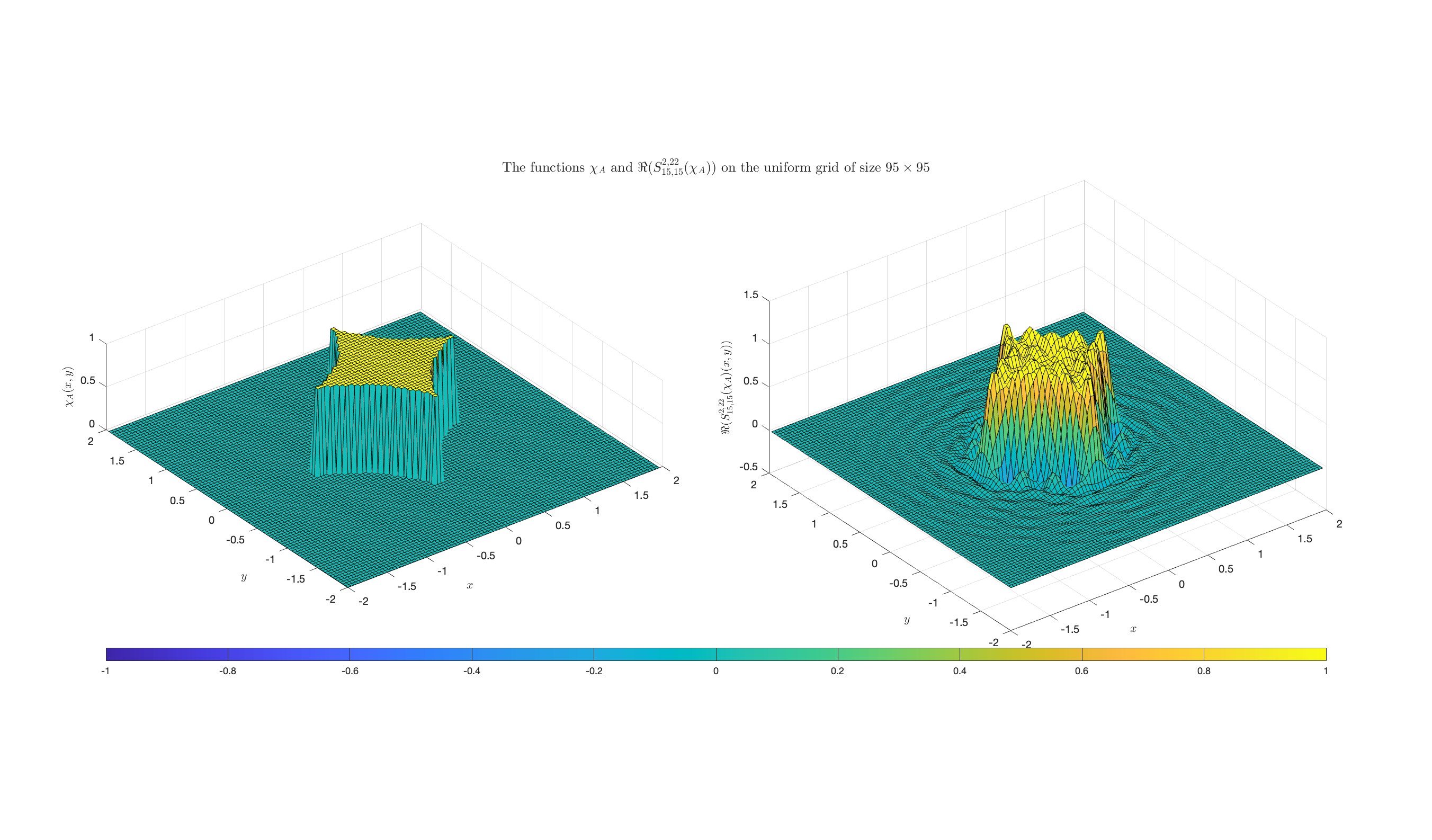}
\caption{The 3D surface plot of $f=\chi_{A}$ and $S_{15,15}^{2,22}(f)$ on the uniform grid of size $95\times 95$.  It is worth noting that $K[15,15]=22$.}
\label{fig:ChSurf}
\end{figure}
\begin{figure}[H]
\centering
\includegraphics[keepaspectratio=true,width=\textwidth, height=0.35\textheight]{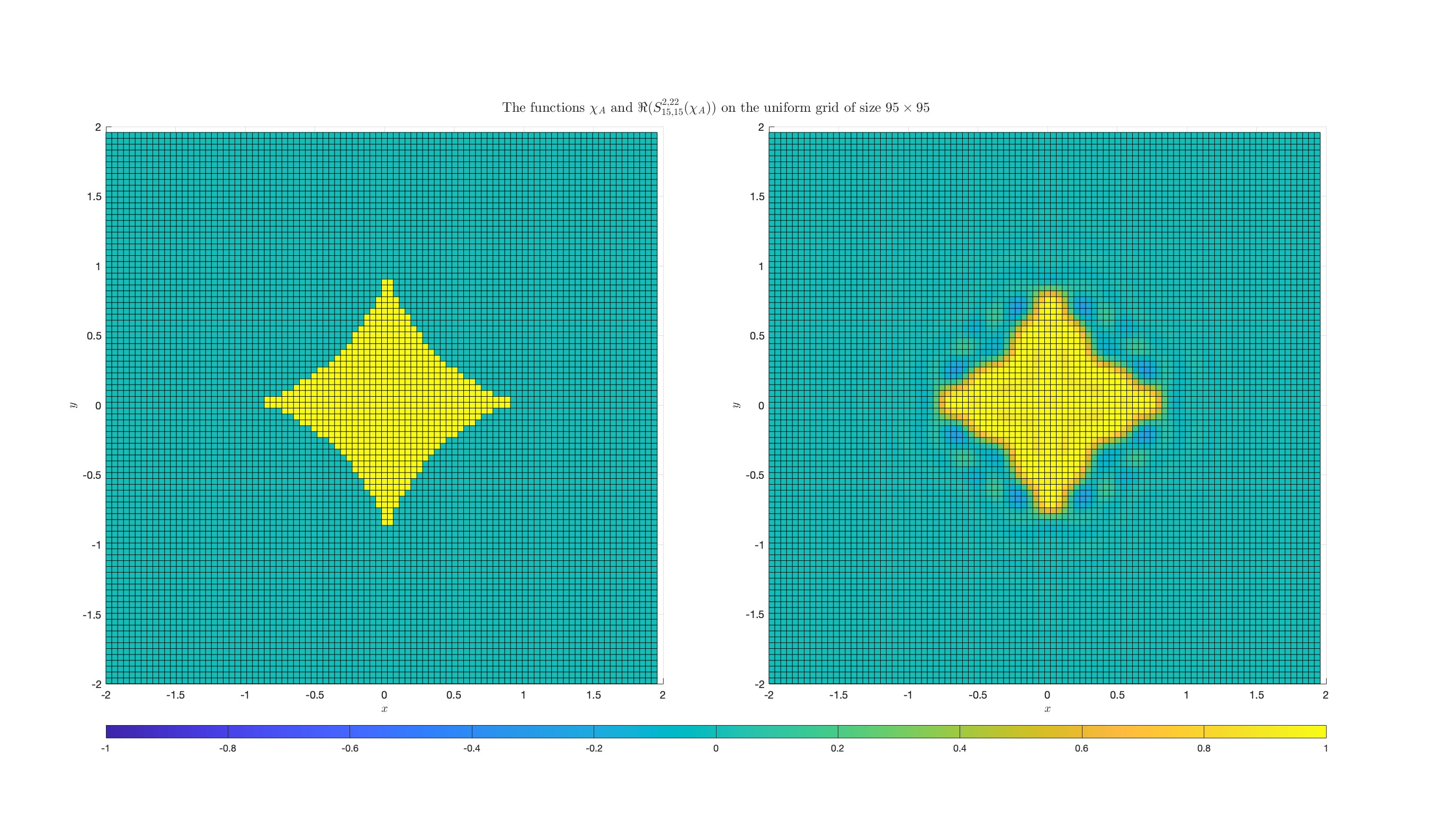}
\caption{The contour plot of $f$ and $S_{15,15}^{2,22}(f)$ on the uniform grid of size $95\times 95$.}
\label{fig:ChCont}
\end{figure}
\end{example}

\section{\bf  FFBT/iFFBT for Convolutions on Disks }

We here develop a unified finite Fourier-Bessel transform for convolutions  supported on disks.   

Let $f,g:\mathbb{R}^2\to\mathbb{C}$ be integrable functions supported in $\mathbb{B}_{1/2}$.  Then $f\ast g$ is supported in $\mathbb{B}$.  Assume $(m,n)\in\mathbb{Z}\times\mathbb{N}$.   Then FFBT of order $K\in\mathbb{N}$ of $C_{m,n}(f\ast g)$,  given by (\ref{DFBT-FFT}), reads as 
\[
C_{m,n}^{K}(f\ast g)=\sum_{\|\mathbf{k}\|_{\infty}\le K}\mathbf{c}(\mathbf{k};m,n)\widehat{f\ast g}(\mathbf{k};2K+1),
\]
with
\[
\widehat{f\ast g}(\mathbf{k};2K+1)=\delta^2\sum_{i=1}^{2K+1}\sum_{j=1}^{2K+1}f\ast g(x_i,x_j)e^{-\pi\ii (k_1x_i+k_2x_j)},
\]
where $x_j:=-1+(j-1)\delta$, $\delta:=\frac{2}{L}$, $L:=2K+1$,  and $\mathbf{c}(\mathbf{k};m,n)$ given by (\ref{cakmn}) for every $\mathbf{k}:=(k_1,k_2)^T$.  

The later requires values of the convolution integral $f\ast g$ on the uniform sampling grid of size $L\times L$.  This imposes several practical disadvantages/inflexibilities to the FFBT/iFFBT numerical algorithms when applied to convolutions.  To begin with,   it makes the algorithms computationally expensive.  In addition,  the numerical scheme is less flexible/attractive to problems which the ultimate goal is computing/approximating values of the convolutions. 

To improve this,  we develop the unified FFBT for convolution of functions $f,g\in\mathcal{V}_{1/2}$,  denoted by $C_{m,n}^{K}[f,g]$,  as an accurate and numerical approximation of $C_{m,n}^{K}(f\ast g)$ which can be computed independent of the values of the convolution integral itself.  The guiding idea is to approximate the finite Fourier transform $
\widehat{f\ast g}(\mathbf{k};L),$
with the multiplication of finite Fourier transforms 
$\widehat{f}(\mathbf{k};L)\widehat{g}(\mathbf{k};L)$.

Initially,  we prove that multiplication of finite Fourier transform of functions can be considered as an accurate approximation for finite Fourier transform of their convolutions. 

\begin{proposition}\label{fftFGfftFfftG}
{\it Let $f,g\in\mathcal{C}^1(\mathbb{R}^2)$ be supported in $\mathbb{B}_{1/2}^\circ$.  Suppose $\mathbf{k}\in\mathbb{Z}^2$ and $K\in\mathbb{N}$.  Then there exists a universal constant $\kappa^\ast>0$ and constant $c_{f,g}>0$ such that 
\[
\left|\widehat{f\ast g}(\mathbf{k};2K+1)-\widehat{f}(\mathbf{k};2K+1)\widehat{g}(\mathbf{k};2K+1)\right|\le\frac{\kappa^\ast c_{f,g}}{K},\hspace{1cm}{\rm if}\ \|\mathbf{k}\|_\infty\le K.
\]
}\end{proposition}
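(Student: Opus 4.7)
The strategy is to use the exact convolution identity $\widehat{f\ast g}(\mathbf{k})=\widehat{f}(\mathbf{k})\widehat{g}(\mathbf{k})$ from (\ref{Ff*g}) as a bridge between $\widehat{f\ast g}(\mathbf{k};2K+1)$ and $\widehat{f}(\mathbf{k};2K+1)\widehat{g}(\mathbf{k};2K+1)$, and then control each resulting term via Proposition \ref{mainXIb}. Writing $L:=2K+1$, I would start from the triangle inequality
\[
\bigl|\widehat{f\ast g}(\mathbf{k};L)-\widehat{f}(\mathbf{k};L)\widehat{g}(\mathbf{k};L)\bigr|\le\bigl|\widehat{f\ast g}(\mathbf{k};L)-\widehat{f\ast g}(\mathbf{k})\bigr|+\bigl|\widehat{f}(\mathbf{k})\widehat{g}(\mathbf{k})-\widehat{f}(\mathbf{k};L)\widehat{g}(\mathbf{k};L)\bigr|,
\]
and then split the second quantity as
\[
\bigl|\widehat{f}(\mathbf{k})\widehat{g}(\mathbf{k})-\widehat{f}(\mathbf{k};L)\widehat{g}(\mathbf{k};L)\bigr|\le|\widehat{g}(\mathbf{k})|\bigl|\widehat{f}(\mathbf{k})-\widehat{f}(\mathbf{k};L)\bigr|+|\widehat{f}(\mathbf{k};L)|\bigl|\widehat{g}(\mathbf{k})-\widehat{g}(\mathbf{k};L)\bigr|.
\]

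\textbf{Applying Proposition \ref{mainXIb}.} Since $f,g$ are supported in $\mathbb{B}_{1/2}^\circ\subseteq\mathbb{B}^\circ$ and lie in $\mathcal{C}^1(\mathbb{R}^2)$, the estimate $|\widehat{h}(\mathbf{k})-\widehat{h}(\mathbf{k};L)|\le 96\|\nabla h\|_\infty/(\pi K)$ applies directly with $h=f$ and with $h=g$. For the first term, I would verify that $h=f\ast g$ itself is eligible: because $\mathrm{supp}(f)+\mathrm{supp}(g)\subseteq\mathbb{B}_{1/2}+\mathbb{B}_{1/2}\subseteq\mathbb{B}$, the convolution $f\ast g$ is supported in $\mathbb{B}^\circ$ (the open containment coming from the strict inclusion $\mathrm{supp}(f),\mathrm{supp}(g)\subsetneq\mathbb{B}_{1/2}$ induced by the open-disk hypothesis, using compactness of the supports). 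Moreover $f\ast g\in\mathcal{C}^1(\mathbb{R}^2)$ with $\nabla(f\ast g)=(\nabla f)\ast g$, so Proposition \ref{mainXIb} applies again.

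\textbf{Bounding the auxiliary factors.} I would control the prefactors by elementary estimates: $|\widehat{g}(\mathbf{k})|\le\|g\|_{L^1}\le\frac{\pi}{4}\|g\|_\infty$ using (\ref{Ff}) and the support in $\mathbb{B}_{1/2}$, while from the definition (\ref{Xi.kL}), $|\widehat{f}(\mathbf{k};L)|\le\delta^2L^2\|f\|_\infty=4\|f\|_\infty$ (since $\delta=2/L$). For $\|\nabla(f\ast g)\|_\infty$ I would use Young's inequality: $\|\nabla(f\ast g)\|_\infty=\|(\nabla f)\ast g\|_\infty\le\|\nabla f\|_\infty\|g\|_{L^1}\le\frac{\pi}{4}\|\nabla f\|_\infty\|g\|_\infty$. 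Assembling these into the triangle-inequality decomposition yields
\[
\bigl|\widehat{f\ast g}(\mathbf{k};L)-\widehat{f}(\mathbf{k};L)\widehat{g}(\mathbf{k};L)\bigr|\le\frac{96}{\pi K}\Bigl(\tfrac{\pi}{4}\|\nabla f\|_\infty\|g\|_\infty+\tfrac{\pi}{4}\|\nabla f\|_\infty\|g\|_\infty+4\|f\|_\infty\|\nabla g\|_\infty\Bigr),
\]
and I can take $\kappa^\ast:=96/\pi$ together with $c_{f,g}:=\tfrac{\pi}{2}\|\nabla f\|_\infty\|g\|_\infty+4\|f\|_\infty\|\nabla g\|_\infty$ (or a symmetric variant).

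\textbf{Expected obstacle.} The routine steps are the triangle-type decompositions and the crude $\|\cdot\|_\infty$-bounds on the trigonometric sums. The only place that needs genuine care is the justification that $f\ast g$ itself satisfies the hypotheses of Proposition \ref{mainXIb}, i.e.\ that it is $\mathcal{C}^1$, has its support strictly inside $\mathbb{B}$, and has a gradient that can be uniformly bounded in terms of $f$ and $g$. This reduces to the standard fact $\nabla(f\ast g)=(\nabla f)\ast g$ (valid when $f\in\mathcal{C}^1$ has compact support) combined with the containment of supports, after which Young's inequality closes the argument.
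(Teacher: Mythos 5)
Your proposal is correct and follows essentially the same route as the paper: insert the exact identity $\widehat{f\ast g}(\mathbf{k})=\widehat{f}(\mathbf{k})\widehat{g}(\mathbf{k})$ via the triangle inequality, apply Proposition \ref{mainXIb} to $f$, $g$, and $f\ast g$ (the latter justified by $f\ast g\in\mathcal{C}^1$ with $\nabla(f\ast g)$ bounded by Young's inequality and support in $\mathbb{B}^\circ$), and bound the prefactors $|\widehat{\cdot}(\mathbf{k})|$ and $|\widehat{\cdot}(\mathbf{k};L)|$ by $\|\cdot\|_1$ and $4\|\cdot\|_\infty$ respectively. The only differences are cosmetic — you split the product term symmetrically to the paper and make the constants slightly more explicit.
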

\begin{proof}
Let $c_{f,g}:=\max\{\|f\|_1\|\nabla g\|_\infty,4\|g\|_\infty\|\nabla f\|_\infty\}$,   $L:=2K+1$ and $\|\mathbf{k}\|_\infty\le K$. 
Proposition \ref{mainXIb} gets 
 \begin{align*}
&\left|\widehat{f}(\mathbf{k})\widehat{g}(\mathbf{k})-\widehat{f}(\mathbf{k};L)\widehat{g}(\mathbf{k};L)\right|
\le\left|\widehat{f}(\mathbf{k})\widehat{g}(\mathbf{k})-\widehat{f}(\mathbf{k})\widehat{g}(\mathbf{k};L)\right|
 +\left|\widehat{f}(\mathbf{k})\widehat{g}(\mathbf{k};L)-\widehat{f}(\mathbf{k};L)\widehat{g}(\mathbf{k};L)\right|
\\&=|\widehat{f}(\mathbf{k})||\widehat{g}(\mathbf{k})-\widehat{g}(\mathbf{k};L)|
+|\widehat{g}(\mathbf{k};L)||\widehat{f}(\mathbf{k})-\widehat{f}(\mathbf{k};L)|
\le \|f\|_1\left|\widehat{g}(\mathbf{k})-\widehat{g}(\mathbf{k};L)\right|
+4\|g\|_\infty|\widehat{f}(\mathbf{k})-\widehat{f}(\mathbf{k};L)|.
\end{align*}
Proposition 8.10 of \cite{Foll.R} implies that $f\ast g\in \mathcal{C}^1(\mathbb{R}^2)$ with $\|\nabla(f\ast g)\|_\infty\le \|f\|_1\|\nabla g\|_\infty$.
So,  we obtain  
\begin{align*}
&\left|\widehat{f\ast g}(\mathbf{k};L)-\widehat{f}(\mathbf{k};L)\widehat{g}(\mathbf{k};L)\right|
\le \left|\widehat{f\ast g}(\mathbf{k};L)-\widehat{f\ast g}(\mathbf{k})\right|+\left|\widehat{f\ast g}(\mathbf{k})-\widehat{f}(\mathbf{k};L)\widehat{g}(\mathbf{k};L)\right|
\\&\le \frac{\kappa\|\nabla(f\ast g)\|_\infty}{K}+\left|\widehat{f}(\mathbf{k})\widehat{g}(\mathbf{k})-\widehat{f}(\mathbf{k};L)\widehat{g}(\mathbf{k};L)\right|
\le\frac{\kappa \|f\|_1\|\nabla g\|_\infty}{K}+\frac{2\kappa c_{f,g}}{K}\le\frac{3\kappa c_{f,g}}{K}.
\end{align*}
\end{proof}
\begin{remark}
In terms of approximation theory,  Proposition \ref{fftFGfftFfftG},  reads as  
\[
\left|\widehat{f\ast g}(\mathbf{k};2K+1)-\widehat{f}(\mathbf{k};2K+1)\widehat{g}(\mathbf{k};2K+1)\right|=\mathcal{O}(1/K),
\hspace{1cm}{\rm if}\ \|\mathbf{k}\|_\infty\le K.
\]
\end{remark}
\subsection{Finite Fourier-Bessel transform of convolutions}
Throughout, we discuss the unified finite Fourier-Bessel transform (FFBT) of compactly supported convolutions on disks using finite Fourier transform.
Suppose that $f,g:\mathbb{R}^2\to\mathbb{C}$ are functions supported in $\mathbb{B}_{1/2}$.
We then define  
\begin{equation}\label{CmnKFG}
C_{m,n}^{K}[f,g]:=\sum_{\|\mathbf{k}\|_\infty\le K}\mathbf{c}(\mathbf{k};m,n)\widehat{f}(\mathbf{k};2K+1)\widehat{g}(\mathbf{k};2K+1),
\end{equation}
where $\mathbf{c}(\mathbf{k};m,n)$ given by (\ref{cakmn}) and $\widehat{f}(\mathbf{k};2K+1)$ (resp.  $\widehat{g}(\mathbf{k};2K+1)$) is given by (\ref{Xi.kL}), for $\mathbf{k}:=(k_1,k_2)^T$.

Next theorem shows that $C_{m,n}^{K}[f,g]$ is an accurate approximation of $C_{m,n}(f\ast g)$ if $K\ge K_{m,n}$.
\begin{theorem}\label{CmnKFG-F*G}
Let $(m,n)\in\mathbb{Z}\times\mathbb{N}$ and $f,g\in\mathcal{C}^1(\mathbb{R}^2)$ be supported in $\mathbb{B}_{1/2}^\circ$.  Then there exist  constants  $\gamma_{m,n},d_{f,g}>0$ such that 
\[
\left|C_{m,n}^{K}[f,g]-C_{m,n}(f\ast g)\right|\le\frac{2\gamma_{m,n}d_{f,g}}{K},\hspace{1.5cm}{\rm for}\ K\ge K_{m,n}.
\]
\end{theorem}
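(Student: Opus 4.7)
The plan is to apply the triangle inequality
\[
\left|C_{m,n}^{K}[f,g]-C_{m,n}(f\ast g)\right|\le\left|C_{m,n}^{K}[f,g]-C_{m,n}^{K}(f\ast g)\right|+\left|C_{m,n}^{K}(f\ast g)-C_{m,n}(f\ast g)\right|,
\]
and bound the two resulting pieces using the tools already developed. The second term is handled directly by Theorem \ref{CmnKerr} applied to $h:=f\ast g$: since $f,g\in\mathcal{C}^1$ with supports in $\mathbb{B}_{1/2}^\circ$, the convolution $h$ is supported in $\mathbb{B}^\circ$ and belongs to $\mathcal{C}^1(\mathbb{R}^2)\cap\mathcal{A}$ (using Proposition 8.10 of \cite{Foll.R} for $\mathcal{C}^1$ and the compact support plus differentiability to ensure the absolute convergence of $\widehat{h}$ needed for membership in $\mathcal{A}$). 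Consequently this term is bounded by $2\gamma_{m,n}c_h/K$ for $K\ge K_{m,n}$.

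For the first term I would substitute the definitions (\ref{DFBT-FFT}) and (\ref{CmnKFG}) into the difference to obtain
\[
\left|C_{m,n}^{K}[f,g]-C_{m,n}^{K}(f\ast g)\right|\le\sum_{\|\mathbf{k}\|_\infty\le K}|\mathbf{c}(\mathbf{k};m,n)|\left|\widehat{f}(\mathbf{k};2K+1)\widehat{g}(\mathbf{k};2K+1)-\widehat{f\ast g}(\mathbf{k};2K+1)\right|.
\]
Each inner factor is controlled by Proposition \ref{fftFGfftFfftG}, yielding a uniform bound $\kappa^\ast c_{f,g}/K$ for all $\|\mathbf{k}\|_\infty\le K$. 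The remaining sum $\sum_{\|\mathbf{k}\|_\infty\le K}|\mathbf{c}(\mathbf{k};m,n)|$ is dominated by $\beta_{m,n}:=\sum_{\mathbf{k}\in\mathbb{Z}^2}|\mathbf{c}(\mathbf{k};m,n)|$, which is finite by Lemma \ref{SumJmk} as already invoked in the proof of Theorem \ref{CmnKerr}. Thus the first piece is at most $\kappa^\ast c_{f,g}\beta_{m,n}/K$.

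Combining the two $\mathcal{O}(1/K)$ bounds gives an estimate of the form $(\kappa^\ast c_{f,g}\beta_{m,n}+2\gamma_{m,n}c_{f\ast g})/K$, and absorbing the function-dependent factors into a constant $d_{f,g}:=\max\{\kappa^\ast c_{f,g},c_{f\ast g}\}$ and the $(m,n)$-dependent factors into $\gamma_{m,n}:=\max\{\beta_{m,n},\gamma_{m,n}^{(0)}\}$ (where $\gamma_{m,n}^{(0)}$ is the constant appearing in Theorem \ref{CmnKerr}) yields the claimed bound $2\gamma_{m,n}d_{f,g}/K$ for $K\ge K_{m,n}$. The main technical nuisance I anticipate is tracking the regularity of $f\ast g$ carefully enough to justify applying Theorem \ref{CmnKerr} to it, in particular confirming that $f\ast g\in\mathcal{A}$ so that (\ref{C.ZBVC}) is in force; everything else is a straightforward repackaging of Proposition \ref{fftFGfftFfftG} and Theorem \ref{CmnKerr}.
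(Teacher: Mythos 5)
Your proposal is correct and follows essentially the same route as the paper: the same triangle-inequality split into $|C_{m,n}^{K}[f,g]-C_{m,n}^{K}(f\ast g)|$ plus $|C_{m,n}^{K}(f\ast g)-C_{m,n}(f\ast g)|$, with the first piece bounded via Proposition \ref{fftFGfftFfftG} and the summability of $\mathbf{c}(\mathbf{k};m,n)$ from Lemma \ref{SumJmk}, and the second via Theorem \ref{CmnKerr} applied to $f\ast g$. The one point you flag --- verifying $f\ast g\in\mathcal{A}$, which the paper passes over silently --- is settled by Cauchy--Schwarz, since $\widehat{f\ast g}(\mathbf{k})=\widehat{f}(\mathbf{k})\widehat{g}(\mathbf{k})$ and both factors are square-summable by Parseval.
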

\begin{proof}
Let $L:=2K+1$ and $\mathbf{c}(\mathbf{k}):=\mathbf{c}(\mathbf{k};m,n)$.  Invoking Proposition \ref{fftFGfftFfftG},  we obtain 
\begin{align*}
&\left|C_{m,n}^{K}[f,g]-C_{m,n}^{K}(f\ast g)\right|
=\left|\sum_{\|\mathbf{k}\|_\infty\le K}\mathbf{c}(\mathbf{k})\left(\widehat{f}(\mathbf{k};L)\widehat{g}(\mathbf{k};L)-\widehat{f\ast g}(\mathbf{k};L)\right)\right|
\\&\le\sum_{\|\mathbf{k}\|_\infty\le K}|\mathbf{c}(\mathbf{k})||\widehat{f}(\mathbf{k};L)\widehat{g}(\mathbf{k};L)-\widehat{f\ast g}(\mathbf{k};L)|
\le\frac{\kappa^* c_{f,g}}{K}\sum_{\|\mathbf{k}\|_\infty\le K}|\mathbf{c}(\mathbf{k})|\le \frac{\kappa^*c_{f,g}\gamma_{m,n}}{K}.
\end{align*}
Suppose $d_{f,g}:=\max\{\kappa^*c_{f,g},2c_{f\ast g}\}$ and $K\ge K_{m,n}$.  Then applying Theorem \ref{CmnKerr} for $f\ast g$, we obtain 
\begin{align*}
\left|C_{m,n}^{K}[f,g]-C_{m,n}(f\ast g)\right|
&\le \left|C_{m,n}^{K}[f,g]-C_{m,n}^K(f\ast g)\right|+\left|C_{m,n}^K(f\ast g)-C_{m,n}(f\ast g)\right|
\le\frac{2\gamma_{m,n}d_{f,g}}{K}.
\end{align*}
\end{proof}
\begin{corollary}
{\it Let $(m,n)\in\mathbb{Z}\times\mathbb{N}$ and $f,g\in\mathcal{C}^1(\mathbb{R}^2)$ be supported in $\mathbb{B}_{1/2}^\circ$.  Then 
\[
\left|C_{m,n}^{K}[f,g]-C_{m,n}(f\ast g)\right|=\mathcal{O}(1/K),
\hspace{1.5cm}{\rm for}\ K\ge K_{m,n}.
\]
}\end{corollary}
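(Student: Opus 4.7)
The plan is to apply Theorem \ref{CmnKFG-F*G} directly, since the hypotheses of this corollary coincide verbatim with those of the theorem. First I would invoke the theorem, which supplies positive constants $\gamma_{m,n}$ (depending only on the indices $m,n$) and $d_{f,g}$ (depending only on the pair of functions) such that
\[
\left|C_{m,n}^{K}[f,g]-C_{m,n}(f\ast g)\right|\le\frac{2\gamma_{m,n}d_{f,g}}{K}
\]
for every integer $K\ge K_{m,n}$.

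Next I would observe that the product $2\gamma_{m,n}d_{f,g}$ is independent of $K$. Specifically, $\gamma_{m,n}=\max\{\alpha_{m,n},\beta_{m,n}\}$ is determined entirely by the fixed indices $m,n$ via the coefficient formula (\ref{cakmn}) and Lemma \ref{SumJmk}, while $d_{f,g}=\max\{\kappa^{\ast}c_{f,g},2c_{f\ast g}\}$ is controlled by $\|f\|_1$, $\|g\|_\infty$, $\|\nabla f\|_\infty$, $\|\nabla g\|_\infty$ and the constants associated with $f\ast g$. All of these quantities are finite under the standing hypotheses: compact support inside $\mathbb{B}_{1/2}^\circ$ together with $\mathcal{C}^1$ regularity of $f$ and $g$ make the four norms finite, and $f\ast g\in\mathcal{C}^1(\mathbb{R}^2)\cap\mathcal{A}$ (compactly supported in $\mathbb{B}^\circ$) by Proposition 8.10 of \cite{Foll.R}, which is exactly what Theorem \ref{CmnKerr} requires in order to produce $c_{f\ast g}$.

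Setting $M_{m,n}(f,g):=2\gamma_{m,n}d_{f,g}$, the bound reads $\left|C_{m,n}^{K}[f,g]-C_{m,n}(f\ast g)\right|\le M_{m,n}(f,g)/K$ for every $K\ge K_{m,n}$, which is precisely the $\mathcal{O}(1/K)$ assertion in the sense defined at the start of Section 2. There is no substantive obstacle here, since all the analytic work (the summability of $\mathbf{c}(\mathbf{k};m,n)$, the rate $\|\mathbf{k}\|_\infty$-uniform error of the finite Fourier transform, and the product approximation $\widehat{f}(\mathbf{k};L)\widehat{g}(\mathbf{k};L)$ of $\widehat{f\ast g}(\mathbf{k};L)$) was already performed in Proposition \ref{fftFGfftFfftG} and Theorems \ref{CmnKerr} and \ref{CmnKFG-F*G}. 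The corollary merely repackages the quantitative estimate into the Landau notation used throughout the paper.
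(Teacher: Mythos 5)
Your proposal is correct and matches the paper's (implicit) argument exactly: the corollary is an immediate restatement of Theorem \ref{CmnKFG-F*G}, since the bound $2\gamma_{m,n}d_{f,g}/K$ has a constant independent of $K$, which is precisely the $\mathcal{O}(1/K)$ claim. The paper offers no separate proof for this corollary, and your repackaging of the quantitative estimate into Landau notation is all that is needed.
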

The following diagram compares construction of Fourier-Bessel transform $C_{m,n}(f\ast g)$,  and the unified finite Fourier-Bessel transform $C_{m,n}^K[f,g]$ for $f,g\in\mathcal{V}_{1/2}$ at order $K\in\mathbb{N}$. 
\begin{figure}[H]
\begin{tikzcd}
f\ast g \arrow[r, "\widehat{}"] \arrow[d, "C_{m,n}"] &  \left(\widehat{f}(\mathbf{k})\widehat{g}(\mathbf{k})\right)_{\mathbf{k}\in\mathbb{Z}^2} \arrow[d] \arrow[r, "\mathcal{O}",equal] & \left(\widehat{f}(\mathbf{k};2K+1)\widehat{g}(\mathbf{k};2K+1)\right)_{\|\mathbf{k}\|_\infty\le K}\arrow[d]\\
C_{m,n}(f\ast g) \arrow[r,  "(\ref{C.ZBVC})",equal]& \sum_{\mathbf{k}\in\mathbb{Z}^2}\mathbf{c}(\mathbf{k};m,n)\widehat{f}(\mathbf{k})\widehat{g}(\mathbf{k})
 \arrow[r,  "\mathcal{O}",equal] & C_{m,n}^K[f,g]
\end{tikzcd}
\caption{The diagram comparing construction of Fourier-Bessel transforms vs.  FFBT}
\end{figure}

\subsection{Inverse Finite Fourier-Bessel transform of convolutions}
Then we introduce the inverse unified finite Fourier-Bessel transforms for convolutions using finite Fourier transform. 

Suppose $f,g:\mathbb{R}^2\to\mathbb{C}$ are supported in $\mathbb{B}_{1/2}$ and $M,N,K\in\mathbb{N}$.   
We then define 
\begin{align}\label{SMNKFG-FFT}
S_{M,N}^{K}[f,g](x,y)
:=\sum_{m=-M}^M\sum_{n=1}^NC_{m,n}^{K}[f,g]\Psi_{m,n}(x,y),
\end{align}
for $(x,y)\in\mathbb{R}^2$, where $C_{m,n}^{K}[f,g]$ is given by (\ref{CmnKFG}). 

Next we show that $S_{M,N}^{K}[f,g]$ is an accurate approximation of $S_{M,N}(f\ast g)$.

\begin{theorem}
Let $f,g\in\mathcal{C}^1(\mathbb{R}^2)$ be supported in $\mathbb{B}_{1/2}^\circ$ and $M,N\in\mathbb{N}$.   Then 
\[
\left\|S_{M,N}^{K}[f,g]-S_{M,N}(f\ast g)\right\|_\infty\le\frac{2d_{f,g}D[M,N]}{K}\hspace{1.5cm}{\rm for}\ K\ge K[M,N].
\]
\end{theorem}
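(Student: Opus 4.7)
The strategy is to mirror the proof of Theorem \ref{SMNKer} almost verbatim, replacing the per-coefficient bound from Theorem \ref{CmnKerr} with the convolution analog from Theorem \ref{CmnKFG-F*G}. Since both $S_{M,N}^{K}[f,g]$ and $S_{M,N}(f\ast g)$ are built from the same $\Psi_{m,n}$, the difference factors through the coefficient error, so the only work is to sum up the pointwise coefficient bounds against $|\Psi_{m,n}|$.

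First, I would fix $K\ge K[M,N]$. By definition of $K[M,N]$ this means $K\ge K_{m,n}$ for every $(m,n)$ with $0\le|m|\le M$ and $1\le n\le N$, so Theorem \ref{CmnKFG-F*G} is applicable uniformly to all coefficients appearing in the partial sums, yielding
\[
\left|C_{m,n}^{K}[f,g]-C_{m,n}(f\ast g)\right|\le\frac{2\gamma_{m,n}d_{f,g}}{K}
\]
for each such pair $(m,n)$, with the constants $\gamma_{m,n},d_{f,g}$ supplied by Theorem \ref{CmnKFG-F*G}.

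Next, using the definitions (\ref{SMNKFG-FFT}) and (\ref{SMN}), I would write for every $(x,y)\in\Omega$
\[
S_{M,N}^{K}[f,g](x,y)-S_{M,N}(f\ast g)(x,y)=\sum_{m=-M}^{M}\sum_{n=1}^{N}\bigl(C_{m,n}^{K}[f,g]-C_{m,n}(f\ast g)\bigr)\Psi_{m,n}(x,y),
\]
apply the triangle inequality, insert the coefficient bound above, and bound $|\Psi_{m,n}(x,y)|\le|J_{m+1}(z_{m,n})|^{-1}$ using $|J_m|\le 1$ (exactly as in the proof of Theorem \ref{SMNKer}). This produces
\[
\left|S_{M,N}^{K}[f,g](x,y)-S_{M,N}(f\ast g)(x,y)\right|\le\frac{2d_{f,g}}{K}\sum_{m=-M}^{M}\sum_{n=1}^{N}\frac{\gamma_{m,n}}{|J_{m+1}(z_{m,n})|}=\frac{2d_{f,g}D[M,N]}{K}
\]
with the same $D[M,N]$ introduced in the proof of Theorem \ref{SMNKer}. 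Taking the supremum over $(x,y)$ then gives the claimed inequality.

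I do not expect any genuine obstacle: the finite Fourier-Bessel coefficient error bound has already been promoted to the convolution setting in Theorem \ref{CmnKFG-F*G}, and the passage from coefficients to the uniform norm of the partial sum is a textbook triangle-inequality argument that was already carried out for $S_{M,N}^{K}(f)$. The only point that requires minor care is ensuring that the constant $d_{f,g}$ here is exactly the one produced by Theorem \ref{CmnKFG-F*G} (so that no rescaling of constants is hidden) and that the sup is taken over the support of the $\Psi_{m,n}$, which is $\mathbb{B}\subset\Omega$, so that the bound on $|\Psi_{m,n}|$ used above is indeed valid on the whole domain.
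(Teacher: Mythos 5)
Your proposal is correct and follows essentially the same route as the paper: fix $K\ge K[M,N]$ so that Theorem \ref{CmnKFG-F*G} applies to every coefficient in the partial sum, then pass the resulting bound $\left|C_{m,n}^{K}[f,g]-C_{m,n}(f\ast g)\right|\le 2\gamma_{m,n}d_{f,g}/K$ through the triangle inequality against $\left|\Psi_{m,n}(x,y)\right|\le |J_{m+1}(z_{m,n})|^{-1}$ and absorb the double sum into $D[M,N]$. No gaps; the constants $d_{f,g}$ and $D[M,N]$ you use are exactly those of Theorem \ref{CmnKFG-F*G} and Theorem \ref{SMNKer}, as in the paper.
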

\begin{proof}
Let $K[M,N]\le K$.  Then $K_{m,n}\le K$ for $-M\le m\le M$ and $1\le n\le N$.  So 
Theorem \ref{CmnKFG-F*G} implies  
\begin{equation*}
\left|C_{m,n}^{K}[f,g]-C_{m,n}^{K}(f\ast g)\right|\le\frac{2\gamma_{m,n}d_{f,g}}{K}.
\end{equation*}
Therefore,  we achieve 
\begin{align*}
\left|S_{M,N}^{K}[f,g](x,y)-S_{M,N}(f\ast g)(x,y)\right|
&\le\hspace{-0.3cm}\sum_{m=-M}^M\sum_{n=1}^N\left|C_{m,n}^{K}[f,g]-C_{m,n}(f\ast g)\right|\left|\Psi_{m,n}(x,y)\right|
\le\hspace{-0.1cm} \frac{2d_{f,g}D[M,N]}{K},
\end{align*}
which implies that $
\left\|S_{M,N}^{K}[f,g]-S_{M,N}(f\ast g)\right\|_\infty\le\frac{2d_{f,g}D[M,N]}{K}$.
\end{proof}
\begin{corollary}
{\it Let $f,g\in\mathcal{C}^1(\mathbb{R}^2)$ be supported in $\mathbb{B}_{1/2}^\circ$ and $M,N\in\mathbb{N}$.   Then
\[
\left\|S_{M,N}^{K}[f,g]-S_{M,N}(f\ast g)\right\|_\infty=\mathcal{O}(1/K), \hspace{1.5cm}{\rm for}\ K\ge K[M,N].
\]
}\end{corollary}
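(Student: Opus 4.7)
The plan is to observe that this corollary is simply the asymptotic restatement of the theorem immediately preceding it, so almost nothing is required beyond invoking the bound the theorem supplies and recognizing that its prefactor is $K$-free.

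More precisely, I would argue as follows. Fix $f,g\in\mathcal{C}^1(\mathbb{R}^2)$ supported in $\mathbb{B}_{1/2}^\circ$ and $M,N\in\mathbb{N}$. The preceding theorem gives
\[
\left\|S_{M,N}^{K}[f,g]-S_{M,N}(f\ast g)\right\|_\infty\le \frac{2d_{f,g}D[M,N]}{K}
\]
for every $K\ge K[M,N]$. The constant $d_{f,g}=\max\{\kappa^*c_{f,g},2c_{f\ast g}\}$ depends only on the norms $\|f\|_1$, $\|g\|_\infty$, $\|\nabla f\|_\infty$, $\|\nabla g\|_\infty$ and on $c_{f\ast g}$ (which in turn depends only on $f\ast g$), and thus is independent of $K$. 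Similarly $D[M,N]=\sum_{m=-M}^M\sum_{n=1}^N\gamma_{m,n}/|J_{m+1}(z_{m,n})|$ depends only on $M,N$ and not on $K$. Setting $M_{f,g,M,N}:=2d_{f,g}D[M,N]>0$, which is a constant depending only on $f,g,M,N$, we obtain
\[
\left\|S_{M,N}^{K}[f,g]-S_{M,N}(f\ast g)\right\|_\infty\le \frac{M_{f,g,M,N}}{K},\qquad K\ge K[M,N],
\]
which, by the definition of $\mathcal{O}$ recalled in the General notation subsection, is exactly the claim $\mathcal{O}(1/K)$ for $K\ge K[M,N]$.

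Since this is a purely book-keeping step (identifying constants and applying the already-established quantitative bound), there is no substantive obstacle. The only thing worth emphasizing is that the constants $d_{f,g}$ and $D[M,N]$ are finite for the function classes in question: finiteness of $d_{f,g}$ follows from $\mathcal{C}^1$-regularity and compact support of $f,g$ together with the smoothing property $f\ast g\in\mathcal{C}^1(\mathbb{R}^2)$ (Proposition 8.10 of \cite{Foll.R}) used earlier in Proposition \ref{fftFGfftFfftG}, and finiteness of $D[M,N]$ follows because the sum defining it is finite and each summand is finite. Thus the corollary is immediate.
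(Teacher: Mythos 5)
Your argument is correct and is exactly what the paper intends: the corollary is stated without proof as an immediate consequence of the preceding theorem, whose bound $2d_{f,g}D[M,N]/K$ has a $K$-independent prefactor, which is precisely the $\mathcal{O}(1/K)$ claim under the notation of Section 2. Your identification of the relevant constants and their finiteness matches the paper's setup, so nothing further is needed.
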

\begin{remark}\label{BaCNV}
As discussed in Remark \ref{Ba},  the unified iFFBT numerical scheme for convolutions can also be applied for approximation of convolutions  supported in arbitrary disks by scaling.  Let $a>0$ and $f,g:\mathbb{R}^2\to\mathbb{C}$ be integrable functions supported in $\mathbb{B}_{a/2}$.  Then $\widetilde{f},\widetilde{g}$ are supported in $\mathbb{B}_{1/2}$.  Assume $S_{M,N}(\widetilde{f}\ast \widetilde{g})$ is an approximation of $\widetilde{f}\ast\widetilde{g}$ in $L^2$-sense for large enough $M,N\in\mathbb{N}$. Then 
$$S_{M,N}^{a,K}[f,g](\mathbf{x}):=S_{M,N}^K[\widetilde{f},\widetilde{g}](a^{-1}\mathbf{x}),$$ is a numerical approximation of $(f\ast g)(\mathbf{x})$ in $L^2$-sense, if $K\ge K[M,N]$.
\end{remark}
\section{\bf Numerical Convolutions using iFFBT/FFBT in MATLAB}
This section presents matrix forms for FFBT/iFFBT of convolutions including some numerical experiments in MATLAB. 
\subsection{Matrix form of FFBT for convolutions on disks}
Let $f,g\in\mathcal{V}_{1/2}$ and $(m,n)\in\mathbb{Z}\times\mathbb{N}$.  Assume $K\in\mathbb{N}$ and $L:=2K+1$.  Suppose  $x_j:=-1+(j-1)\delta$ with $\delta:=\frac{2}{L}$ and $1\le j\le L$.  Let $\mathbf{F},\mathbf{G}\in\mathbb{C}^{L\times L}$ be given by $\mathbf{F}(i,j):=f(x_i,x_j)$ and $\mathbf{G}(i,j):=g(x_i,x_j)$ for every $1\le i,j\le L$.
Then
\begin{align*}
&C_{m,n}^{K}[f,g]
=\sum_{\|\mathbf{k}\|_\infty\le K}\mathbf{c}(\mathbf{k};m,n)\widehat{f}(\mathbf{k};L)\widehat{g}(\mathbf{k};L)
\\&=\delta^4\sum_{k_1=-K}^K\sum_{k_2=-K}^K\mathbf{c}(k_1,k_2;m,n)\widehat{\mathbf{F}}\odot\widehat{\mathbf{G}}(\tau_L(k_1)+1,\tau_L(k_2)+1)
=\delta^4\sum_{l=1}^{L}\sum_{\ell=1}^{L}\mathbf{Q}^\times(m,n;\ell,l)\widehat{\mathbf{F}}\odot\widehat{\mathbf{G}}(l,\ell),
\end{align*}
where $\odot$ is the Hadamard product,  $\widehat{\mathbf{F}}$ (resp. $\widehat{\mathbf{G}}$) is the DFT of the matrices $\mathbf{F}$ (resp. $\mathbf{G}$) and the matrix $\mathbf{Q}^\times(m,n)\in\mathbb{C}^{L\times L}$ is given by 
\begin{equation}
\mathbf{Q}^\times(m,n)_{\ell,l}=\left\{\begin{array}{llll}
\mathbf{c}(l-1,\ell-1;m,n) & {\rm if}\ 1\le \ell\le K+1,\ 1\le l\le K+1\\
\mathbf{c}(l-L-1,\ell-1;m,n)& {\rm if}\ 1\le \ell\le K+1,\ K+2\le l\le 2K+1\\
\mathbf{c}(l-1,\ell-L-1;m,n)  & {\rm if}\ K+2\le \ell\le 2K+1,  \ 1\le l\le K+1\\
\mathbf{c}(l-L-1,\ell-L-1;m,n) & {\rm if}\ K+2\le \ell\le L,\ K+2\le l\le L\\
\end{array}\right.
\end{equation} 
Therefore, we achieve 
\begin{align}\label{CmnKFG-FFT-TR}
C_{m,n}^{K}[f,g]=\delta^4\mathrm{tr}\left[\mathbf{Q}^\times(m,n)\left(\widehat{\mathbf{F}}\odot\widehat{\mathbf{G}}\right)\right].
\end{align} 

The following Algorithm \ref{CmnKFG-FFTalg} approximates the Fourier-Bessel coefficient $C_{m,n}(f\ast g)$ using $C_{m,n}^{K}[f,g]$ for functions $f,g:\mathbb{R}^2\to\mathbb{C}$ supported on $\mathbb{B}_{1/2}$ with respect to a given absolute error.
\begin{algorithm}[H]
\caption{Finding $\epsilon$-approximation of $C_{m,n}(f\ast g)$ using $C_{m,n}^{K}[f,g]$} 
\begin{algorithmic}[1]
\State{\bf input data}
The functions $f,g:\mathbb{R}^2\to\mathbb{C}$ supported on $\mathbb{B}_{1/2}$,  given error $\epsilon>0$,  and $(m,n)\in\mathbb{Z}\times\mathbb{N}$
\State {\bf output result} $C_{m,n}^{K}[f,g]$ with the absolute error $\le\epsilon$\\ 
Let $K_{m,n}:=\lceil\frac{z_{m,n}}{\pi}\rceil$,  and $\beta_\epsilon:=\max\{2\epsilon^{-1}d_{f,g}\gamma_{m,n},K_{m,n}\}$.\\
	Find $K\in\mathbb{N}$ such that $K\ge\beta_\epsilon$ and let $L:=2K+1$.\\
Load precomputed data $\mathbf{Q}^\times(m,n)\in\mathbb{C}^{L\times L}$\\
Generate the sampling grid $(x_i,x_j)$ with $x_i:=-1+(i-1)\delta$, and $\delta:=\frac{2}{L}$,  for $1\le i\le L$\\
Generate sampled values $\mathbf{F}:=(f(x_i,x_j))_{1\le i,j\le L}$ and  $\mathbf{G}:=(g(x_i,x_j))_{1\le i,j\le L}$.\\
	Compute the DFT matrices $\widehat{\mathbf{F}}$ and $\widehat{\mathbf{G}}$.\\
	Compute the Hadamard product 
		$\widehat{\mathbf{F}}\odot\widehat{\mathbf{G}}:=\widehat{\mathbf{F}}.*\widehat{\mathbf{G}}.$
\State Compute 
$C_{m,n}^{K}[f,g]:=\delta^4\mathrm{tr}\left[\mathbf{Q}^\times(m,n)\widehat{\mathbf{F}}\odot\widehat{\mathbf{G}}\right].$
\end{algorithmic} 
\label{CmnKFG-FFTalg}
\end{algorithm}
\subsection{Matrix form of iFFBT for convolutions on disks}
Let $f,g\in\mathcal{V}_{1/2}$.  Suppose $M,N,K\in\mathbb{N}$ and $(x,y)\in\mathbb{R}^2$.  Let $L:=2K+1$.  Then using (\ref{SMNKFG-FFT}) and (\ref{CmnKFG-FFT-TR}), we get 
\begin{align*}
S_{M,N}^{K}[f,g](x,y)
&=\sum_{m=-M}^M\sum_{n=1}^N\left(\delta^4\sum_{l=1}^{L}\sum_{\ell=1}^{L}\mathbf{Q}^\times(m,n)_{\ell,l}\widehat{\mathbf{F}}(l,\ell)\widehat{\mathbf{G}}(l,\ell)\right)\Psi_{m,n}(x,y)
\\&=\delta^4\sum_{l=1}^{L}\sum_{\ell=1}^{L}\left(\sum_{m=-M}^M\sum_{n=1}^N\mathbf{Q}^\times(m,n)_{\ell,l}\Psi_{m,n}(x,y)\right)\widehat{\mathbf{F}}(l,\ell)\widehat{\mathbf{G}}(l,\ell).
\end{align*}
Suppose that $1\le l,\ell\le L$ are given.  Then 
\begin{align*}
\sum_{m=-M}^M\sum_{n=1}^N\mathbf{Q}^\times(m,n)_{l,\ell}\Psi_{m,n}(x,y)
=\mathrm{tr}\left[\mathbf{H}^\times(l,\ell,:,:)\mathbf{P}(x,y)^T\right],
\end{align*}
where $\mathbf{H}^\times(l,\ell)\in\mathbb{C}^{(2M+1)\times N}$ is given by 
\begin{equation}\label{H*mat}
\mathbf{H}^\times(l,\ell)_{t,n}=\mathbf{H}^\times(l,\ell,t,n):=
\mathbf{Q}^\times(t-M-1,n,l,\ell),
\end{equation}
for every $1\le l,\ell\le L$,  $1\le t\le 2M+1$, and $1\le n\le N$.
Then, we have 
\begin{align*}
S_{M,N}^K[f,g](x,y)
&=\delta^4\sum_{l=1}^{L}\sum_{\ell=1}^{L}\left(\sum_{m=-M}^M\sum_{n=1}^N\mathbf{Q}^\times(m,n)_{\ell,l}\Psi_{m,n}(x,y)\right)\widehat{\mathbf{F}}(l,\ell)\widehat{\mathbf{G}}(l,\ell)
\\&=\delta^4\sum_{l=1}^{L}\sum_{\ell=1}^{L}\mathrm{tr}\left[\mathbf{H}^\times(l,\ell)\mathbf{P}(x,y)^T\right]\widehat{\mathbf{F}}(l,\ell)\widehat{\mathbf{G}}(l,\ell)
=\delta^4\mathrm{tr}\left[\mathbf{K}^\times(x,y)\left(\widehat{\mathbf{F}}\odot\widehat{\mathbf{G}}\right)\right],
\end{align*}
implying that 
\begin{equation}\label{SMNKFG.tr.xy}
S_{M,N}^{K}[f,g](x,y)=\delta^4\mathrm{tr}\left[\mathbf{K}^\times(x,y)\left(\widehat{\mathbf{F}}\odot\widehat{\mathbf{G}}\right)\right],
\end{equation}
where the convolution kernel $\mathbf{K}^\times(x,y)\in\mathbb{C}^{L\times L}$ is defined by 
\[
\mathbf{K}^\times(x,y)_{\ell,l}:=\mathrm{tr}\left[\mathbf{H}^\times(l,\ell,:,:)\mathbf{P}(x,y)^T\right],\hspace{1cm}\ {\rm for}\ 1\le l,\ell\le L.
\]

The Algorithm \ref{SMNKFG-FFTalg} approximates the Fourier-Bessel partial sum $S_{M,N}(f\ast g)$ of the convolution $f\ast g$ with respect to a given absolute error $\epsilon$ using $S_{M,N}^{K}[f,g]$.

\begin{algorithm}[H]
\caption{Finding $\epsilon$-approximation of $S_{M,N}(f\ast g)(x,y)$ using $S_{M,N}^{K}[f,g](x,y)$} 
\begin{algorithmic}[1]
\State{\bf input data} functions $f,g\in\mathcal{C}^1(\mathbb{R}^2)$ supported in $\mathbb{B}_{1/2}^\circ$,  error $\epsilon>0$,  orders $M,N\in\mathbb{N}$, and $(x,y)\in\Omega$
\State {\bf output result} $S_{M,N}^{K}[f,g](x,y)$ with the absolute error $\le\epsilon$
\State Compute $D[M,N]:=\frac{1}{\sqrt{2}}\sum_{m=-M}^M\sum_{n=1}^N\frac{\gamma_{m,n}}{|J_{m+1}(z_{m,n})|}.$
\State Compute $K[M,N]:=\max\{K_{m,n}:0\le m\le M,1\le n\le N\}.$
\State Put $\beta_\epsilon:=\max\{2\epsilon^{-1}d_{f,g}D[M,N],K[M,N]\}$,.
\State Find $K\in\mathbb{N}$ such that $K\ge\beta_\epsilon$ and let $L:=2K+1$.
\State Load the precomputed values $\mathbf{P}(x,y)\in\mathbb{C}^{(2M+1)\times N}$ and $\mathbf{H}^\times\in\mathbb{C}^{L\times L\times(2M+1)\times N}$
\State Generate the convolution kernel $\mathbf{K}^\times(x,y)\in\mathbb{C}^{L\times L}$ using 
\[
\mathbf{K}^\times(x,y)_{\ell,l}:=\mathrm{tr}\left[\mathbf{H}(l,\ell,:,:)\mathbf{P}(x,y)^T\right],\hspace{1cm}\ {\rm for}\ 1\le l,\ell\le L.
\]
\State Generate the sampling grid $(x_i,x_j)$ with $x_i:=-1+(i-1)\delta$, and $\delta:=\frac{2}{L}$,  for $1\le i\le L$\\
Generate sampled values $\mathbf{F}:=(f(x_i,x_j))_{1\le i,j\le L}$ and  $\mathbf{G}:=(g(x_i,x_j))_{1\le i,j\le L}$)\\
	Compute the matrices $\widehat{\mathbf{F}}$ and $\widehat{\mathbf{G}}$.\\
	Compute the Hadamard product 
		$\widehat{\mathbf{F}}\odot\widehat{\mathbf{G}}:=\widehat{\mathbf{F}}.*\widehat{\mathbf{G}}.$
\State Compute 
$S_{M,N}^{K}[f,g](x,y):=\delta^4\mathrm{tr}\left[\mathbf{K}^\times(x,y)\left(\widehat{\mathbf{F}}\odot\widehat{\mathbf{G}}\right)\right].$
\end{algorithmic} 
\label{SMNKFG-FFTalg}
\end{algorithm}

\subsection{Numerical experiments} 
We here implemented experiments in MATLAB for convolution of functions supported on disks using the discussed matrix approach (\ref{SMNKFG.tr.xy}).  
It is worth mentioning that the matrix forms (\ref{CmnKFG-FFT-TR}) and (\ref{SMNKFG.tr.xy}) have several benefits in computing approximations of convolutions supported on disks.  To begin with,  the forms reduced multivariate summations  in the right hand-side of (\ref{CmnKFG}) and (\ref{SMNKFG-FFT}) into matrix trace/multiplication which can be computed using fast matrix multiplications algorithms.  Also,  the matrix forms (\ref{CmnKFG-FFT-TR}) and (\ref{SMNKFG.tr.xy}) involves the DFT of sampled values matrices which can be performed using FFT. 

Let $b>0$ and $\chi_{\mathbb{B}_b}$ be the characteristic function of the disk of radius $b$ . 

\begin{example}
Suppose that $f=g=\chi_{\mathbb{B}_1}$.
\begin{figure}[H]
\centering
\includegraphics[keepaspectratio=true,width=\textwidth, height=0.38\textheight]{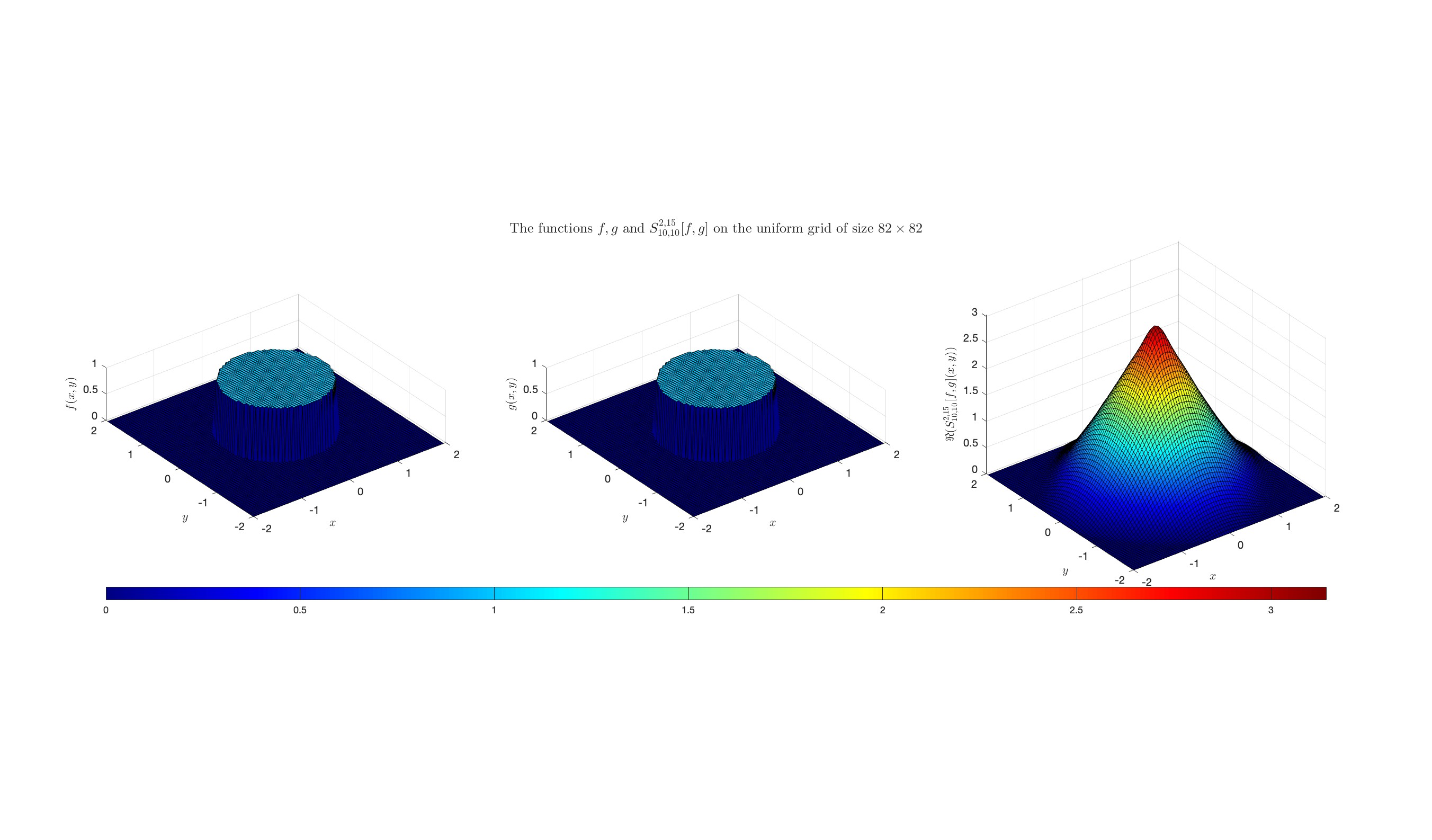}
\caption{The 3D surface plot of $f$, $g$, and $S_{10,10}^{3,15}[f,g]$ on the uniform grid of size $82\times 82$ in $\Omega_3$.  It is worth noting that $K[10,10]=15$.}
\label{fig:SurfE1E1}
\end{figure}
\begin{figure}[H]
\centering
\includegraphics[keepaspectratio=true,width=\textwidth, height=0.3\textheight]{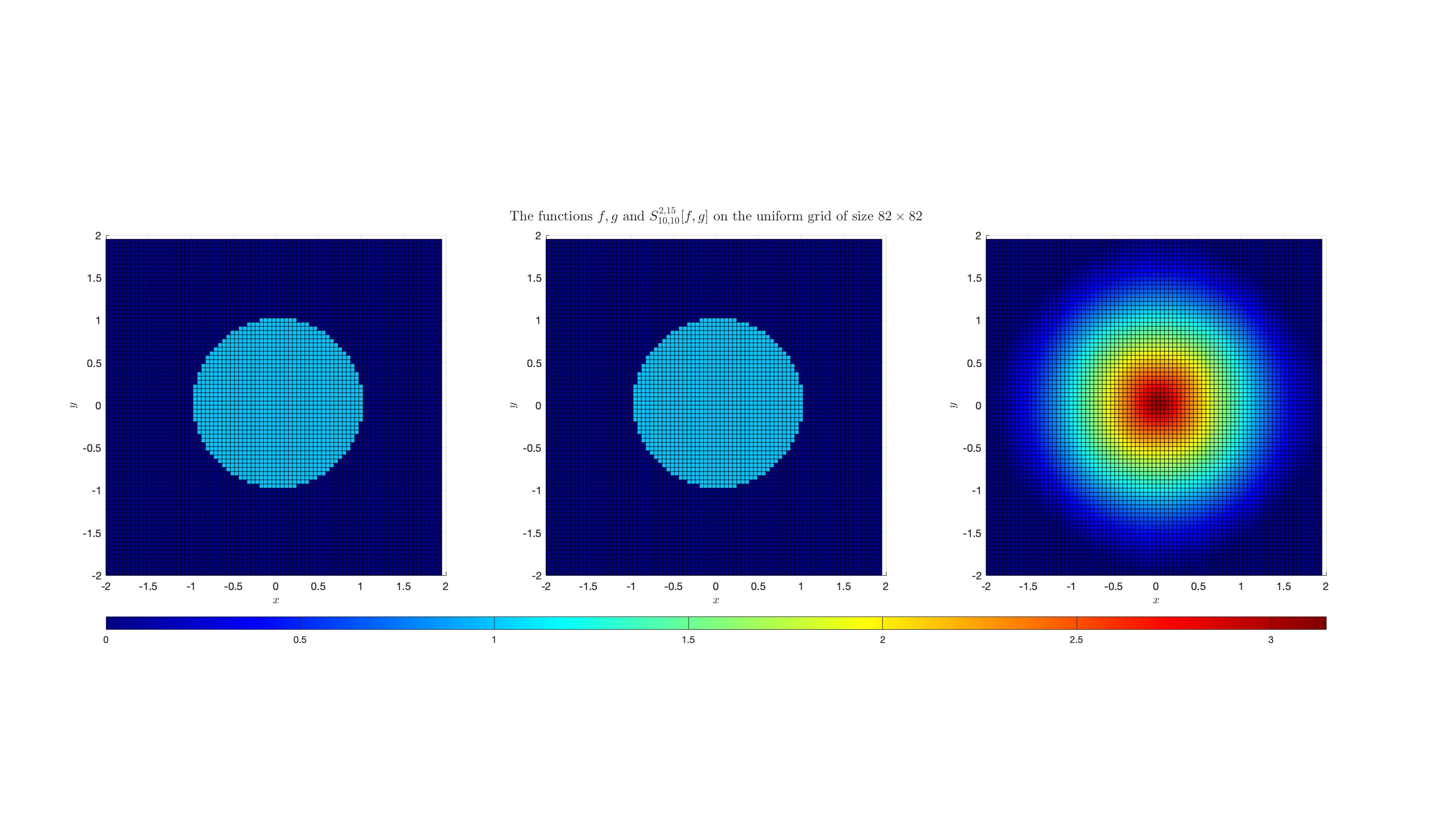}
\caption{The contour plot of $f$, $g$, and $S_{10,10}^{3,15}[f,g]$  on the uniform grid of size $82\times 82$ in $\Omega_3$. }
\label{fig:ContE1E1}
\end{figure}
\end{example}
\newpage
\begin{example}
Suppose that $f:=\chi_{\mathbb{B}_1}$ and $g:=\chi_{\mathbb{B}_2}$.
\begin{figure}[H]
\centering
\includegraphics[keepaspectratio=true,width=\textwidth, height=0.22\textheight]{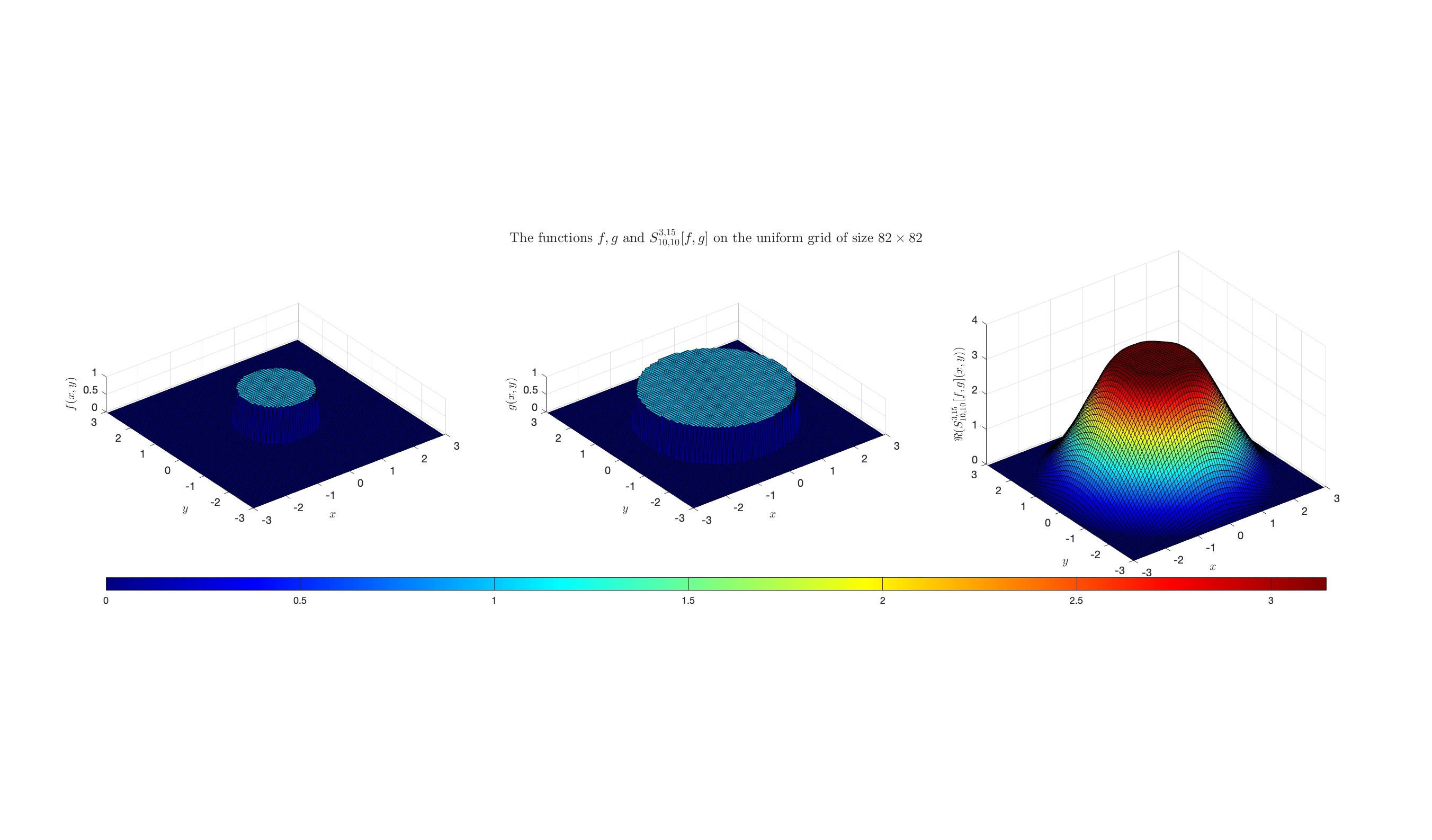}
\caption{The 3D surface plots of $f$, $g$, and $S_{10,10}^{3,15}[f,g]$ on the grid of size $82\times 82$.}
\label{fig:SurfE1E2}
\end{figure}
\begin{figure}[H]
\centering
\includegraphics[keepaspectratio=true,width=\textwidth, height=0.22\textheight]{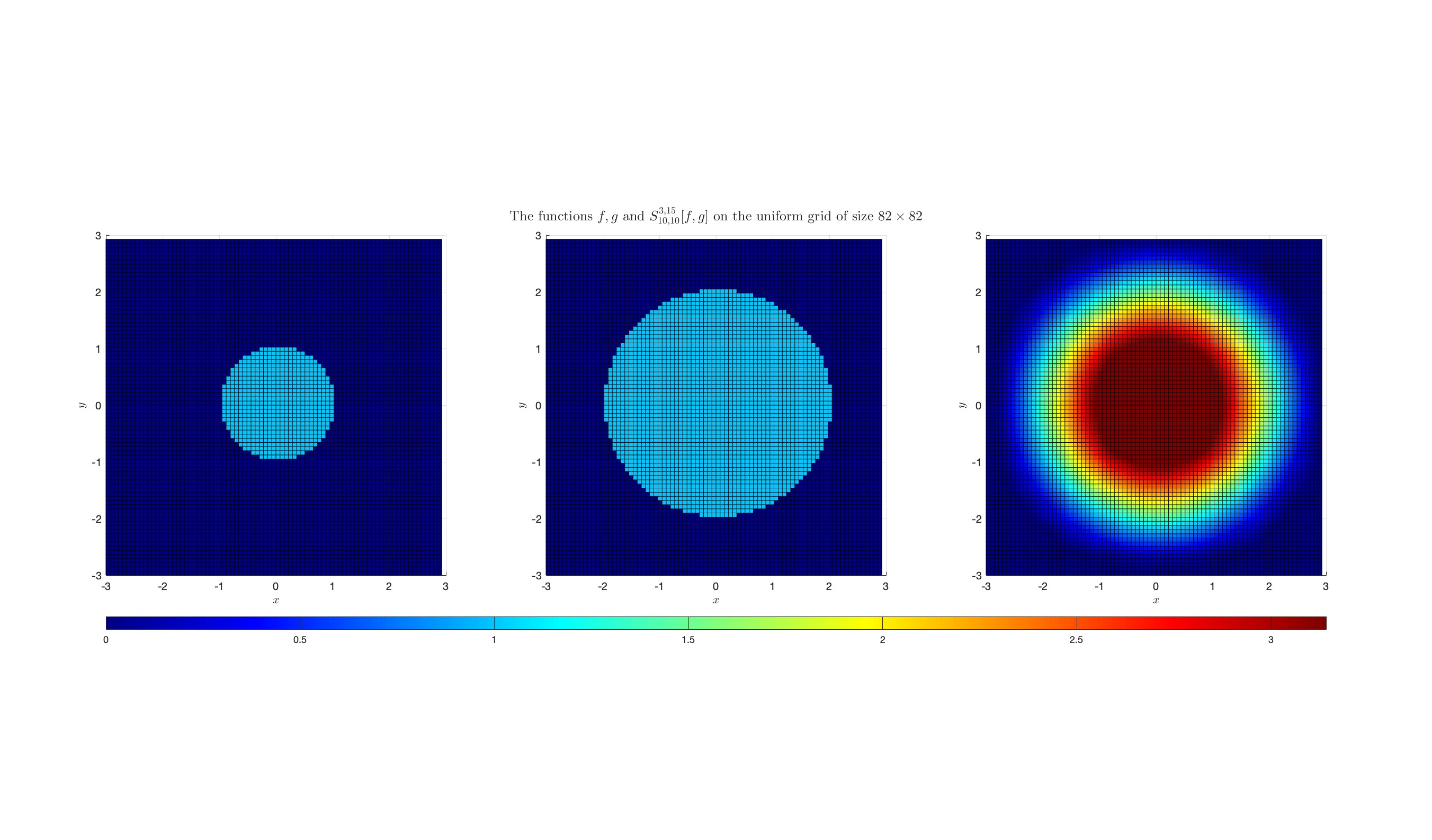}
\caption{The contour plots of $f$, $g$, and $S_{10,10}^{3,15}[f,g]$  on the grid of size $82\times 82$. }
\label{fig:ContE1E2}
\end{figure}
\end{example}
{\bf Concluding remarks.}
Discrete numerical methods using finite Fourier transform for approximating Fourier-Bessel coefficients (FFBT) and Fourier-Bessel partial sums (iFFBT) of functions on disks are introduced.  The approach taken here combines the best attributes of DFT/FFT for computing convolutions and the steerability of bandlimited Fourier-Bessel expansions. While bandlimiting in the polar basis loses the exactness of DFT/FFT convolution property, and finite sampling of bandlimited Fourier-Bessel series leads to a loss of exact steerability, both properties are recovered in the asymptotic limit of large band limit and sample size.

For functions which are smoothly supported in disks (absolutely/approximately) the numerical algorithms iFFBT gives very good convergence rate for the approximation in a few steps.  This is not so surprising as it has been guaranteed by the convergence Theorem \ref{SMNKer}.   For functions which are almost everywhere smooth, the iFFBT numerical algorithm shows some type of almost everywhere convergence after more steps of computations.  This is because of canonical Gibbs phenomenon originated from structure of polar harmonics near boundary of the disk due to discontinuities which occurs by zero-padding.   In the case,  that the ultimate goal is approximation of the support of the function, then the numerical algorithm for iFFBT is efficient yet.  We then introduced the numerical algorithm iFFBT,  for approximating convolutions supported on disks.

{\bf Acknowledgments.}
This research is supported by NUS Startup grants A-0009059-02-00 and A-0009059-03-00 and AME Programmatic Fund Project MARIO A-0008449-01-00.  The authors 
gratefully acknowledge the supporting agencies. The findings and opinions expressed here are only those of the authors, and not of the funding agencies. Thanks to Joël Bensoam,  and Ye Jikai for useful discussions related to the subject.

\appendix
\section{}\label{Apx}
This section contains some technical results, used in proofs of the main results. 
\subsection{Finite Fourier coefficients}\label{Apx1}
We here discuss a numerical scheme for Fourier coefficients. 

For $u\in L^1[-1,1] $ and $k\in\mathbb{Z}$, the Fourier coefficient  $\widehat{u}[k]$ is given by 
\begin{equation}\label{uk}
\widehat{u}[k]:=\frac{1}{2}\int_{-1}^1 u(x)e^{-\pi\ii kx}\D x.
\end{equation}
Assume $L\in\mathbb{N}$.  Then the finite Fourier coefficient of $u$ at $k\in\mathbb{Z}$ on the uniform grid of size $L$ is  
\begin{equation}\label{FFC}
\widehat{u}[k;L]:=\frac{1}{L}\sum_{i=1}^Lu(x_i)e^{-\pi\ii kx_i},
\end{equation}
where $x_i=-1+(i-1)\delta$ with $\delta:=\frac{2}{L}$.
In terms of classical integral calculus,  $\widehat{u}[k;L]$ is a left uniform Riemann sum of the integral $\widehat{u}[k]$ given by (\ref{uk}).  Next Lemma shows that the finite Fourier coefficient (\ref{FFC}) gives exact value of Fourier coefficient (\ref{uk}) for trigonometric polynomials. \footnote{For $K\in\mathbb{N}$,  we denote 
$\mathcal{T}_K[-1,1]:=\left\{p:[-1,1]\to\mathbb{C}:p(x)=\sum_{\ell=-K}^K\widehat{p}[\ell]e^{\pi\ii \ell x}\ \forall x\in[-1,1]\right\}$. }
\begin{lemma}\label{TrigL}
Let $K\in\mathbb{N}$ and $p\in\mathcal{T}_K[-1,1]$ be a trigonometric polynomial of degree $K$.  Then  
\begin{equation*}
\widehat{p}[k]=\widehat{p}[k;2K+1],\hspace{1.5cm}\ {\rm for}\ |k|\le K.
\end{equation*}
\end{lemma}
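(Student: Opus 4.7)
The plan is to reduce both sides to the same orthogonality identity by expanding $p$ in its exponential basis and interchanging finite sums. Write
\[
p(x) \;=\; \sum_{\ell=-K}^{K}\widehat{p}[\ell]\,e^{\pi \ii \ell x}.
\]
Substituting into definition \eqref{uk} and interchanging the finite sum with the integral gives
\[
\widehat{p}[k] \;=\; \sum_{\ell=-K}^{K}\widehat{p}[\ell]\cdot\frac{1}{2}\int_{-1}^{1} e^{\pi \ii(\ell-k)x}\,\D x,
\]
and the inner integral collapses via $\tfrac{1}{2}\int_{-1}^1 e^{\pi \ii n x}\D x = \delta_{n,0}$ for $n\in\mathbb{Z}$, so the right-hand side is $\widehat{p}[k]$ itself (this is just a consistency check).

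Similarly, substituting into \eqref{FFC} with $L:=2K+1$ and $x_i=-1+(i-1)\delta$, $\delta=2/L$, yields
\[
\widehat{p}[k;L] \;=\; \sum_{\ell=-K}^{K}\widehat{p}[\ell]\cdot\frac{1}{L}\sum_{i=1}^{L} e^{\pi \ii(\ell-k)x_i}.
\]
The main step is to evaluate the inner discrete sum. Writing $e^{\pi \ii(\ell-k)x_i}=e^{-\pi \ii(\ell-k)}\,e^{2\pi \ii(\ell-k)(i-1)/L}$, the inner sum becomes a geometric progression in $i$ with ratio $e^{2\pi \ii(\ell-k)/L}$, hence equals $L$ when $\ell-k\equiv 0\pmod L$ and $0$ otherwise.

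The hypothesis $|k|\le K$ is used precisely here: for $\ell\in\{-K,\dots,K\}$ and $|k|\le K$, one has $|\ell-k|\le 2K<L$, so the only integer in this range that is a multiple of $L$ is $0$, i.e.\ $\ell=k$. Consequently
\[
\frac{1}{L}\sum_{i=1}^{L} e^{\pi \ii(\ell-k)x_i}\;=\;\delta_{\ell,k},
\]
and so $\widehat{p}[k;L]=\widehat{p}[k]$ for $|k|\le K$. I do not expect a genuine obstacle; the only subtlety is keeping track of the ``unaliased'' range $|\ell-k|<L$, which is exactly what forces the choice $L=2K+1$.
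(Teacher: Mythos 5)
Your proposal is correct and follows essentially the same route as the paper: expand $p$ in its exponential basis, reduce the finite Fourier coefficient to the discrete geometric sum $\frac{1}{L}\sum_{i=1}^{L}e^{2\pi \ii(\ell-k)(i-1)/L}=\delta_{\ell,k}$, and invoke $|\ell-k|\le 2K<L$ to rule out aliasing. The only cosmetic difference is that you also verify the continuous orthogonality identity as a consistency check, which the paper takes for granted.
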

\begin{proof}
Let $L:=2K+1$ and $|k|\le K$.  Since $p\in\mathcal{T}_K[-1,1]$,  for every $1\le i\le L$, we get 
\[
p(x_i)=\sum_{\ell=-K}^K\widehat{p}[\ell]e^{\pi\ii \ell x_i}=\sum_{\ell=-K}^K\widehat{p}[\ell]e^{-\pi\ii \ell}e^{2\pi\ii \ell(i-1)/L}=\sum_{\ell=-K}^K\widehat{p}[\ell](-1)^{-\ell}e^{2\pi\ii \ell(i-1)/L},
\]
Assume $-K\le \ell\le K$.  Then $|\ell-k|\le 2K<L$.  Hence, 
$
\sum_{i=1}^Le^{2\pi\ii(\ell-k)(i-1)/L}=L\delta_{\ell,k}.
$
So
\begin{align*}
\widehat{p}[k;L]&=\frac{1}{L}\sum_{i=1}^Lp(x_i)(-1)^{k}e^{-2\pi\ii k(i-1)/L}
=\frac{1}{L}\sum_{i=1}^L\left(\sum_{\ell=-K}^K\widehat{p}[\ell](-1)^{-\ell} e^{2\pi\ii \ell(i-1)/L}\right)(-1)^{k}e^{-2\pi\ii k(i-1)/L}
\\&=\sum_{\ell=-K}^K\widehat{p}[\ell](-1)^{k-\ell}\left(\frac{1}{L}\sum_{i=1}^Le^{2\pi\ii(\ell-k)(i-1)/L}\right)
=\sum_{\ell=-K}^K\widehat{p}[\ell](-1)^{k-\ell} \delta_{\ell,k}=\widehat{p}[k].
\end{align*}
\end{proof}
We then conclude the following bound of the absolute error for approximation of $\widehat{u}[k]$ using $\widehat{u}[k,L]$.
\begin{proposition}\label{mainP}
{\it Let $u\in\mathcal{C}^1[-1,1]$ with $u(-1)=u(1)$,  and $K\in\mathbb{N}$. 
Then 
\[
\Big|\widehat{u}[k]-\widehat{u}[k;2K+1]\Big|\le\frac{12\|u'\|_\infty}{\pi K},\hspace{1.5cm}\ {\rm for}\ |k|\le K.
\]
}\end{proposition}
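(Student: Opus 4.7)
The plan is to compare $u$ with its degree-$K$ partial Fourier sum and reduce the error to an alias sum controlled through the one-derivative decay of Fourier coefficients. I would set $p_K(x) := \sum_{|\ell|\le K}\widehat{u}[\ell]\,e^{\pi\ii\ell x} \in \mathcal{T}_K[-1,1]$ and $r_K := u - p_K$. Because $\widehat{p_K}[k]=\widehat{u}[k]$ for $|k|\le K$ and Lemma \ref{TrigL} gives $\widehat{p_K}[k;2K+1] = \widehat{p_K}[k]$, linearity of both transforms reduces the estimate to
\[
\widehat{u}[k] - \widehat{u}[k;2K+1] \;=\; -\widehat{r_K}[k;2K+1].
\]

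With $L:=2K+1$ (odd), I would next derive an aliasing identity. Expanding $u(x_i) = \sum_\ell \widehat{u}[\ell]\,e^{\pi\ii\ell x_i}$ pointwise (valid for $u\in\mathcal{C}^1$ periodic by Dirichlet's theorem) and applying the discrete orthogonality $\tfrac{1}{L}\sum_{i=1}^L e^{2\pi\ii(\ell-k)(i-1)/L}=\mathbf{1}_{\ell\equiv k \,(\mathrm{mod}\, L)}$, one obtains
\[
\widehat{r_K}[k;L] \;=\; \sum_{j\ne 0}(-1)^j\,\widehat{u}[k+jL],
\]
the sign $(-1)^j$ coming from $(-1)^{jL}=(-1)^j$ since $L$ is odd. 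Integration by parts, whose boundary contribution vanishes precisely because $u(-1)=u(1)$, next gives the decay $\widehat{u}[\ell] = \widehat{u'}[\ell]/(\pi\ii\ell)$ for $\ell\ne 0$. Since $|k+jL|\ge (2|j|-1)K$ whenever $|k|\le K$ and $|j|\ge 1$, Cauchy--Schwarz combined with Parseval's identity $\sum_\ell|\widehat{u'}[\ell]|^2 = \tfrac{1}{2}\int_{-1}^1|u'|^2\le \|u'\|_\infty^2$ yields
\[
\bigl|\widehat{r_K}[k;L]\bigr| \;\le\; \frac{\|u'\|_\infty}{\pi}\left(\sum_{j\ne 0}\frac{1}{(k+jL)^2}\right)^{\!1/2} \;=\; O\!\left(\frac{\|u'\|_\infty}{K}\right),
\]
and explicit bookkeeping (using $\sum_{j\ge 1}(2j-1)^{-2}=\pi^2/8$) readily produces a constant bounded by the stated $12/\pi$.

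The main obstacle I foresee is the pointwise justification of the Fourier-series expansion at the sample points for a merely $\mathcal{C}^1$ function. Dirichlet's pointwise-convergence theorem handles this directly, but a softer alternative is to establish the identity first for trigonometric polynomials (where it is immediate from Lemma \ref{TrigL}) and then approximate $u$ uniformly by Fejér means, passing to the limit in both the continuous and finite Fourier coefficients; either route completes the proof.
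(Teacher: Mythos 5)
Your proof is correct, but it takes a genuinely different route from the paper. The paper invokes Jackson's theorem (Theorem 1.3 of Rivlin) to produce a trigonometric polynomial $p\in\mathcal{T}_K[-1,1]$ with $\|u-p\|_\infty\le 6\|u'\|_\infty/(\pi K)$, then uses the exactness Lemma \ref{TrigL} and the crude bounds $|\widehat{u}[k]-\widehat{p}[k]|\le\|u-p\|_\infty$ and $|\widehat{p}[k;L]-\widehat{u}[k;L]|\le\|u-p\|_\infty$ to get the stated constant $12/\pi$. You instead derive the aliasing identity $\widehat{u}[k;L]-\widehat{u}[k]=\sum_{j\ne 0}(-1)^j\widehat{u}[k+jL]$ from discrete orthogonality (your detour through $p_K$ and $r_K$ is harmless but not even needed once you have this identity), and control the tail via integration by parts, $\widehat{u}[\ell]=\widehat{u'}[\ell]/(\pi\ii\ell)$, Cauchy--Schwarz and Parseval. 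The pointwise expansion at the sample points is justified as you say; in fact for $u\in\mathcal{C}^1$ with $u(-1)=u(1)$ the coefficient decay already gives absolute and uniform convergence of the Fourier series, so you can interchange the sums without appealing to Dirichlet or Fej\'er. Your bookkeeping yields $\bigl(\sum_{j\ne0}(k+jL)^{-2}\bigr)^{1/2}\le\pi/(2K)$ and hence the bound $\|u'\|_\infty/(2K)$, which is strictly sharper than the paper's $12\|u'\|_\infty/(\pi K)$. What each approach buys: yours is self-contained modulo standard Fourier facts and gives a better constant; the paper's is shorter if Jackson's theorem is taken as a black box, and generalizes immediately to $\mathcal{O}(K^{-q})$ rates for $\mathcal{C}^q$ data via higher-order Jackson estimates.
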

\begin{proof}
Let $v(t):=u(\pi^{-1}t)$, for $t\in[-\pi,\pi]$.  Then $v\in\mathcal{C}^1[-\pi,\pi]$ with $v(-\pi)=v(\pi)$ and $\|u'\|_\infty=\pi\|v'\|_\infty$.  Invoking Theorem 1.3 of \cite{Jack},  there exists a trigonometric polynomial $q$ of degree $K$ such that 
\begin{equation}\label{vq}
\|v-q\|_\infty\le\frac{6\|v'\|_\infty}{K}.
\end{equation}
Suppose $p(x):=q(\pi x)$ for $x\in[-1,1]$. Then $p\in\mathcal{T}_K[-1,1]$ and  using (\ref{vq}) we obtain 
\begin{equation}\label{JT}
\|u-p\|_\infty=\|v-q\|_\infty\le\frac{6\|u'\|_\infty}{\pi K}.
\end{equation}
Assume that $|k|\le K$ is given.  So,  using Lemma \ref{TrigL} and (\ref{JT}), we achieve 
\begin{align*}
\left|\widehat{u}[k]-\widehat{u}[k;L]\right|
&\le|\widehat{u}[k]-\widehat{p}[k]|+\left|\widehat{p}[k;L]-\widehat{u}[k;L]\right|
\le2\|u-p\|_\infty\le\frac{12\|u'\|_\infty}{\pi K}.
\end{align*}
\end{proof}
\begin{remark}
Let $u\in\mathcal{C}^1[-1,1]$ with $u(-1)=u(1)$ and $L\in\mathbb{N}$. Then Proposition \ref{mainP} guarantees that $\widehat{u}[k;L]$ is an accurate approximation of $\widehat{u}[k]$ if $|k|\le\lceil\frac{L-1}{2}\rceil$.
\end{remark}
For a function $U\in L^1(\Omega) $ and $\mathbf{k}:=(k_1,k_2)^T\in\mathbb{Z}$, the Fourier coefficient  $\widehat{U}[\mathbf{k}]$ is given by 
\begin{equation}\label{Uk}
\widehat{U}[\mathbf{k}]:=\frac{1}{4}\int_{\Omega}U(x,y)e^{-\pi\ii(k_1x+k_2y)}\D x\D y.
\end{equation}
If $L\in\mathbb{N}$ then the finite Fourier coefficient $\widehat{U}[\mathbf{k};L]$ is given by 
\begin{equation}\label{FFC2}
\widehat{U}[\mathbf{k};L]:=\frac{1}{L^2}\sum_{i=1}^L\sum_{j=1}^LU(x_i,x_j)e^{-\pi\ii (k_1x_i+k_2x_j)},
\end{equation}
where $x_i=-1+(i-1)\delta$ with $\delta:=\frac{2}{L}$.

Next we prove the following absolute error bound for approximation of $\widehat{U}[\mathbf{k}]$ using $\widehat{U}[\mathbf{k};L]$.

\begin{proposition}\label{mainT}
{\it Let  $U\in\mathcal{C}^1(\Omega)$ be a periodic function,  $\mathbf{k}:=(k_1,k_2)^T\in\mathbb{Z}^2$ and $K\in\mathbb{N}$.  Then 
\[
\left|\widehat{U}[\mathbf{k}]-\widehat{U}[\mathbf{k};2K+1]\right|\le\frac{24\|\nabla U\|_\infty}{\pi K},\hspace{1.5cm}\ {\rm if}\ \|\mathbf{k}\|_\infty\le K.
\]
}\end{proposition}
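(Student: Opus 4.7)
The natural plan is to reduce the two-dimensional statement to the one-dimensional Proposition~\ref{mainP} by a tensor/iterated-variable argument. Writing $L := 2K+1$ and using Fubini on the integral definition \eqref{Uk}, I factor the Fourier coefficient as
\[
\widehat{U}[\mathbf{k}] = \tfrac{1}{2}\!\int_{-1}^{1} v(y)\, e^{-\pi\ii k_2 y}\,\mathrm{d}y = \widehat{v}[k_2],
\qquad v(y) := \widehat{U(\cdot,y)}[k_1] = \tfrac{1}{2}\!\int_{-1}^{1} U(x,y)\, e^{-\pi\ii k_1 x}\,\mathrm{d}x,
\]
and analogously, by regrouping the double sum \eqref{FFC2},
\[
\widehat{U}[\mathbf{k};L] = \widehat{v_L}[k_2;L],
\qquad v_L(y) := \widehat{U(\cdot,y)}[k_1;L] = \tfrac{1}{L}\sum_{i=1}^{L} U(x_i,y)\, e^{-\pi\ii k_1 x_i}.
\]

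Next I would split the error with the triangle inequality
\[
\bigl|\widehat{U}[\mathbf{k}]-\widehat{U}[\mathbf{k};L]\bigr| \;\le\; \bigl|\widehat{v}[k_2]-\widehat{v_L}[k_2]\bigr| \;+\; \bigl|\widehat{v_L}[k_2]-\widehat{v_L}[k_2;L]\bigr|,
\]
and bound each piece via Proposition~\ref{mainP} applied in one direction at a time. For the first piece, the integral expression for $\widehat{v}[k_2]-\widehat{v_L}[k_2]$ is bounded by $\|v-v_L\|_\infty$, and for each fixed $y$ Proposition~\ref{mainP} applied to the slice $x\mapsto U(x,y)\in\mathcal{C}^1[-1,1]$ (periodic by assumption) gives
\[
|v(y)-v_L(y)| \le \tfrac{12\|\partial_x U\|_\infty}{\pi K} \le \tfrac{12\|\nabla U\|_\infty}{\pi K}.
\]
For the second piece, I observe that $v_L$ inherits $\mathcal{C}^1$-regularity and periodicity from $U$ (since it is a finite linear combination of the $\mathcal{C}^1$, periodic functions $y\mapsto U(x_i,y)$), and $\|v_L'\|_\infty\le\|\partial_y U\|_\infty\le\|\nabla U\|_\infty$; thus Proposition~\ref{mainP} applied to $v_L$ gives the same bound $\tfrac{12\|\nabla U\|_\infty}{\pi K}$. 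Summing the two pieces yields the claimed $\tfrac{24\|\nabla U\|_\infty}{\pi K}$.

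The only nontrivial checks are routine bookkeeping: confirming the normalization constants of the iterated 1D Fourier coefficients match \eqref{Uk}/\eqref{FFC2}, and verifying that $v_L$ satisfies the hypotheses of Proposition~\ref{mainP}. Neither is a real obstacle, so I expect no serious difficulty—the argument is simply a clean separation of variables combined with two applications of the already-established 1D estimate.
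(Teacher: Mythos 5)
Your proof is correct and is essentially the paper's argument: both introduce the same intermediate quantity (discrete in $x$, exact in $y$), split with the triangle inequality, and apply Proposition~\ref{mainP} once in each variable to get $\frac{12\|\partial_1 U\|_\infty}{\pi K}+\frac{12\|\partial_2 U\|_\infty}{\pi K}\le\frac{24\|\nabla U\|_\infty}{\pi K}$. The only (immaterial) difference is bookkeeping — the paper applies the 1D estimate to the aggregated function $u_{k_2}(x):=\widehat{U_x}[k_2]$ for the first piece and slice-wise for the second, whereas you do the mirror image.
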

\begin{proof}
Let $U_x(y):=U(x,y)$ and $u_{k_2}(x):=\widehat{U_x}[k_2]$ for $(x,y)\in\Omega$.  Then 
\begin{align*}
&\widehat{u_{k_2}}[k_1]
=\frac{1}{2}\int_{-1}^1u_{k_2}(x)e^{-\pi\ii k_1x}\D x=\frac{1}{2}\int_{-1}^1\widehat{U_x}[k_2]e^{-\pi\ii k_1x}\D x
\\&=\frac{1}{4}\int_{-1}^1\left(\int_{-1}^1U_x(y)e^{-\pi\ii k_2 y}\D y\right)e^{-\pi\ii k_1x}\D x
=\frac{1}{4}\int_{-1}^1\int_{-1}^1U(x,y)e^{-\pi\ii (k_1x+k_2 y)}\D x\D y=\widehat{U}[\mathbf{k}].
\end{align*}
Using Leibniz integral rule,  $u_{k_2}'(x)=\widehat{(\partial_1U)_x}[k_2]$,  
implying $|u_{k_2}'(x)|\le\|\partial_1U\|_\infty$. So $\|u_{k_2}'\|_\infty\le\|\partial_1U\|_\infty$.  
Suppose $L:=2K+1$ and $\|\mathbf{k}\|_\infty\le K$.  Then, using Proposition \ref{mainP},  we achieve 
\begin{align*}
\left|\widehat{U}[\mathbf{k}]-\widehat{U}[\mathbf{k};L]\right|
&\le\left|\widehat{u_{k_2}}[k_1]-\widehat{u_{k_2}}[k_1;L]\right|+\left|\widehat{u_{k_2}}[k_1;L]-\widehat{U}[\mathbf{k};L]\right|
\\&\le\frac{12\left\|u_{k_2}'\right\|_\infty}{\pi K}+\frac{1}{L}\sum_{i=1}^L\left|\widehat{U_{x_i}}[k_2]-\widehat{U_{x_i}}[k_2;L]\right|
\le\frac{12\|\partial_1U\|_\infty}{\pi K}+\frac{1}{L}\sum_{i=1}^L\frac{12\|U_{x_i}'\|_\infty}{\pi K}\\&\le\frac{12\|\partial_1U\|_\infty}{\pi K}+\frac{1}{L}\sum_{i=1}^L\frac{12\|\partial_2 U\|_\infty}{\pi K}
\le\frac{12\|\partial_1U\|_\infty}{\pi K}+\frac{12\|\partial_2 U\|_\infty}{\pi K}\le \frac{24\|\nabla U\|_\infty}{\pi K}.
\end{align*}
\end{proof}
\begin{remark}
Let $U\in\mathcal{C}^1(\Omega)$ be a periodic function and $L\in\mathbb{N}$. Then Proposition \ref{mainT} guarantees that $\widehat{U}[\mathbf{k};L]$ is an accurate approximation of the Fourier coefficient $\widehat{U}[\mathbf{k}]$ if $\|\mathbf{k}\|_\infty\le\lceil\frac{L-1}{2}\rceil$.
\end{remark}
\subsection{Asymptotic/symmetry analysis of $\mathbf{c}(\mathbf{k};m,n)$}\label{Apx2}
\begin{lemma}\label{cakmnSym}
{\it Let $m\in\mathbb{Z}$, and $\mathbf{k}=(k_1,k_2)\in\mathbb{Z}^2$. Then
\begin{enumerate}
\item $\mathbf{c}(\mathbf{k};-m,n)=\overline{\mathbf{c}(\mathbf{k};m,n)}$ if $m\ge 0$.
\item $\mathbf{c}(-\mathbf{k};m,n)=(-1)^m\mathbf{c}(\mathbf{k};m,n)$ if $m\ge 0$.
\end{enumerate}
}\end{lemma}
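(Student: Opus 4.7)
The plan is that both assertions should follow by direct unwinding of the piecewise definition \eqref{cakmn} combined with two standard identities: $J_{-m}(x)=(-1)^m J_m(x)$ for $m\in\mathbb{N}$, and the consequent equality of positive zeros $z_{-m,n}=z_{m,n}$. Since the right-hand side of \eqref{cakmn} is real-valued apart from the factors $\ii^{m}$ and $e^{-\ii m\Phi(\mathbf{k})}$, everything reduces to bookkeeping of signs and complex phases. No analytic estimate is needed.

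For part (1), fix $m\ge 0$. Writing out $\mathbf{c}(\mathbf{k};-m,n)$ using the $m<0$ branch of \eqref{cakmn}, I would substitute $z_{-m,n}=z_{m,n}$ and $J_{-m}(\pi\|\mathbf{k}\|_2)=(-1)^m J_{m}(\pi\|\mathbf{k}\|_2)$. The pre-factor $(-1)^{-m}$ and the newly appearing $(-1)^m$ cancel, leaving
\[
\mathbf{c}(\mathbf{k};-m,n)=\sqrt{\pi}(-1)^n\ii^{-m}z_{m,n}\frac{J_{m}(\pi\|\mathbf{k}\|_2)\,e^{\ii m\Phi(\mathbf{k})}}{2(\pi^2\|\mathbf{k}\|_2^2-z_{m,n}^2)}.
\]
Since $\ii^{-m}=\overline{\ii^m}$ and $e^{\ii m\Phi(\mathbf{k})}=\overline{e^{-\ii m\Phi(\mathbf{k})}}$, while all remaining factors are real, the right-hand side is exactly $\overline{\mathbf{c}(\mathbf{k};m,n)}$ as given by the $m\ge 0$ branch. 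The case $m=0$ is immediate since both sides are real and the two branches agree.

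For part (2), I would use only that $\|-\mathbf{k}\|_2=\|\mathbf{k}\|_2$, so $J_m(\pi\|-\mathbf{k}\|_2)=J_m(\pi\|\mathbf{k}\|_2)$ and the denominator is unchanged, while $\Phi(-\mathbf{k})\equiv \Phi(\mathbf{k})+\pi\pmod{2\pi}$. The only affected factor is the angular phase, which transforms as $e^{-\ii m\Phi(-\mathbf{k})}=e^{-\ii m\pi}e^{-\ii m\Phi(\mathbf{k})}=(-1)^m e^{-\ii m\Phi(\mathbf{k})}$. Multiplying through gives the claimed factor $(-1)^m$.

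The only subtlety I expect is handling the degenerate point $\mathbf{k}=\mathbf{0}$, where $\Phi$ is not canonically defined. However this is harmless: for $m\ge 1$ one has $J_m(0)=0$, so $\mathbf{c}(\mathbf{0};m,n)=\mathbf{c}(\mathbf{0};-m,n)=0$ and both identities hold trivially, while for $m=0$ the phase factor is identically $1$ and the identities again reduce to tautologies. A brief remark to this effect suffices; the rest of the proof is pure algebraic verification.
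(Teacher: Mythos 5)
Your proposal is correct and follows essentially the same route as the paper's proof: part (1) by expanding the $m<0$ branch of (\ref{cakmn}) with $J_{-m}=(-1)^mJ_m$ and $z_{-m,n}=z_{m,n}$ so that the sign prefactors cancel and only the conjugated phases $\ii^{-m}$ and $e^{\ii m\Phi(\mathbf{k})}$ remain, and part (2) via $\|-\mathbf{k}\|_2=\|\mathbf{k}\|_2$ together with $e^{-\ii m\Phi(-\mathbf{k})}=(-1)^me^{-\ii m\Phi(\mathbf{k})}$. Your extra remark on the degenerate point $\mathbf{k}=\mathbf{0}$ is a harmless (and correct) addition that the paper omits.
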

\begin{proof}
(1) Since $-m\le 0$,  using (\ref{cakmn}), we achieve 
\begin{align*}
\mathbf{c}(\mathbf{k};-m,n)
&=(-1)^{-m}\sqrt{\pi}(-1)^{n}\ii^{-m}z_{-m,n}\frac{J_{-m}(\pi\|\mathbf{k}\|_2)e^{\ii m\Phi(\mathbf{k})}}{2(\pi^2\|\mathbf{k}\|_2^2-z_{-m,n}^2)}
\\&=\sqrt{\pi}(-1)^{n}\ii^{-m}z_{m,n}\frac{J_{m}(\pi\|\mathbf{k}\|_2)e^{\ii m\Phi(\mathbf{k})}}{2(\pi^2\|\mathbf{k}\|^2_2-z_{m,n}^2)}=\overline{\mathbf{c}(\mathbf{k};m,n)}.
\end{align*}
(2) We have 
\begin{align*}
\mathbf{c}(-\mathbf{k};m,n)
&=\sqrt{\pi}(-1)^{n}\ii^{m}z_{m,n}\frac{J_{m}(\pi\|-\mathbf{k}\|_2)e^{-\ii m\Phi(-\mathbf{k})}}{2(\pi^2\|-\mathbf{k}\|_2^2-z_{m,n}^2)}
\\&=\sqrt{\pi}(-1)^{n}\ii^{m}z_{m,n}\frac{J_{m}(\pi\|\mathbf{k}\|_2)(-1)^me^{-\ii m\Phi(\mathbf{k})}}{2(\pi^2\|\mathbf{k}\|_2^2-z_{m,n}^2)}=(-1)^m\mathbf{c}(\mathbf{k};m,n).
\end{align*}
\end{proof}
For every $(m,n)\in\mathbb{Z}\times\mathbb{N}$, let $K_{m,n}:=\lceil\frac{z_{m,n}}{\pi}\rceil$.
\begin{lemma}\label{OmainKmn}
Let $(m,n)\in\mathbb{Z}\times\mathbb{N}$ and $t\ge K_{m,n}$. 
Then 
\[
\frac{1}{\pi^2t^2-z_{m,n}^2}\le\frac{K_{m,n}^2}{\pi^2K_{m,n}^2-z_{m,n}^2}\frac{1}{t^2}.
\]
\end{lemma}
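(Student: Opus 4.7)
The plan is to prove the inequality by elementary cross-multiplication after verifying that both denominators are strictly positive. First, I would note that, since $K_{m,n}=\lceil z_{m,n}/\pi\rceil$ and the positive zeros of Bessel functions of the first kind are irrational, we have $K_{m,n}>z_{m,n}/\pi$ strictly, hence $\pi^{2}K_{m,n}^{2}-z_{m,n}^{2}>0$. Combined with the hypothesis $t\ge K_{m,n}$, this also yields $\pi^{2}t^{2}-z_{m,n}^{2}\ge\pi^{2}K_{m,n}^{2}-z_{m,n}^{2}>0$, so both denominators in the claimed inequality are positive and the division makes sense.

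Next, I would clear denominators. Since $t^{2}>0$ and both denominators are positive, the claimed inequality
\[
\frac{1}{\pi^{2}t^{2}-z_{m,n}^{2}}\le\frac{K_{m,n}^{2}}{\pi^{2}K_{m,n}^{2}-z_{m,n}^{2}}\cdot\frac{1}{t^{2}}
\]
is equivalent to
\[
t^{2}\bigl(\pi^{2}K_{m,n}^{2}-z_{m,n}^{2}\bigr)\le K_{m,n}^{2}\bigl(\pi^{2}t^{2}-z_{m,n}^{2}\bigr).
\]
Expanding both sides, the common term $\pi^{2}K_{m,n}^{2}t^{2}$ cancels, and the inequality reduces to
\[
-t^{2}z_{m,n}^{2}\le -K_{m,n}^{2}z_{m,n}^{2},
\]
i.e.\ $K_{m,n}^{2}z_{m,n}^{2}\le t^{2}z_{m,n}^{2}$, which, dividing by $z_{m,n}^{2}>0$, is equivalent to $K_{m,n}\le t$. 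This is precisely the assumption on $t$, so tracing the equivalences backward completes the argument.

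There is no substantive obstacle here: the statement is an elementary monotonicity lemma about the function $t\mapsto 1/(\pi^{2}t^{2}-z_{m,n}^{2})$ on $t\ge K_{m,n}$, packaged in a form convenient for the application in the proof of Theorem \ref{CmnKerr}. The only point that requires a moment of care is the strict positivity of $\pi^{2}K_{m,n}^{2}-z_{m,n}^{2}$, which is ensured by the irrationality of $z_{m,n}/\pi$ (equivalently, by the strict inequality built into the ceiling function in the non-degenerate case).
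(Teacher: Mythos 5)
Your proof is correct and follows essentially the same route as the paper, which applies the elementary fact that $\frac{1}{x-B}\le\frac{A}{A-B}\cdot\frac{1}{x}$ for $x\ge A>B>0$ with $x=\pi^2t^2$, $A=\pi^2K_{m,n}^2$, $B=z_{m,n}^2$ --- exactly the cross-multiplication you carry out. One small caveat: the strict positivity $\pi^2K_{m,n}^2>z_{m,n}^2$ requires $z_{m,n}/\pi\notin\mathbb{Z}$ rather than irrationality of $z_{m,n}$ itself (an irrational $z_{m,n}$ could in principle equal $k\pi$), a point the paper asserts without comment.
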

\begin{proof}
Let $t\ge K_{m,n}$.  Then $\pi^2t^2\ge \pi^2K_{m,n}^2$.
Hence,  $\pi^2t^2\ge \pi^2K_{m,n}^2>z_{m,n}^2$.  Therefore, we achieve  \footnote{If $x\ge A>B>0$ then $\frac{1}{x-B}\le \frac{A}{A-B}\frac{1}{x}$.}
\begin{equation*}
\frac{1}{\pi^2t^2-z_{m,n}^2}\le\frac{\pi^2K_{m,n}^2}{\pi^2K_{m,n}^2-z_{m,n}^2}\frac{1}{\pi^2t^2}=\frac{K_{m,n}^2}{\pi^2K_{m,n}^2-z_{m,n}^2}\frac{1}{t^2}.
\end{equation*}
\end{proof}
\begin{lemma}\label{SumJmk}
{\it Let $(m,n)\in\mathbb{Z}\times\mathbb{N}$.  Then 
\[
\sum_{\mathbf{k}\in\mathbb{Z}^2}|\mathbf{c}(\mathbf{k};m,n)|<\infty.
\]
}\end{lemma}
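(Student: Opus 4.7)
\medskip

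\noindent\textbf{Proof proposal.} Write $|\mathbf{c}(\mathbf{k};m,n)| = \tfrac{\sqrt{\pi}\, z_{m,n}}{2}\cdot\dfrac{|J_m(\pi\|\mathbf{k}\|_2)|}{|\pi^2\|\mathbf{k}\|_2^2-z_{m,n}^2|}$ (this is valid for any $m\in\mathbb{Z}$ since $J_{-m}=(-1)^m J_m$ and $z_{-m,n}=z_{m,n}$). The plan is to split the lattice sum into the finite shell $\|\mathbf{k}\|_\infty\le K_{m,n}$ and the tail $\|\mathbf{k}\|_\infty> K_{m,n}$, then show the tail decays like $\|\mathbf{k}\|_2^{-5/2}$, which is summable on $\mathbb{Z}^2$.

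For the finite part, there are only finitely many lattice points with $\|\mathbf{k}\|_\infty\le K_{m,n}$; each contributes a finite quantity, the only potential issue being vanishing of the denominator. Since $\|\mathbf{k}\|_2^2=k_1^2+k_2^2\in\mathbb{Z}_{\ge 0}$ while $z_{m,n}^2/\pi^2$ is not a non-negative integer, we have $\pi^2\|\mathbf{k}\|_2^2\neq z_{m,n}^2$ on the lattice, so each term is well defined and the inner sum is a finite number.

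For the tail, first invoke Lemma~\ref{OmainKmn} with $t=\|\mathbf{k}\|_2\ge\|\mathbf{k}\|_\infty>K_{m,n}$ to obtain
\[
\frac{1}{\pi^2\|\mathbf{k}\|_2^2-z_{m,n}^2}\le \frac{K_{m,n}^2}{\pi^2K_{m,n}^2-z_{m,n}^2}\cdot\frac{1}{\|\mathbf{k}\|_2^2}.
\]
Next apply the standard uniform Bessel bound $|J_m(x)|\le C_m\, x^{-1/2}$ for $x\ge 1$ (a direct consequence of the classical asymptotic $J_m(x)=\sqrt{2/(\pi x)}\cos(x-m\pi/2-\pi/4)+O(x^{-3/2})$, together with $|J_m|\le 1$ on any bounded interval). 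Combining these,
\[
|\mathbf{c}(\mathbf{k};m,n)|\le \frac{A_{m,n}}{\|\mathbf{k}\|_2^{5/2}},\qquad \|\mathbf{k}\|_\infty>K_{m,n},
\]
for some constant $A_{m,n}>0$ depending only on $m,n$.

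Finally, using the elementary lattice count $\#\{\mathbf{k}\in\mathbb{Z}^2:\|\mathbf{k}\|_\infty=N\}=8N$ for $N\ge 1$ and the inequality $\|\mathbf{k}\|_2\ge\|\mathbf{k}\|_\infty$,
\[
\sum_{\|\mathbf{k}\|_\infty>K_{m,n}}|\mathbf{c}(\mathbf{k};m,n)|\le A_{m,n}\sum_{N>K_{m,n}}8N\cdot N^{-5/2}= 8A_{m,n}\sum_{N>K_{m,n}}N^{-3/2}<\infty,
\]
which combined with the finite head sum gives the claim. The only nontrivial ingredient I expect to invoke (beyond what is already proved in the appendix) is the $x^{-1/2}$ decay of $|J_m(x)|$; everything else is Lemma~\ref{OmainKmn} plus a lattice-counting estimate.
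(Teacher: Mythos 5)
Your proof is correct and follows essentially the same route as the paper's: split the lattice sum at a shell of radius comparable to $K_{m,n}$ (the paper uses $q_{m,n}=\max\{K_{m,n},D_m\}$), apply Lemma~\ref{OmainKmn} with $t=\|\mathbf{k}\|_2$, and use the $x^{-1/2}$ decay of $J_m$ to bound the tail by a convergent sum of $\|\mathbf{k}\|_2^{-5/2}$. The only differences are cosmetic: you sum the tail by elementary shell counting ($8N$ points with $\|\mathbf{k}\|_\infty=N$) where the paper cites the convergence of the Eisenstein zeta value $\zeta_2(5/4)$ from Sugiura, and your unproved aside that $z_{m,n}^2/\pi^2$ is never a non-negative integer is not actually needed, since the head is in any case a finite sum of the finite quantities $\mathbf{c}(\mathbf{k};m,n)$.
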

\begin{proof}
Suppose that $\|\mathbf{k}\|_2>K_{m,n}$.  Then Lemma \ref{OmainKmn},  for $t:=\|\mathbf{k}\|_2$,  concludes that 
\begin{equation}\label{Jm2mn}
\frac{|J_{m}(\pi\|\mathbf{k}\|_2)|}{|\pi^2\|\mathbf{k}\|^2_2-z_{m,n}^2|}=\frac{|J_{m}(\pi\|\mathbf{k}\|_2)|}{\pi^2\|\mathbf{k}\|^2_2-z_{m,n}^2}
\le\frac{K_{m,n}^2}{\pi^2K_{m,n}^2-z_{m,n}^2}\frac{|J_{m}(\pi\|\mathbf{k}\|_2)|}{\|\mathbf{k}\|_2^2}.
\end{equation}
Assume $\alpha,D_m>0$ such that $|J_m(x)|\le \alpha|x|^{-1/2}$ as $|x|>D_m$. Let $q_{m,n}:=\max\{K_{m,n},D_m\}$. If $\|\mathbf{k}\|_\infty>q_{m,n}$ then $\|\mathbf{k}\|_2\ge\|\mathbf{k}\|_\infty>q_{m,n}$. Hence,  $\|\mathbf{k}\|_2> D_m$ which gives
\begin{equation}\label{Jm2}
\frac{|J_{m}(\pi\|\mathbf{k}\|_2)|}{\|\mathbf{k}\|_2^2}\le\frac{\alpha\sqrt{\pi}}{\|\mathbf{k}\|_2^{5/2}}.
\end{equation}
Then using (\ref{Jm2mn}),  (\ref{Jm2}), and Lemma 1 of \cite[Page 96]{M.Sugiu} we achieve 
\begin{align*}
\sum_{\|\mathbf{k}\|_\infty>q_{m,n}}\frac{|J_{m}(\pi\|\mathbf{k}\|_2)|}{\|\mathbf{k}\|_2^2}&\le\alpha\sqrt{\pi}\sum_{\|\mathbf{k}\|_\infty>q_{m,n}}\frac{1}{\|\mathbf{k}\|_2^{5/2}}\le\alpha\sqrt{\pi}\sum_{\mathbf{k}\in\mathbb{L}^2}\frac{1}{\|\mathbf{k}\|_2^{5/2}}= \alpha\sqrt{\pi}\zeta_2(5/4),
\end{align*}
where $\zeta_2$ is Eisenstein's zeta function.  This implies that 
\begin{align*}
\sum_{\|\mathbf{k}\|_\infty>q_{m,n}}|\mathbf{c}(\mathbf{k};m,n)|
&=\frac{\sqrt{\pi}z_{m,n}}{2}\sum_{\|\mathbf{k}\|_\infty>q_{m,n}}\frac{|J_{m}(\pi\|\mathbf{k}\|_2)|}{|\pi^2\|\mathbf{k}\|^2_2-z_{m,n}^2|}
\\&\le\frac{\sqrt{\pi}z_{m,n}}{2}\frac{K_{m,n}^2}{\pi^2K_{m,n}^2-z_{m,n}^2}\sum_{\|\mathbf{k}\|_\infty>q_{m,n}}\frac{|J_{m}(\pi\|\mathbf{k}\|_2)|}{\|\mathbf{k}\|_2^2}
\le\frac{\pi\alpha z_{m,n}K_{m,n}^2}{2(\pi^2K_{m,n}^2-z_{m,n}^2)}\zeta_2(5/4).
\end{align*}
Therefore, we obtain 
\begin{align*}
\sum_{\mathbf{k}\in\mathbb{Z}^2}|\mathbf{c}(\mathbf{k};m,n)|
&=\sum_{\|\mathbf{k}\|_\infty\le q_{m,n}}|\mathbf{c}(\mathbf{k};m,n)|+\sum_{\|\mathbf{k}\|_\infty>q_{m,n}}|\mathbf{c}(\mathbf{k};m,n)|
\\&\le\sum_{\|\mathbf{k}\|_2\le q_{m,n}}|\mathbf{c}(\mathbf{k};m,n)|+\frac{\pi\alpha z_{m,n}K_{m,n}^2}{2(\pi^2K_{m,n}^2-z_{m,n}^2)}\zeta_2(5/4)<\infty.
\end{align*}
\end{proof}

\bibliographystyle{amsplain}

\end{document}